\newcommand{\imod}[1]{\allowbreak\mkern4mu({\operator@font mod}\,\,#1)}
\renewcommand{\a}{\alpha}
\renewcommand{\b}{\beta}
 \newcommand{\e}{\epsilon}
 \renewcommand{\O}{\Omega}
 \renewcommand{\to}{\rightarrow}
 \newcommand{\C}{\mathcal{C}}
\newcommand{\leqs}{\leqslant}
\newcommand{\geqs}{\geqslant}
\newcommand{\la}{\langle}
\newcommand{\ra}{\rangle}
\newtheorem{con}[thm]{Conjecture} 
\begin{document}

\maketitle{SIMPLE GROUPS, GENERATION AND \\ PROBABILISTIC METHODS}{%
  TIMOTHY C. BURNESS$^{\ast}$}{%
  $^{\ast}$School of Mathematics, University of Bristol, Bristol BS8 1TW, UK \newline
   Email: t.burness@bristol.ac.uk}
    
\begin{abstract}
It is well known that every finite simple group can be generated by two elements and this leads to a wide range of problems that have been the focus of intensive research in recent years. In this survey article we discuss some of the extraordinary generation properties of simple groups, focussing on topics such as random generation, $(a,b)$-generation and spread, as well as highlighting the application of probabilistic methods in the proofs of many of the main results.  
We also present some recent work on the minimal generation of maximal and second maximal subgroups of simple groups, which has applications to the study of subgroup growth and the generation of primitive permutation groups.
\end{abstract}

\section{Introduction}\label{s:intro}

In this survey article we will discuss some of the remarkable generation properties of finite simple groups. Our starting point is the fact that every finite simple group can be generated by just two of its elements (this is essentially a theorem of Steinberg \cite{St}, and the proof requires the Classification of Finite Simple Groups). This leads naturally to a wide range of interesting questions concerning the abundance of generating pairs and their distribution across the group, which have been intensively studied in recent years. Our goal in Sections \ref{s:simple} and \ref{s:spread} is to survey some of the main results and open problems.

Another one of our aims is to highlight the central role played by probabilistic methods. 
In some instances, the given result may already be stated in probabilistic terms (for example, it may refer to the probability that two randomly chosen elements in a group form a generating pair). However, we will see that probabilistic techniques have also been used in an essential way to prove entirely deterministic statements. A striking example is given by Guralnick and Kantor's proof of the so-called \emph{$\frac{3}{2}$-generation} property for simple groups, which we will discuss in Section \ref{s:spread}, together with some far-reaching generalisations.

Our understanding of the subgroups of finite simple groups has advanced greatly in recent years. In particular, many results on the generation of simple groups rely on powerful subgroup structure theorems such as the O'Nan--Scott theorem for alternating groups and Aschbacher's theorem for classical groups. In a different direction, it is natural to consider the generation properties of the subgroups themselves, such as maximal and second maximal subgroups that are located at the top of the subgroup lattice. Indeed, we can seek to understand the extent to which some of the remarkable results for simple groups extend to these subgroups (with suitable modifications, if necessary). The study of problems of this nature was recently initiated through joint work with Liebeck and Shalev and we will discuss some of the main results in Section \ref{s:sub}.

\spc

The coverage of this article is based on the content of my one-hour lecture at the \emph{Groups St Andrews} conference, which was hosted by the University of Birmingham in August 2017. It is a pleasure to thank the organisers for inviting me to give this lecture and for planning and delivering  a very interesting and inspiring meeting. There is a vast literature on the generation of simple groups and so I have had to be very selective in choosing the main topics for this article, which is rather biased towards my own tastes and interests. There are many other excellent survey articles on related topics, which an interested reader may wish to consult. For example, Shalev has written several interesting articles on the use of probabilistic methods in finite group theory, which discuss applications to generation problems and much more (see \cite{shalev}, for example). Liebeck's survey article \cite{Lie} on probabilistic group theory provides an excellent account of some of the more recent developments. 

\spc

Let us say a few words on the notation used in this article. In general, our notation is all fairly standard (and new notation will be defined when needed). It might be helpful to point out that we use the notation of \cite{KL} for simple groups. For example, we will write ${\rm L}_{n}(q) = {\rm PSL}_{n}(q)$ and ${\rm U}_{n}(q) = {\rm PSU}_{n}(q)$ for linear and unitary groups, and we use ${\rm P\O}_{n}^{+}(q)$, etc., for simple orthogonal groups (this differs from the notation used in the Atlas \cite{ATLAS}).

\spc

Finally, I would like to thank Scott Harper for helpful comments on an earlier version of this article.

\section{Generation properties of simple groups}\label{s:simple}

Let $G$ be a finite group and let 
\[
d(G) = {\rm min}\{|S|\,:\, G = \la S \ra \}
\]
be the minimal number of generators for $G$. We will say that $G$ is \emph{$n$-generated} if $d(G)$ is at most $n$. In this section we will focus on the generation properties of simple groups, which is an area of research with a long and rich history. Here the most well-known result is the fact that every finite simple group can be generated by two elements.

\begin{thm}\label{t:sd2}
Every finite simple group is $2$-generated.
\end{thm}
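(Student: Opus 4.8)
The plan is to prove this theorem by invoking the Classification of Finite Simple Groups (CFSG) and verifying the 2-generation property for each family in the resulting list. The overall strategy is a case division: the cyclic groups of prime order, the alternating groups, the finite simple groups of Lie type, and the 26 sporadic groups. The text already signals that the proof "requires the Classification," so I would not attempt a uniform or classification-free argument; instead the goal is to exhibit, or establish the existence of, a generating pair in each case.

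First I would dispose of the easy cases. A cyclic group $\Z/p\Z$ of prime order is evidently $1$-generated, hence $2$-generated. For the alternating group $A_n$ with $n \geqs 5$, I would write down explicit generators: for instance the $3$-cycle $(1\,2\,3)$ together with an $n$-cycle (when $n$ is odd) or an $(n-1)$-cycle, chosen so that the two elements generate a transitive, primitive subgroup containing a $3$-cycle, which by a classical argument must be all of $A_n$. The sporadic groups are a finite list of $26$ groups, and for each one a generating pair is known explicitly (indeed the \emph{Atlas} \cite{ATLAS} records standard generators), so this case is settled by a finite check.

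\textbf{The main obstacle} will be the groups of Lie type, which constitute the bulk of the work and the historical heart of the theorem. Here the cleanest approach follows Steinberg \cite{St}: realise each such group as the fixed points (or derived subgroup of the fixed points) of a Frobenius-type endomorphism acting on a simple algebraic group, and produce two explicit generators adapted to a maximal torus and the root subgroups. Steinberg's insight is that one generator can be taken from a suitable maximal torus (or a regular semisimple element) and the other built from the Weyl group and root data, so that together they sweep out all the root subgroups and the torus, which generate the whole group. I would carry this out family by family (linear, unitary, symplectic, orthogonal, exceptional), taking care over the small-rank and small-field exceptions where the generic construction can fail and must be replaced by an ad hoc generating pair.

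Finally, I would remark that although the statement is purely deterministic, a conceptually different and in some ways more robust route is available through the \emph{probabilistic} methods emphasised throughout this survey. Rather than exhibiting explicit generators, one shows that the proportion of pairs $(x,y) \in G \times G$ with $\la x, y\ra = G$ tends to $1$ as $|G| \to \infty$; since this proportion is positive, a generating pair must exist. This strategy, developed in work going back to Dixon and extended to all simple groups, sidesteps the delicate case-by-case constructions for the generic groups and reduces the problem to estimating fixed-point ratios and maximal subgroup counts. I would use this as a cross-check and as motivation for the probabilistic themes of the sections to follow, while relying on the explicit Steinberg-type construction to handle the finitely many genuinely exceptional cases.
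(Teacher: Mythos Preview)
Your proposal is correct and follows essentially the same route as the paper: invoke the Classification, handle the alternating groups with the $3$-cycle together with an $n$- or $(n-1)$-cycle, appeal to Steinberg \cite{St} for explicit generating pairs in the groups of Lie type, and dispose of the sporadic groups by reference. The only minor discrepancy is that the paper attributes the sporadic case to Aschbacher and Guralnick \cite{AG} rather than to the Atlas, and your closing probabilistic remark anticipates exactly the discussion the paper takes up next around Dixon's conjecture.
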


The proof relies on the Classification of Finite Simple Groups. First observe that the alternating groups are straightforward. For example, it is an easy exercise to show that 
$$A_n = \left\{\begin{array}{ll}\langle (1,2,3), (1,2, \ldots, n)\rangle & \mbox{$n$ odd} \\
 \langle (1,2,3), (2,3, \ldots, n)\rangle & \mbox{$n$ even}
 \end{array}\right.$$
In \cite{St}, Steinberg presents explicit generating pairs for each simple group of Lie type. For instance, ${\rm L}_{2}(q) = \la xZ,yZ \ra$, where $Z=Z({\rm SL}_{2}(q))$ and
$$x=\left(\begin{array}{ll}
\a & 0 \\
0 & \a^{-1}
\end{array}\right),\;\; y=\left(\begin{array}{ll}
-1 & 1 \\
-1 & 0
\end{array}\right)$$
with $\mathbb{F}_{q}^{\times}=\la \a \ra$. In \cite{AG}, Aschbacher and Guralnick complete the proof of the theorem by showing that every sporadic group is $2$-generated. 

\spc

In view of Theorem \ref{t:sd2}, there are many natural extensions and variations to consider. For example:
\begin{itemize}
\item[1.] \emph{How abundant are generating pairs in a finite simple group?}
\item[2.] \emph{Can we find generating pairs of prescribed orders?}
\item[3.] \emph{Does every non-identity element belong to a generating pair?}
\end{itemize}
As we shall see, problems of this flavour have been the focus of intensive research in recent years, with probabilistic methods playing a central role in the proofs of many of the main results. 

\subsection{Random generation}\label{s:random}

Let $G$ be a finite group, let $k$ be a positive integer and let
$$\mathbb{P}_k(G) = \frac{|\{(x_1, \ldots, x_k) \in G^k \, :\, G = \la x_1, \ldots, x_k \ra\}|}{|G|^k}$$
be the probability that $k$ randomly chosen elements generate $G$.  For a simple group $G$, Theorem \ref{t:sd2} implies that $\mathbb{P}_{2}(G)>0$ and it is natural to consider the asymptotic behaviour of $\mathbb{P}_{2}(G)$ with respect to $|G|$. This is an old problem, which can be traced all the way back to a conjecture of Netto in 1882. In \cite{Netto}, Netto  writes

\begin{quote}
\emph{``If we arbitrarily select two or more substitutions of $n$ elements, it is to be regarded as extremely probable that the group of lowest order which contains these is the symmetric group, or at least the alternating group."}
\end{quote} 

In other words, Netto is predicting that $\mathbb{P}_2(A_n) \to 1$ as $n$ tends to infinity. This conjecture was proved by Dixon \cite{Dixon} in a highly influential paper published in 1969, which relies in part on the pioneering work of Erd\H{o}s and Tur\'{a}n \cite{ET} on statistical properties of symmetric groups. In the same paper, Dixon makes the bold conjecture that \emph{all} finite simple groups are strongly $2$-generated in the sense of Netto.

\begin{con}[Dixon, 1969]\label{c:dixon} 
\emph{Let $(G_n)$ be any sequence of finite simple groups such that $|G_n|$ tends to infinity with $n$. Then $\mathbb{P}_2(G_n) \to 1$ as $n \to \infty$.}
\end{con}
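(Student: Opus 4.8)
The plan is to control the complementary probability $1 - \mathbb{P}_2(G_n)$ and show that it tends to $0$. If a pair $(x,y) \in G \times G$ fails to generate $G$, then $\la x, y \ra$ is contained in some maximal subgroup $M < G$, and in particular both $x$ and $y$ lie in $M$. I would therefore take a union bound over all maximal subgroups. Since $G$ is simple, every maximal subgroup $M$ is self-normalising (as $N_G(M) \in \{M, G\}$ and $N_G(M) = G$ would force $M \normal G$), so the $G$-class of $M$ has exactly $|G:M|$ members, each contributing $(|M|/|G|)^2 = |G:M|^{-2}$. Collecting terms by conjugacy class gives
\[
1 - \mathbb{P}_2(G) \leqslant \sum_{M}\left(\frac{|M|}{|G|}\right)^{2} = \sum_{M \in \mathcal{M}(G)} |G:M|^{-1},
\]
where the middle sum runs over all maximal subgroups and $\mathcal{M}(G)$ is a set of representatives of their conjugacy classes. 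It thus suffices to prove that $Q(G) := \sum_{M \in \mathcal{M}(G)} |G:M|^{-1} \to 0$ as $|G| \to \infty$ through finite simple groups, and the real work lies here. The argument proceeds through the Classification, treating each family of simple groups in turn.

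For the alternating groups $G = A_n$ (Dixon's original case \cite{Dixon}) the maximal subgroups split, via the O'Nan--Scott theorem, into intransitive, imprimitive and primitive types. The intransitive subgroups $(S_k \times S_{n-k}) \cap A_n$ have index $\binom{n}{k}$, so their contribution is $\sum_k \binom{n}{k}^{-1} = O(1/n)$, and the imprimitive subgroups have much larger index and contribute even less. The delicate terms come from the primitive maximal subgroups $M \neq A_n$, where I would need both a good \emph{lower} bound on $|G:M|$ (equivalently an upper bound on the order of a primitive group other than $A_n$ and $S_n$) and a bound on the \emph{number} of such classes. These are exactly the ingredients supplied by combining classical bounds on the order of a primitive permutation group with the statistical results of Erd\H{o}s and Tur\'{a}n \cite{ET} on element orders in $S_n$, which together force a random pair of permutations to generate a group containing $A_n$ with probability tending to $1$.

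For the groups of Lie type the same reduction applies, with Aschbacher's theorem (classical groups, due to Kantor and Lubotzky) and the Liebeck--Seitz classification of maximal subgroups (exceptional groups, due to Liebeck and Shalev) playing the role of O'Nan--Scott. I would partition $\mathcal{M}(G)$ into the geometric subgroups --- parabolic and other reducible subgroups, whose indices are powers of $q$ growing with the rank and whose number of classes grows only polynomially in the rank and $\log q$ --- and the remaining irreducible almost simple subgroups lying in Aschbacher's class $\ms$. The sum $Q(G)$ is then dominated by the terms of smallest index, typically the stabilisers of a point or a hyperplane, whose reciprocal index is $\sim q^{-(n-1)}$ for $\mathrm{L}_n(q)$ and still tends to $0$; since only polynomially many classes arise, the whole of $Q(G)$ vanishes in the limit.

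The main obstacle, in every case, is the uniform control of the irreducible (respectively primitive) maximal subgroups in class $\ms$: one must bound both their number and their order well enough that their total contribution to $Q(G)$ tends to $0$, uniformly as $q \to \infty$ and as the rank grows. For alternating groups this is precisely the statement that proper primitive groups are "small", while for classical and exceptional groups it rests on representation-theoretic bounds and the detailed structure theory underpinning Aschbacher's theorem and the Liebeck--Seitz work. I expect this to be the crux: the parabolic and geometric contributions are comparatively elementary once their indices are written down, whereas ruling out an accumulation of the sporadic-looking irreducible subgroups requires the full strength of the subgroup structure theorems.
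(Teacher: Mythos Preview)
Your proposal is correct and follows essentially the same approach sketched in the paper: bound $1 - \mathbb{P}_2(G)$ above by the maximal-subgroup zeta function $\zeta_G(2) = \sum_{H \in \mathcal{M}} |G:H|^{-2}$ and then invoke the subgroup structure theorems (O'Nan--Scott for alternating groups, Aschbacher's theorem for classical groups, and the analogous results for exceptional groups) to show this quantity tends to zero, with the attributions to Dixon, Kantor--Lubotzky and Liebeck--Shalev matching the paper's account. Your rewriting of $\zeta_G(2)$ as $\sum_{[M]} |G:M|^{-1}$ over conjugacy-class representatives, via the observation that maximal subgroups of a simple group are self-normalising, is a harmless reformulation that the paper does not make explicit but which is equivalent to its $\zeta_G(2)$ bound.
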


The proof of Dixon's conjecture was eventually completed in the 1990s. In \cite{KanL}, Kantor and Lubotzky establish the result for classical groups and low rank exceptional groups, and the remaining groups of Lie type were handled by Liebeck and Shalev \cite{LieSh}. The proof is based on the following elementary observations. 

Let $G$ be a finite group and let $\mathcal{M}$ be the set of maximal subgroups of $G$. Set 
\begin{equation}\label{e:zeta}
\zeta_G(s) = \sum_{H \in \mathcal{M}}|G:H|^{-s}
\end{equation}
for a real number $s>0$. For randomly chosen elements $x,y \in G$, observe that $G \neq \langle x,y \rangle$ if and only if $x,y \in H$ for some $H \in \mathcal{M}$. Since $|G:H|^{-2}$ is the  probability that these random  elements are both contained in $H$, it follows that 
$$1-\mathbb{P}_2(G) \leqs \sum_{H \in \mathcal{M}}{|G:H|^{-2}}= \zeta_G(2).$$
Now assume $G$ is a finite simple group of Lie type. By carefully studying $\mathcal{M}$, using powerful results on the subgroup structure of these groups, such as Aschbacher's theorem \cite{asch} for classical groups, one can show that $\zeta_G(2) \to 0$ as $|G|$ tends to infinity. Therefore $\mathbb{P}_2(G) \to 1$ and Dixon's conjecture follows.

It is interesting to note that this probabilistic argument shows that every sufficiently large finite simple group of Lie type is $2$-generated, without the need to explicitly construct a pair of generators. Numerous extensions have since been established. For example, the following striking result is \cite[Theorem 1.1]{MQR}.  

\begin{thm}
We have $\mathbb{P}_2(G) \geqs 53/90$ for every non-abelian finite simple group $G$, with equality if and only if $G=A_6$.
\end{thm}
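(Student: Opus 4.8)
The plan is to build on the elementary union bound established above, namely $\mathbb{P}_2(G) \geqs 1 - \zeta_G(2)$ with $\zeta_G(2) = \sum_{H \in \mathcal{M}}|G:H|^{-2}$. Since $53/90 = 1 - 37/90$, it suffices to prove that $\zeta_G(2) \leqs 37/90$ for every non-abelian finite simple group $G$ outside a short explicit list of small exceptions, and then to treat each exceptional group (together with $A_6$ itself) by an exact computation of $\mathbb{P}_2(G)$. I would organise the argument according to the Classification: the alternating groups, the classical groups, the exceptional groups of Lie type, and the $26$ sporadic groups.

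For the alternating groups $A_n$ the dominant contribution to $\zeta_{A_n}(2)$ comes from the point stabiliser $A_{n-1}$, which has index $n$ and contributes $n\cdot n^{-2}=1/n$; using the O'Nan--Scott theorem to control the remaining intransitive, imprimitive and primitive maximal subgroups, one shows that $\zeta_{A_n}(2) \to 0$ and in fact falls below $37/90$ for all $n \geqs 7$, leaving only $A_5$ and $A_6$ for direct treatment. For the groups of Lie type I would invoke Aschbacher's theorem for the classical groups, and the analogous maximal subgroup theorems for the exceptional groups, to bound the index and the number of conjugates of the maximal subgroups in each geometric and almost simple class; the minimal-index parabolic subgroups govern the leading behaviour, and their indices grow with both the rank and the field size $q$, so $\zeta_G(2)\leqs 37/90$ holds outside a finite collection of small-rank groups over small fields. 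For the sporadic groups the maximal subgroups are recorded in the Atlas, so $\zeta_G(2)$ may simply be evaluated and checked directly.

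It then remains to compute $\mathbb{P}_2(G)$ exactly for the finitely many groups where the $\zeta$-bound is too weak, which are precisely the small groups for which the inclusion--exclusion overcounting in the union bound is severe. Here I would use Philip Hall's formula
\[
\mathbb{P}_2(G) = \sum_{H \leqs G} \mu(H,G)\,|G:H|^{-2},
\]
where $\mu$ denotes the M\"obius function of the subgroup lattice of $G$, or equivalently a direct enumeration of generating pairs in a computer algebra system. For $A_6$ this yields $\mathbb{P}_2(A_6)=53/90$, while every other group on the list produces a strictly larger value, establishing both the global lower bound and the uniqueness of the extremal case.

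The hard part is that the threshold $37/90$ is extremely tight for the smallest simple groups: for example ${\rm L}_2(7)$ has $\zeta_G(2)=23/56$, which lies only just below $37/90$, so the estimates on $\zeta_G(2)$ must be genuinely sharp rather than merely asymptotic, and the precise boundary between the groups dispatched by the union bound and those requiring an exact computation must be pinned down. Consequently the verification that $A_6$ is the unique minimiser rests entirely on the exact values of $\mathbb{P}_2(G)$ for this delicate finite family of small groups, where the soft estimate for $\zeta_G(2)$ no longer suffices.
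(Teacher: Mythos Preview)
The paper does not supply a proof of this theorem at all: it is stated as \cite[Theorem 1.1]{MQR} and quoted without argument, as one of several ``extensions'' following the resolution of Dixon's conjecture. There is therefore no proof in the paper to compare your proposal against.

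That said, your outline is entirely in the spirit of the cited source and of the surrounding discussion in the paper. The strategy of bounding $\zeta_G(2)$ family-by-family via the Classification, using O'Nan--Scott for alternating groups and Aschbacher's theorem for classical groups to control maximal subgroups, and then falling back on exact computation (Hall's M\"obius formula or machine enumeration) for a residual finite list, is exactly the shape of the argument in \cite{MQR}. Your observation that the bound is extremely delicate near the bottom of the range---your ${\rm L}_2(7)$ example with $\zeta_G(2)=23/56$ sitting a hair below $37/90$---is well taken and reflects a genuine feature of the actual proof: the asymptotic estimates used to establish Dixon's conjecture are far too coarse here, and one really does need sharp explicit constants in the subgroup-counting step together with a careful case analysis for small rank and small $q$.

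One caveat worth flagging: for the groups on the exceptional list (including $A_6$ itself) the union bound $1-\zeta_G(2)$ can and does fall \emph{below} $53/90$, so the exact computation of $\mathbb{P}_2(G)$ there is not merely a tidying-up step but carries the full weight of the conclusion for those groups. You acknowledge this, but it is worth stressing that the division of labour between the $\zeta$-estimate and the direct computation is not optional.
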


It turns out that convergence in Conjecture \ref{c:dixon} is rather rapid and strong bounds on $\mathbb{P}_2(G)$ have been established for all simple groups $G$. For example, \cite[Theorem 1.1]{MRD} gives  
$$1 - \frac{1}{n} - \frac{8.8}{n^2} \leqs \mathbb{P}_2(A_n) \leqs 1-\frac{1}{n}-\frac{0.93}{n^2}$$
for all $n \geqs 5$. More generally, by \cite[Theorem 1.6]{LSh3}, there are absolute constants $c_1,c_2>0$ such that 
$$1 - \frac{c_1}{m(G)} \leqs \mathbb{P}_2(G) \leqs 1 - \frac{c_2}{m(G)}$$
for all finite simple groups $G$, where $m(G)$ denotes the minimal index of a proper subgroup of $G$. Note that $m(A_n) = n$. We refer the reader to \cite[Table 4]{GMPS} for a convenient list of the precise values of $m(G)$ when $G$ is a simple group of Lie type.

\subsection{$(a,b)$-generation}\label{s:ab}

Another interesting refinement of Theorem \ref{t:sd2} is to ask if it is possible to find a pair of generators of prescribed orders. With this in mind, for positive integers $a$ and $b$, let us say that a finite group $G$ is \emph{$(a,b)$-generated} if $G = \la x,y \ra$ with $|x|=a$ and $|y|=b$. It is natural to assume that both $a$ and $b$ are primes, at least one of which is odd (since any group generated by two involutions is dihedral). Here the special  case $(a,b)=(2,3)$ is particularly interesting because a group is $(2,3)$-generated if and only if it is a quotient of the modular group ${\rm PSL}_{2}(\mathbb{Z}) \cong Z_2 \star Z_3$. The $(2,3)$-generation problem for simple groups has been widely studied for more than a century and one of the main results is the following.

\begin{thm}
All sufficiently large non-abelian finite simple groups are $(2,3)$-generated, with the exception of ${\rm PSp}_{4}(2^f)$,  ${\rm PSp}_{4}(3^f)$ and ${}^2B_2(q)$.
\end{thm}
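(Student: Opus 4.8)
The plan is to follow the probabilistic strategy used to prove Dixon's conjecture, refined so as to control elements of prescribed order. By the Classification it suffices to treat the families of simple groups separately. The alternating groups $A_n$ are $(2,3)$-generated for all sufficiently large $n$ by a classical construction of Miller, and there are only finitely many sporadic groups, so neither contributes a genuine obstruction. Among the groups of Lie type, the Suzuki groups ${}^2B_2(q)$ must be excluded at once: since $|{}^2B_2(q)| = q^2(q^2+1)(q-1)$ with $q=2^{2m+1}$, one checks that $3 \nmid |{}^2B_2(q)|$, so these groups contain no element of order $3$ and cannot be $(2,3)$-generated. It therefore remains to establish the result for the remaining simple groups of Lie type.

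For such a group $G$, fix a conjugacy class $C_1$ of involutions and a conjugacy class $C_2$ of elements of order $3$ (both nonempty once the Suzuki groups are removed). If $x \in C_1$ and $y \in C_2$ are chosen uniformly at random, then $\la x, y\ra \neq G$ precisely when $x$ and $y$ lie in a common maximal subgroup, and the double-counting argument over conjugates used for $\zeta_G(2)$ gives the bound
\begin{equation}\label{e:Q23}
\mathbb{P}[\la x,y\ra \neq G] \leqs \sum_{H \in \mathcal{M}} |G:H|\cdot \frac{|C_1 \cap H|}{|C_1|}\cdot \frac{|C_2 \cap H|}{|C_2|} =: Q(G),
\end{equation}
where $\mathcal{M}$ is a set of representatives of the conjugacy classes of maximal subgroups of $G$. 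Here each ratio $|C_i \cap H|/|C_i|$ is exactly the fixed point ratio of an element of $C_i$ in the action of $G$ on $G/H$. The goal is to show that $Q(G) < 1$ for all sufficiently large $G$, for then (\ref{e:Q23}) produces a $(2,3)$-generating pair.

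The main work is to choose $C_1$ and $C_2$ well and to bound the fixed point ratios uniformly. Using Aschbacher's theorem \cite{asch} to partition $\mathcal{M}$ into the geometric families together with the class $\mathcal{S}$ of almost simple irreducible subgroups, one applies the strong fixed point ratio estimates available for classical groups to show that the contribution of each family to $Q(G)$ tends to $0$: for classical groups of unbounded rank this holds because the relevant ratios decay with the rank, while for the bounded-rank families (and, in a parallel analysis, the exceptional groups of Lie type) it holds because for a well-chosen pair of classes the ratios decay as a power of $q$, with only boundedly many classes of maximal subgroups to consider. In all but finitely many cases outside the symplectic family this forces $Q(G) \to 0$, and hence $(2,3)$-generation.

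The crux, and the expected main obstacle, is the family of symplectic groups ${\rm PSp}_{4}(q)$ in characteristics $2$ and $3$. Here the fixed point ratios of involutions and of order-$3$ elements are too large, reflecting the exceptional isogeny of ${\rm Sp}_{4}$ in characteristic $2$ and a degeneracy special to characteristic $3$, so that the estimate $Q(G)<1$ genuinely fails and the probabilistic method gives no information. For these two families I would instead argue directly that they are \emph{not} $(2,3)$-generated: exploiting the small number of classes of maximal subgroups of ${\rm PSp}_{4}(q)$, one analyses the possible configurations of a pair $(x,y)$ with $|x|=2$ and $|y|=3$ and shows that $\la x,y\ra$ always lies in a proper subgroup. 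Establishing this non-generation uniformly in $f$, and separating it cleanly from the generic positive case, is the delicate heart of the argument; the remaining finitely many genuinely exceptional groups (such as ${\rm PSL}_{2}(9) \cong A_6$) are absorbed by the ``sufficiently large'' hypothesis.
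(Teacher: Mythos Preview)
Your overall strategy for the classical groups is essentially the Liebeck--Shalev approach that the paper describes, so that part is on target. One technical refinement worth noting: rather than fixing particular classes $C_1,C_2$, the paper records that Liebeck--Shalev count \emph{all} elements of order $2$ and $3$ and prove the uniform estimate
\[
\sum_{H \in \mathcal{M}}\frac{i_2(H)\,i_3(H)}{i_2(G)\,i_3(G)} \;<\; c\cdot \zeta_G(66/65),
\]
thereby reducing everything to the single fact that $\zeta_G(s)\to 0$ for any fixed $s>1$. This packaging via the zeta function is what makes the argument uniform across the classical families, and it is the step that carries most of the weight.

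The substantive divergence is in your treatment of the exceptional groups. You propose to run the same fixed-point-ratio argument there, relying on bounded rank and decay in $q$. The paper instead attributes this case to L\"{u}beck--Malle, whose methods are character-theoretic (structure constants via Deligne--Lusztig theory), not probabilistic. Your route may well be feasible in principle, but it would require case-by-case control of the maximal overgroups of order-$2$ and order-$3$ elements in each exceptional type, which is real work and not what was actually done. Finally, your account of the non-generation of ${\rm PSp}_4(2^f)$ and ${\rm PSp}_4(3^f)$ is too vague: the invocation of the exceptional isogeny and a ``degeneracy special to characteristic~$3$'' is suggestive but not an argument. In the paper this is attributed to \cite[Proposition~6.3]{LSh}, where a direct and quite concrete analysis is carried out; you should expect to need something equally explicit rather than a general configuration search.
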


This follows from an old theorem of Miller \cite{Mil} for alternating groups, which reveals that $A_n$ is $(2,3)$-generated unless $n \in \{6,7,8\}$. The result for classical groups was proved by Liebeck and Shalev \cite{LSh} and the exceptional groups were handled by L\"{u}beck and Malle in \cite{LM}. More precisely, the latter paper shows that \emph{every} simple exceptional group of Lie type is $(2,3)$-generated, except for $G_2(2)' \cong {\rm U}_{3}(3)$ and of course the Suzuki groups ${}^2B_2(q)$, which do not contain elements of order $3$ (Suzuki \cite{Suz} showed that these groups are $(2,5)$-generated). For completeness, let us record that every sporadic simple group is $(2,3)$-generated, except for ${\rm M}_{11}$, ${\rm M}_{22}$, ${\rm M}_{23}$ and ${\rm McL}$ (see \cite{Wol}). 

For classical groups, Liebeck and Shalev adopt a probabilistic approach in \cite{LSh}, which uses several results on the maximal subgroups of classical groups, such as Aschbacher's theorem \cite{asch} and its extensions. As one might expect, detailed information on the conjugacy classes of elements of order $2$ and $3$ also plays an important role in the proofs. In \cite{LM}, L\"{u}beck and Malle adopt rather different techniques to study the $(2,3)$-generation of exceptional groups. Indeed, their main methods are character-theoretic, using the Deligne--Lusztig theory for reductive groups over finite fields. 

Let us briefly sketch the main ideas in \cite{LSh}. For positive integers $a$ and $b$, let $\mathbb{P}_{a,b}(G)$ be the probability that $G$ is generated by randomly chosen elements of order $a$ and $b$, so $G$ is $(a,b)$-generated if and only if $\mathbb{P}_{a,b}(G)>0$. It is easy to see that 
\begin{equation}\label{e:pab}
1 - \mathbb{P}_{a,b}(G) \leqs \sum_{H \in \mathcal{M}}\frac{i_a(H)i_b(H)}{i_a(G)i_b(G)},
\end{equation}
where $\mathcal{M}$ is the set of maximal subgroups of $G$ as before, and $i_m(X)$ is the number of elements of order $m$ in $X$. By carefully estimating $i_2(H)$ and $i_3(H)$ for $H \in \mathcal{M}$, Liebeck and Shalev show that there is an absolute constant $c$ such that
$$\sum_{H \in \mathcal{M}}\frac{i_2(H)i_3(H)}{i_2(G)i_3(G)} < \sum_{H \in \mathcal{M}}c|G:H|^{-66/65} = c\cdot \zeta_G(66/65)$$
for any finite simple classical group $G \ne {\rm PSp}_{4}(q)$, where $\zeta_G(s)$ is the zeta function in \eqref{e:zeta} (see \cite[Theorems 2.2 and 2.3]{LSh}). The result now follows from \cite[Theorem 2.1]{LSh}, which states that for any $s>1$, $\zeta_G(s) \to 0$ as $|G| \to \infty$ (note that $\zeta_G(1)$ is equal to the number of conjugacy classes of maximal subgroups of $G$, which tends to infinity with $|G|$). Moreover, by combining this result with \cite[Proposition 6.3]{LSh}, we get the following 
natural analogue of Conjecture \ref{c:dixon} for the $(2,3)$-generation of classical groups.

\begin{thm}
For finite simple classical groups $G$, as $|G| \to \infty$ we have
\[
\mathbb{P}_{2,3}(G) \to \left\{\begin{array}{ll} 
0 & \mbox{if $G = {\rm PSp}_4(p^f)$ with $p=2$ or $3$} \\
\frac{1}{2} & \mbox{if $G = {\rm PSp}_4(p^f)$ with $p\ne 2,3$} \\
1 & \mbox{otherwise.}
\end{array}
\right.
\]
\end{thm}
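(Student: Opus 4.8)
The plan is to treat the three cases separately, since the case distinction in the statement is precisely the dividing line at which the zeta-function estimate behind the generic argument breaks down. For the generic case --- any classical $G$ that is not of type ${\rm PSp}_4$ --- I would deduce $\mathbb{P}_{2,3}(G) \to 1$ directly from the material already assembled in the excerpt. By \eqref{e:pab} we have
\[
1 - \mathbb{P}_{2,3}(G) \leqs \sum_{H \in \mathcal{M}} \frac{i_2(H) i_3(H)}{i_2(G) i_3(G)} < c \cdot \zeta_G(66/65),
\]
and since $66/65 > 1$, the cited fact that $\zeta_G(s) \to 0$ for every $s > 1$ forces the right-hand side to $0$ as $|G| \to \infty$. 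Hence $\mathbb{P}_{2,3}(G) \to 1$, with no further work.

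For $G = {\rm PSp}_4(p^f)$ with $p = 2$ or $3$ the limit is $0$ for a structural rather than a probabilistic reason: the previously stated $(2,3)$-generation theorem lists these two families among its exceptions, so $G$ is not $(2,3)$-generated once $q$ is large, which is to say $\mathbb{P}_{2,3}(G) = 0$ for all large $q$. To make this self-contained I would establish the non-generation directly, exploiting the exceptional features of these small characteristics --- in characteristic $2$ the elements of order $3$ are semisimple while the involutions are unipotent (and the special $B_2 = C_2$ isogeny is available), whereas in characteristic $3$ the elements of order $3$ are unipotent and the involutions semisimple. In each case the aim is to show that every pair consisting of an involution and an element of order $3$ lies in a proper subgroup, so that no such pair can generate.

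The substance of the theorem lies in the case $G = {\rm PSp}_4(p^f)$ with $p \neq 2, 3$, where the claim is the sharp limit $\frac{1}{2}$. Here the strategy is to compute the right-hand side of \eqref{e:pab} asymptotically rather than merely to bound it. First I would use Aschbacher's theorem to enumerate the maximal subgroups $H$ of $G$, and for each Aschbacher class estimate the ratio $i_2(H)i_3(H)/(i_2(G)i_3(G))$ using the known conjugacy class data for involutions and elements of order $3$ (both being semisimple when $p \neq 2, 3$). The expectation is that all but one distinguished family of maximal subgroups contribute $o(1)$, while the summed contribution of the distinguished family converges to $\frac{1}{2}$; identifying this family and pinning down its leading-order element counts is the heart of the computation.

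The hard part will be upgrading the one-sided bound \eqref{e:pab} to a genuine two-sided estimate that isolates the constant $\frac{1}{2}$ exactly. The union bound alone can only yield $1 - \mathbb{P}_{2,3}(G) \leqs \frac{1}{2} + o(1)$; to obtain the matching lower bound I would pass to inclusion--exclusion, which requires controlling the probability that a random pair lies in the intersection of two or more conjugates of the distinguished maximal subgroups. Showing that these higher-order overlap terms are negligible --- so that the leading term governs the limit --- together with the precise asymptotic enumeration of involutions and order-$3$ elements both in $G$ and in the distinguished subgroups, is where the real effort is concentrated and where the exact value $\frac{1}{2}$ must ultimately be read off.
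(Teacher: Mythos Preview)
Your generic case is exactly the paper's argument: combine the bound $1 - \mathbb{P}_{2,3}(G) < c\cdot\zeta_G(66/65)$ (valid for classical $G \ne {\rm PSp}_4(q)$) with the fact that $\zeta_G(s) \to 0$ for every $s>1$. For the ${\rm PSp}_4$ families the survey offers no argument of its own --- it simply cites \cite[Proposition~6.3]{LSh} --- so there is nothing in this paper to compare your sketch against; your plan for the limit $\tfrac{1}{2}$ (isolate a dominant family of maximal subgroups contributing $\tfrac{1}{2}+o(1)$ to the sum in \eqref{e:pab}, show the remaining families contribute $o(1)$, then pass to inclusion--exclusion on the dominant family to upgrade the one-sided union bound to a two-sided asymptotic) is sound and is indeed the shape of the original Liebeck--Shalev analysis.
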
 

Using a similar approach, the main theorem of \cite{LSh02} shows that if $a$ and $b$ are primes, not both equal to $2$, then $\mathbb{P}_{a,b}(G) \to 1$ as $|G| \to \infty$, for all simple classical groups $G$ of sufficiently large rank (a sufficient bound on the rank can be given as a function of $a$ and $b$). 

\begin{re}
A complete classification of the $(2,3)$-generated finite simple groups remains out of reach, but there has been significant progress by Di Martino, Pellegrini, Tamburini, Vavilov and others, using constructive methods. For example, Pellegrini \cite{Pel} has very recently resolved the $(2,3)$-generation problem for the linear groups ${\rm L}_{n}(q)$; the only exceptions arise when $(n,q) \in \{(2,9),(3,4),(4,2)\}$, all of which are $(2,5)$-generated. In their recent survey article \cite{PT15}, Pellegrini and Tamburini make the interesting observation that $\O_{8}^{+}(2)$ and ${\rm P\O}_{8}^{+}(3)$ are the only known simple classical groups with natural module of dimension $n \geqs 8$ that are not $(2,3)$-generated. 
\end{re}

To conclude this section, let us briefly discuss the more general $(2,r)$-generation problem. By a theorem of Malle, Saxl and Weigel \cite[Theorem B]{MSW}, every finite simple group $G$ is $(2,r)$-generated for some integer $r \geqs 3$. In fact, for $G \ne {\rm U}_{3}(3)$, it is proved that $G$ is generated by an involution and a strongly real element (that is, an element $x$ so that $x^{-1} = y^{-1}xy$ for some involution $y$), which immediately implies that $G$ is generated by $3$ involutions (one can show that $4$ involutions are needed for ${\rm  U}_{3}(3)$). The following refinement of King \cite{King} shows that $r$ can be taken to be a prime.  

\begin{thm}\label{t:king}
Let $G$ be a non-abelian finite simple group. Then there exists a prime $r$ such that $G$ is $(2,r)$-generated.
\end{thm}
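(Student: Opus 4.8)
The plan is to combine the known structural results about $(2,r)$-generation already described in the excerpt with a uniform argument that upgrades ``some $r \geqs 3$'' to ``some \emph{prime} $r$''. I would proceed case by case according to the Classification, since the statement is an existence claim for each individual simple group rather than an asymptotic one, and the quantity $r$ may genuinely depend on the family.

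\medskip

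First I would dispose of the alternating groups. By Theorem~\ref{t:sd2} we have the explicit $(2,r)$-type generators, but more usefully the $(2,3)$-generation of $A_n$ for $n \notin \{6,7,8\}$ (Miller) immediately gives the prime $r=3$ in all but finitely many cases, and the three small exceptions can be checked directly to be $(2,r)$-generated for a suitable small prime (for instance $A_6$, $A_7$, $A_8$ are readily seen to be $(2,5)$-generated). For the groups of Lie type I would split off the exceptional families first: every simple exceptional group is $(2,3)$-generated except $G_2(2)' \cong {\rm U}_3(3)$ and the Suzuki groups ${}^2B_2(q)$, and for the latter Suzuki's own result gives $(2,5)$-generation, so $r=3$ or $r=5$ works throughout. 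The sporadic groups are handled by the finite list in the excerpt: all are $(2,3)$-generated except ${\rm M}_{11},{\rm M}_{22},{\rm M}_{23}$ and ${\rm McL}$, each of which can be verified to be $(2,r)$-generated for a small prime $r$ by direct computation.

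\medskip

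The substantive work is the classical groups, and here I would lean on the theorem of Malle, Saxl and Weigel \cite{MSW}: apart from ${\rm U}_3(3)$, every simple group $G$ is generated by an involution $x$ and a strongly real element $z$. The point is to arrange that $z$ can be taken of prime order. I would argue that the generating pair $(x,z)$ lies in no proper subgroup, and then replace $z$ by a suitable prime-order power or a nearby prime-order class while preserving the generation property. Concretely, for classical groups of large rank I would invoke the probabilistic machinery of \S\ref{s:ab}: for a fixed prime $r$ dividing $|G|$ and coprime to the relevant exceptional obstructions, the estimate analogous to~\eqref{e:pab} shows that $\mathbb{P}_{2,r}(G) \to 1$, so all but finitely many classical groups are $(2,r)$-generated for that $r$. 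The finitely many remaining groups of each bounded rank, together with the small-field cases, would be treated by explicit construction of an involution and a prime-order element generating $G$, using knowledge of the conjugacy classes of semisimple and unipotent elements.

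\medskip

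The hard part will be the uniform choice of prime $r$ across the classical families and the control of the finitely many small exceptions. Passing from a strongly real generator to one of prime order is not automatic: one must ensure that the power of $z$ of prime order still generates $G$ together with $x$, which requires ruling out that the pair falls into a maximal subgroup. The cleanest route is to let $r$ be a primitive prime divisor of $q^e - 1$ for the appropriate $e$ tied to the natural module dimension, so that elements of order $r$ are irreducible or act with a large eigenvalue structure and hence lie in very few maximal subgroups (those in Aschbacher's classes $\mathcal{C}_3$, $\mathcal{C}_6$ or $\mathcal{S}$); the fixed-point-ratio bounds then force any overgroup of an order-$r$ element to be small, and a short argument shows a generic involution completes it to all of $G$. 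Verifying that such a primitive prime divisor exists and behaves well, via Zsygmondy's theorem and its known exceptions, and then clearing the residual low-dimensional and small-field cases by computation, is where essentially all the effort lies.
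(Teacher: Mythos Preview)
Your final paragraph lands on essentially the same approach as King's proof in the paper: choose $r$ to be a primitive prime divisor of $q^e-1$ with $e>n/2$, so that the maximal overgroups of an element $x$ of order $r$ are pinned down by \cite{GPPS}, and then show that a random involution generates $G$ together with $x$. The paper makes this precise via the bound in~\eqref{e:up}, namely $1-\mathbb{P}_2(G,x) \leqs \sum_{H \in \mathcal{M}(x)} i_2(H)/i_2(G)$, summing only over the few $H$ containing $x$ rather than over all maximal subgroups as in~\eqref{e:pab}. That refinement is what makes the argument go through for \emph{every} classical group with $n \geqs 8$, not just asymptotically. The paper also singles out ${\rm PSp}_4(2^f)$ and ${\rm PSp}_4(3^f)$, which are shown to be $(2,5)$-generated separately, after which previous literature reduces the problem to $n \geqs 8$.

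Your middle paragraph, however, is a detour that does not work as written. The Malle--Saxl--Weigel route gives a strongly real generator $z$, but passing to a prime-order power of $z$ can destroy generation, and you offer no mechanism to repair this; you yourself flag it as ``not automatic'' and then abandon it. More seriously, the asymptotic claim ``for a fixed prime $r$ \ldots\ $\mathbb{P}_{2,r}(G)\to 1$, so all but finitely many classical groups are $(2,r)$-generated for that $r$'' is wrong: a fixed prime need not divide $|G|$ at all, and the Liebeck--Shalev results you have in mind are asymptotic in the \emph{rank}, leaving infinitely many groups of each bounded rank (over varying $q$) unaddressed. Drop this paragraph entirely and go straight to the ppd argument of your last paragraph, making the bound~\eqref{e:up} explicit; that is exactly how the paper proceeds.
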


Once again, the proof uses probabilistic methods and we give a brief sketch of the main steps. In view of earlier work, we may  assume $G$ is a classical group over $\mathbb{F}_{q}$, with natural module of dimension $n$. By applying the bound in \eqref{e:pab} (with $a=2$ and $b=5$), King shows that the symplectic groups ${\rm PSp}_{4}(2^f)$ and ${\rm PSp}_{4}(3^f)$ are $(2,5)$-generated. By combining this observation with previous results in the literature, the problem can be reduced to classical groups with $n \geqs 8$. To tackle these groups, we need to recall the notion of a primitive prime divisor.

\begin{de}\label{d:ppd}
For integers $q,e \geqs 2$, a prime divisor $r$ of $q^e-1$ is a \emph{primitive prime divisor} (ppd for short) if $r$ does not divide $q^i-1$ for each $1 \leqs i<e$. 
\end{de}

By a classical theorem of Zsigmondy \cite{Zsig}, such a prime $r$ exists unless $(q,e) = (2^a-1,2)$ or $(2,6)$. Let $r$ be a ppd of $q^e-1$, where $e = e(n)$ is maximal with respect to the condition that $r$ divides $|G|$. For instance, $e = n$ if $G = {\rm L}_{n}(q)$ or ${\rm PSp}_{n}(q)$, and $e=n-2$ if $G = {\rm P\O}_{n}^{+}(q)$. Let $x \in G$ be an element of order $r$ and let $\mathcal{M}(x)$ be the set of maximal subgroups of $G$ containing $x$. 

For $(2,r)$-generation, it suffices to show that $\mathbb{P}_2(G,x)>0$, where $\mathbb{P}_2(G,x)$ is the probability that $x$ and a randomly chosen involution generate $G$. Now 
\begin{equation}\label{e:up}
1-\mathbb{P}_2(G,x) \leqs \sum_{H \in \mathcal{M}(x)} \frac{i_2(H)}{i_2(G)}
\end{equation}
and this essentially reduces the argument to determining $\mathcal{M}(x)$ and then counting the involutions in each $H \in \mathcal{M}(x)$. By choosing $r$ to be a ppd of $q^e-1$ with  $e>n/2$, we can appeal to \cite{GPPS}, which uses Aschbacher's theorem \cite{asch} to determine the maximal subgroups of $G$ containing elements of order $r$. It follows that the subgroups in $\mathcal{M}(x)$ are very restricted and this makes it easier to estimate the upper bound in \eqref{e:up}. This approach is effective in almost all cases, with only a handful of low-dimensional groups requiring further attention (see \cite[Section 7]{King}).

\spc

Notice that King's proof does not yield an absolute bound on the prime $r$ in the statement of the theorem (indeed, $r$ tends to infinity with the rank of the group). However, it is natural to ask if there is an absolute constant $R$ such that every finite simple group is $(2,r)$-generated for some prime $r \leqs R$. In view of the above results, it is not difficult to show that $r \leqs 5$ if $G$ is an alternating or sporadic group, and King's proof shows that $r \leqs 7$ if $G$ is a classical group with natural module of dimension $n \leqs 7$ (the group ${\rm U}_{3}(3)$ is neither $(2,3)$ nor $(2,5)$-generated). The bound $r \leqs 7$ also holds for exceptional groups of Lie type. This leads us naturally to the following conjecture of Conder, which is still open.

\begin{con}[Conder, 2015]
\emph{Every non-abelian finite simple group is $(2,r)$-generated for some $r \in \{3,5\}$, except for ${\rm U}_{3}(3)$, which is $(2,7)$-generated.}
\end{con}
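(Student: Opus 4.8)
The plan is to establish the conjecture by combining the known $(2,3)$-generation results with a $(2,5)$-generation analysis of the exceptional cases, proceeding type-by-type via the Classification. First I would dispose of the families already under control. By Miller \cite{Mil}, every alternating group $A_n$ with $n \geqs 5$ is $(2,3)$-generated apart from $n \in \{6,7,8\}$, and a direct computation shows that $A_6$, $A_7$ and $A_8$ are $(2,5)$-generated; similarly, the sporadic groups are $(2,3)$-generated except for ${\rm M}_{11}$, ${\rm M}_{22}$, ${\rm M}_{23}$ and ${\rm McL}$ (see \cite{Wol}), each of which is readily checked to be $(2,5)$-generated. For exceptional groups, \cite{LM} gives $(2,3)$-generation with the sole exceptions of $G_2(2)' \cong {\rm U}_3(3)$ and the Suzuki groups ${}^2B_2(q)$, and the latter are $(2,5)$-generated by Suzuki \cite{Suz}; thus only ${\rm U}_3(3)$ survives, and it is the genuine exception, being $(2,7)$-generated. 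By Pellegrini \cite{Pel} the linear groups ${\rm L}_n(q)$ are completely settled, the three exceptions being $(2,5)$-generated, and the symplectic families ${\rm PSp}_4(2^f)$, ${\rm PSp}_4(3^f)$ are $(2,5)$-generated by King (as noted in the proof sketch of Theorem \ref{t:king}). This reduces the conjecture to the remaining classical groups of symplectic, unitary and orthogonal type.

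For those groups I would run King's probabilistic machinery, but now with the fixed target primes $r \in \{3,5\}$ in place of a large primitive prime divisor. Concretely, for a suitable element $x$ of order $3$ (or of order $5$ when no admissible order-$3$ element exists) I would try to show $\mathbb{P}_2(G,x) > 0$ by bounding the right-hand side of \eqref{e:up}, namely $\sum_{H \in \mathcal{M}(x)} i_2(H)/i_2(G)$, strictly below $1$. In the large-$|G|$ regime this should be feasible using the zeta-function estimates of Liebeck and Shalev \cite{LSh} adapted to the numerators $i_2(H)i_3(H)$ or $i_2(H)i_5(H)$, exactly as in the passage leading to \eqref{e:pab}; the outcome is that all sufficiently large classical groups are $(2,3)$-generated, with the known symplectic and orthogonal exceptions then handled via $r=5$.

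The hard part, and the reason the conjecture remains open, is twofold. First, for $r \in \{3,5\}$ the overgroup set $\mathcal{M}(x)$ is far larger and less structured than in King's original argument: an element of small fixed order is generally \emph{not} a primitive prime divisor element in high dimension, so the sharp consequences of \cite{GPPS} and Aschbacher's theorem \cite{asch} that pin down $\mathcal{M}(x)$ no longer apply, and one must instead control the contributions from geometric subgroups of every Aschbacher class together with the almost simple members of class $\mathcal{S}$. Second, and more seriously, the asymptotic estimates leave an intermediate range of classical groups---too large for direct computation but too small for the probabilistic bounds to close---and it is precisely here that the $(2,3)$-generation status is still unresolved (compare the remark on $\O_8^+(2)$ and ${\rm P\O}_8^+(3)$). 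I expect the decisive step to be the treatment of this boundary range, which will likely require either substantially sharpened probabilistic estimates with explicit constants, or the constructive methods of Di Martino, Pellegrini, Tamburini and Vavilov to exhibit explicit generators of order $2$ and $3$ (or $2$ and $5$) uniformly across each remaining family.
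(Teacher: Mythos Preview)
The statement you are attempting is a \emph{conjecture}, not a theorem: the paper gives no proof and explicitly records it as ``still open''. There is therefore nothing in the paper to compare your argument against. To your credit, your proposal does not pretend otherwise---you correctly flag the two genuine obstructions (the failure of the \cite{GPPS} machinery for elements of order $3$ or $5$, and the intermediate range of classical groups where neither asymptotic bounds nor direct computation bite) and you accurately reduce the problem to the remaining classical families using the literature cited in the paper. As a strategic outline this is sensible and well-informed, but it is a plan rather than a proof: the steps you describe as ``should be feasible'' and ``I expect the decisive step to be'' are exactly the content that would constitute a resolution of the conjecture, and nobody currently knows how to carry them out.
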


\subsection{Triangle generation}\label{s:tri}

Let $a$, $b$ and $c$ be positive integers with $a \leqs b \leqs c$. We say that a group $G$ is \emph{$(a,b,c)$-generated} if $G = \la x, y \ra$ for elements $x,y \in G$ such that $|x|$, $|y|$ and $|xy|$ divide $a,b$ and $c$, respectively. This is equivalent to the condition that $G$ is a quotient of the \emph{triangle group}
\[
T_{a,b,c} = \la x,y,z \mid x^a = y^b = z^c = xyz = 1 \ra.
\]

The problem of determining the finite simple quotients of triangle groups has attracted significant attention for more than a century. One of the main motivations stems from a famous theorem of Hurwitz from 1893, which states that $|{\rm Aut}(S)| \leqs 84(g-1)$ for any compact Riemann surface $S$ of genus $g \geqs 2$, with equality if and only if ${\rm Aut}(S)$ is a $(2,3,7)$-group (these groups are called \emph{Hurwitz groups}). There has been substantial progress towards a classification of simple Hurwitz groups, but this remains an open problem (see \cite{Con} for a nice survey of results). One of the highlights is the following theorem of Conder \cite{Conder}, which settles a conjecture of Higman from the 1960s asserting that all but finitely many alternating groups are Hurwitz.

\begin{thm}\label{t:con}
The alternating group $A_n$ is a Hurwitz group for all $n \geqs 168$, and for all but $64$ integers in the range $3 \leqs n \leqs 167$.
\end{thm}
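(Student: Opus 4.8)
The plan is to realise $A_n$ as a quotient of the triangle group $T_{2,3,7}$, equivalently to produce permutations $x,y$ of $\{1,\ldots,n\}$ with $|x|=2$, $|y|=3$ and $|xy|=7$ that together generate $A_n$. The combinatorial device for organising such pairs is the \emph{coset diagram}: a labelled graph on $n$ vertices in which the $3$-cycles of $y$ are drawn as oriented triangles and the transpositions of $x$ as edges joining their vertices, with the fixed points of $x$ and $y$ marked accordingly. A connected such diagram in which the extra relation $(xy)^7 = 1$ holds corresponds exactly to a transitive $(2,3,7)$-generated group of degree $n$, so the task reduces to building connected coset diagrams of every admissible degree whose group is $A_n$.

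The engine of the construction is Higman's \emph{composition}, or sewing, of coset diagrams. First I would isolate a local configuration — a \emph{handle} — consisting of a small prescribed pattern of points and $xy$-cycles, and show that if two diagrams $D_1,D_2$ each carry a handle then the transpositions of $x$ at the handle points can be rewired so as to fuse $D_1$ and $D_2$ into a single connected diagram of degree $\deg D_1 + \deg D_2$. The essential content is that the rewiring is arranged so that the order of $xy$ stays equal to $7$; this is precisely the defining property demanded of a handle, and verifying it is the technical heart of the step. I would then assemble a finite library of base diagrams of small degree, each equipped with a handle, so that composing any one of them repeatedly with a single fixed block of degree $m$ realises all sufficiently large degrees within each residue class modulo $m$; since the library is chosen to meet every residue class, every sufficiently large $n$ is attained, and connectedness of a composite yields transitivity automatically.

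It then remains to show that the group $G = \la x,y \ra$ arising from such a diagram is the \emph{whole} of $A_n$. Since $G$ is a quotient of the perfect group $T_{2,3,7}$ it is itself perfect, and a perfect subgroup of $S_n$ lies in $A_n$; thus $G \leqs A_n$ and it suffices to rule out proper subgroups. I would first promote transitivity to primitivity, engineering the base diagrams so that no nontrivial block system can survive composition, and then exploit the presence of an element of small, prime-length support — for instance a cycle of prime length fixing at least three points. Invoking a Jordan-type theorem, a primitive group of degree $n$ containing such a $p$-cycle must contain $A_n$, and combined with $G \leqs A_n$ this forces $G = A_n$.

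The hard part will be the explicit design and bookkeeping of the base diagrams and their handles: one must exhibit enough small diagrams, check by hand or by machine that each carries a handle with the order-$7$ property, verify primitivity of the composites, and confirm that the reachable degrees miss only finitely many values, so as to pin down the uniform threshold $n \geqs 168$. The residual finite range $3 \leqs n \leqs 167$ cannot be settled by this asymptotic machinery and is instead treated by direct computation — explicit generating triples together with coset enumeration or permutation-group algorithms — which simultaneously identifies the exact set of $64$ exceptional degrees.
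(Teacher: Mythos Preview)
The paper does not give its own proof of this theorem: it is stated as a result of Conder~\cite{Conder} and cited without argument, so there is no in-paper proof to compare against. Your sketch is, however, an accurate outline of Conder's actual proof: the Higman coset-diagram machinery, the composition (``sewing'') of diagrams along handles to realise all large degrees, the use of perfectness of $T_{2,3,7}$ to force the image into $A_n$, and a Jordan-type criterion (primitivity plus a short prime cycle) to pin down $A_n$ itself, with the finite residual range $3 \leqs n \leqs 167$ handled by explicit search. The paper itself alludes to this, remarking later that Everitt's work ``builds on the coset-diagram methodology developed by Higman and Conder''. So your proposal is correct and matches the source argument, even though there is nothing in the present paper to set it against.
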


For the remainder of this section, we will discuss some more recent results concerning the triangle generation of finite simple groups and related problems.

\spc

Let $G$ be a simple algebraic group over an algebraically closed field $K$ of characteristic $p>0$.  For a fixed triple $(a,b,c)$ of integers, it is natural to ask if there are any values of $r$ such that the corresponding finite quasisimple group $G(p^r)$ is $(a,b,c)$-generated. Here we may assume that $(a,b,c)$ is \emph{hyperbolic}, which means that 
$$\frac{1}{a}+\frac{1}{b}+\frac{1}{c}<1.$$
Indeed, if this condition is not satisfied, then either $T_{a,b,c}$ is soluble, or $(a,b,c)=(2,3,5)$ and $T_{a,b,c} \cong A_5$. In \cite{Mac}, Macbeath proves that ${\rm L}_{2}(p^r)$ is Hurwitz if and only if $r=1$ and $p \equiv 0 ,\pm 1 \imod{7}$, or $r=3$ and $p \equiv \pm 2, \pm 3 \imod{7}$. This is extended by Marion in \cite[Corollary 1]{Marl2}, which states that if $(a,b,c)$ is a hyperbolic triple of primes and $p$ is any prime number, then ${\rm L}_{2}(p^r)$ is $(a,b,c)$-generated if and only if $p^r$ is the smallest power of $p$ such that ${\rm lcm}(a,b,c)$ divides $|{\rm L}_{2}(p^r)|$ (in particular, $r$ is unique). 

To shed further light on Marion's result for ${\rm L}_{2}(p^r)$, we need some additional terminology. For a positive integer $m$, let $j_m(G)$ be the dimension of the subvariety of $G$ of elements of order dividing $m$. Let us say that a triple $(a,b,c)$ of positive integers is \emph{rigid} for $G$ if 
\begin{equation}\label{e:rigid}
j_a(G) + j_b(G) +j_c(G) = 2\dim G.
\end{equation}
We can now state the following conjecture (see \cite[p.621]{Mar10}).  

\begin{con}[Marion, 2010]\label{c:marion}
\emph{Fix a prime $p$ and let $G$ be a simple algebraic group over an algebraically closed field of characteristic $p>0$. If $(a,b,c)$ is a rigid hyperbolic triple of primes for $G$, then there are only finitely many positive integers $r$ such that $G(p^r)$ is $(a,b,c)$-generated.} 
\end{con}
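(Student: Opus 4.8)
\emph{Proof strategy.} The plan is to translate the $(a,b,c)$-generation of the finite groups $G(p^r)$ into a statement about the representation variety of the triangle group $T_{a,b,c}$ into the algebraic group $G$, and then exploit rigidity to force the relevant representations to be isolated. Fix conjugacy classes $C_1,C_2,C_3$ of $G$ consisting of elements of order dividing $a,b,c$ respectively, and consider
\[
R(C_1,C_2,C_3) = \{(x,y,z) \in C_1 \times C_2 \times C_3 \,:\, xyz = 1\},
\]
together with the character variety $\mathcal{X} = R /\!/ G$ obtained by quotienting out simultaneous conjugation. A homomorphism $T_{a,b,c} \to G$ is precisely a point of some $R(C_1,C_2,C_3)$. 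If $G(p^r)$ is $(a,b,c)$-generated, then a generating triple is a point of $R$ fixed by the Frobenius endomorphism $F_r$ with $G^{F_r}=G(p^r)$, and it is \emph{Zariski-dense} in $G$ because $G(p^r)$ is. So the first step is to control the Zariski-dense locus of $\mathcal{X}$ across all admissible class triples.

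The heart of the argument is a dimension count via deformation theory. For a Zariski-dense $\rho$, the tangent space to $\mathcal{X}$ at $[\rho]$ is $H^1(T_{a,b,c},\g)$, where $\g = \mathrm{Lie}(G)$ carries the adjoint action twisted by $\rho$. Here $\g^{\rho(T_{a,b,c})}=\g^{G}=0$ (since $G$ is simple and $\rho$ is dense), and the Euler characteristic computation for the thrice-punctured sphere gives
\[
\dim H^1(T_{a,b,c},\g) = \dim C_1 + \dim C_2 + \dim C_3 - 2\dim G,
\]
using $\dim C_i = \dim G - \dim C_G(x_i)$. Since each $C_i$ consists of elements of order dividing some $a_i \in \{a,b,c\}$, we have $\dim C_i \leqs j_{a_i}(G)$, so the rigidity hypothesis \eqref{e:rigid} forces $\dim H^1(T_{a,b,c},\g)\leqs 0$, whence $H^1=0$. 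Thus every Zariski-dense representation is an isolated, reduced point of $\mathcal{X}$ (and, incidentally, $\dim C_i = j_{a_i}(G)$ must hold). As $\mathcal{X}$ is a scheme of finite type, its Zariski-dense locus is then a \emph{finite} set.

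To conclude, observe that each isolated Zariski-dense $\rho$ has a well-defined image $\mathrm{Im}(\rho)$, a single finite subgroup of $G(\overline{\mathbb{F}}_p)$ defined over a fixed finite field. If $\rho$ witnesses the $(a,b,c)$-generation of $G(p^r)$, then $\mathrm{Im}(\rho)=G(p^r)$; since the orders $|G(p^r)|$ are pairwise distinct, each such $\rho$ accounts for at most one value of $r$. Combined with the finiteness of the Zariski-dense locus, this yields only finitely many $r$ for which $G(p^r)$ is $(a,b,c)$-generated, as required.

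The hard part is the cohomological dimension formula in characteristic $p$. When $p \mid abc$ the generators $\rho(x_i)$ are \emph{unipotent}, and then both the identification of $j_{a_i}(G)$ with the maximal class dimension in $\{g : g^{a_i}=1\}$ and the vanishing of the obstruction space $H^2(T_{a,b,c},\g)$ can fail; in bad characteristic $\g$ need not be self-dual, so Poincaré duality no longer guarantees $H^2=0$ and the displayed formula for $\dim H^1$ breaks, while $\g^{G}$ may itself be nonzero. In that regime a nonzero $H^2$ can produce positive-dimensional families of Zariski-dense representations, so "rigid triple'' no longer forces an isolated locus. Controlling the unipotent varieties $\{g : g^p = 1\}$ and the scheme structure of $\mathcal{X}$ case by case is therefore the crux, and it is precisely here that a uniform argument is elusive and delicate exceptional behaviour may intervene.
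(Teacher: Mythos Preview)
The statement you are attempting to prove is a \emph{conjecture}: the paper does not give a proof, but rather surveys the partial progress towards it. So there is no ``paper's own proof'' to compare against. What the paper does report is that Marion \cite{Mar10} reduced the conjecture to a handful of cases (chiefly ${\rm Sp}_{2m}$, ${\rm PSp}_4$, $G_2$) by classifying rigid prime triples and, for the linear groups, invoking Scott's theorem to show that ${\rm GL}_n(K)$ has only finitely many orbits on irreducible $(a,b,c)$-triples; and that Larsen--Lubotzky--Marion \cite{LLM} used deformation theory to settle the generalised conjecture provided $p \nmid abcd$, with $d$ the determinant of the Cartan matrix.

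Your proposal is essentially a sketch of the \cite{LLM} deformation-theoretic approach: set up the representation/character variety of $T_{a,b,c}$ in $G$, compute $\dim H^1(T_{a,b,c},\g)$ via the Euler characteristic of the thrice-punctured sphere, and use rigidity to force this to vanish so that Zariski-dense representations are isolated. That outline is sound in good characteristic, and you correctly identify in your final paragraph exactly where it breaks: when $p \mid abc$ (or in bad characteristic for $G$) the identification of $H^1$ with the tangent space, the self-duality of $\g$, and the equality between $j_{a_i}(G)$ and the top class dimension can all fail. This is not a minor technicality you could patch --- it is precisely the obstruction that leaves the conjecture open for quasisimple groups, as the paper notes. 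So your write-up is a fair summary of the known strategy and its limitations, but it is not a proof of the conjecture, and you should not present the middle paragraph as concluding ``as required'' when the hypothesis needed for the displayed $H^1$ formula has not been secured in general.
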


In the special case $G = {\rm PSL}_{2}(K)$ we have $\dim G = 3$ and $j_m(G) = 2$ for all $m \geqs 2$, so every triple is rigid for $G$ and thus the conclusion of the conjecture is in agreement with \cite[Corollary 1]{Marl2}, as stated above. Significant progress towards a proof of the conjecture is made in \cite{Mar10}, where Marion reduces the problem to a handful of cases with 
$$\mbox{$G = {\rm Sp}_{2m}(K)$ (for $m \leqs 13$), ${\rm PSp}_{4}(K)$ or $G_2(K)$.}$$ 

To do this, one first determines the rigid hyperbolic triples of primes for $G$, which is a relatively straightforward exercise using the known dimensions of conjugacy classes of elements of prime order in simple algebraic groups. The rigidity condition in \eqref{e:rigid} is highly restrictive. For instance, if $G$ is an exceptional group, then $G=G_2(K)$ with $(a,b,c) = (2,5,5)$ is the only possibility. 

To complete the reduction, the main step is to eliminate a handful of linear groups $G = {\rm SL}_{n}(K)$. Here the main tool is a well-known theorem of Scott \cite{Scott}, which is used to show that ${\rm GL}_{n}(K)$ has only finitely many orbits on the set
$$\{(x,y,z) \in G^3 \,:\, x^a=y^b=z^c=1,\, xyz=1,\, \mbox{$\la x,y \ra$ irreducible}\},$$
where a subgroup of $G$ is said to be irreducible if it acts irreducibly on the natural module for $G$. Up to conjugacy in ${\rm GL}_{n}(K)$, it follows that there are only finitely many irreducible $(a,b,c)$-generated subgroups of $G$, which immediately gives the desired result in these cases.

Using a completely different approach, Larsen, Lubotzky and Marion \cite{LLM} apply tools from deformation theory to study a generalised version of Conjecture \ref{c:marion}, where the prime condition on the triple $(a,b,c)$ is dropped. The main result is \cite[Theorem 1.7]{LLM}, which establishes this extended form of the conjecture unless $p$ divides $abcd$, where $d$ is the determinant of the Cartan matrix of $G$. This result has recently been pushed further in \cite{JLM}, which proves the extended conjecture for all simple groups $G(p^r)$. As in \cite{Mar10}, the approach in \cite{JLM} relies heavily on the classification of hyperbolic rigid triples (with no prime conditions) and most of the work involves the case $G = {\rm PSp}_{4}(K)$ with $(a,b) = (3,3)$, which requires special attention and different techniques. 

Conjecture \ref{c:marion} for quasisimple groups is still open, even for prime triples. It is also worth noting that the converse to the conjecture is false. For example, $(2,3,7)$ is non-rigid for $G = {\rm SL}_{7}(K)$, but ${\rm SL}_{7}(q)$ is never a Hurwitz group for any prime power $q$ (see \cite[p.623]{Mar10} for further details and examples).

\spc

Natural extensions of triangle generation can be studied by observing that every hyperbolic triangle group $T_{a,b,c}$ is a special type of \emph{Fuchsian group}. This broader family of groups arises naturally in geometry and combinatorial group theory (formally, a Fuchsian group is a finitely generated non-elementary discrete group of isometries of the hyperbolic plane $\mathbb{H}^2$). An orientation-preserving Fuchsian group $\Gamma$ has a rather simple presentation, with generators
$$a_1, b_1, \ldots, a_g, b_g,\; x_1, \ldots, x_d, \; y_1, \ldots, y_s,\; z_1, \ldots, z_t$$
and relations
$$x_1^{m_1} = \cdots = x_d^{m_d} = 1, \; x_1 \cdots x_dy_1 \cdots y_sz_1\cdots z_t[a_1,b_1]\cdots [a_g,b_g] = 1$$
with $g,d,s,t \geqs 0$ and $m_i \geqs 2$ for all $i$. Here $g \geqs 0$ is called the \emph{genus} of $\Gamma$ (the non-orientation-preserving groups admit a similar presentation). In this setting, a triangle group corresponds to the situation where $g=s=t=0$ and $d=3$, so it is natural to extend the notion of triangle generation by considering the finite quotients of arbitrary Fuchsian groups.

A number of remarkable results in this direction have been established in recent years. For instance, Everitt \cite{Everitt} has shown that if $\Gamma$ is an oriented Fuchsian group, then all but finitely many alternating groups are quotients of $\Gamma$. This establishes a  conjecture of Higman (in the oriented case), which generalises Conder's theorem on Hurwitz groups (see Theorem \ref{t:con}). Everitt's approach in \cite{Everitt} builds on the coset-diagram methodology developed by Higman and Conder. By applying very different methods, using a combination of character-theoretic and probabilistic tools, Liebeck and Shalev prove the following theorem, which settles Higman's conjecture in full generality (see \cite[Theorem 1.7]{LSh2004}). In the statement, we use the notation
$${\rm Hom}_{{\rm trans}}(\Gamma, A_n) = \{\varphi \in {\rm Hom}(\Gamma,A_n) \,:\, \mbox{$\varphi(\Gamma)$ is transitive}\}.$$

\begin{thm}
Let $\Gamma$ be a Fuchsian group. Then the probability that a random homomorphism in ${\rm Hom}_{{\rm trans}}(\Gamma, A_n)$ is an epimorphism tends to $1$ as $n \to \infty$.  
\end{thm}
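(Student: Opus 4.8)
The plan is to estimate the probability that a random $\varphi \in {\rm Hom}_{{\rm trans}}(\Gamma, A_n)$ fails to be surjective by summing over the transitive proper subgroups of $A_n$ that could contain the image, exactly in the spirit of the zeta-function argument used for Dixon's conjecture. Write $|{\rm Hom}_{\rm trans}(\Gamma, A_n)|$ in the denominator and, for the numerator of the failure probability, bound the number of transitive homomorphisms whose image lies in a proper subgroup $M < A_n$. The key asymptotic input will be a formula (or sharp estimate) for $|{\rm Hom}(\Gamma, G)|$ in terms of the genus, the number of elliptic/parabolic/hyperbolic generators, and representation-theoretic data of $G$.

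The central technical tool I would invoke is the Frobenius-type character formula for counting homomorphisms from a Fuchsian group. For the presentation given in the excerpt, $|{\rm Hom}(\Gamma, G)|$ can be written as a sum over ${\rm Irr}(G)$ of a product of terms of the form $|G|^{2g-1}$ (from the commutators $[a_i,b_i]$), a factor $|G|^s |G|^t$ from the free generators $y_j, z_k$, and class-function contributions $\frac{|x_i^G|\,\chi(x_i)}{\chi(1)}$ from each elliptic generator $x_i$ of order dividing $m_i$. The upshot is that the dominant contribution comes from the trivial character, giving the leading term
\[
|{\rm Hom}(\Gamma, G)| \sim |G|^{(2g-1) + s + t} \prod_{i=1}^d |\{u \in G : u^{m_i}=1\}|,
\]
with the error controlled by the Witten-type zeta function $\sum_{\chi \ne 1} \chi(1)^{-\eta}$ for a suitable exponent $\eta = \eta(\Gamma) > 0$ determined by the Euler characteristic of $\Gamma$; since $\Gamma$ is Fuchsian (hence of negative Euler characteristic), this exponent is positive and the relevant zeta value for $A_n$ tends to $0$ as $n \to \infty$. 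One first establishes this estimate for $G = A_n$, then for the relevant proper subgroups $M$.

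Next I would show the failure probability is negligible. Transitivity forces any image-containing subgroup $M$ to be transitive, so the relevant $M$ are (i) transitive imprimitive subgroups contained in maximal intransitive-or-imprimitive overgroups $S_k \wr S_{n/k}$, and (ii) primitive maximal subgroups. For each such $M$ one compares $|{\rm Hom}(\Gamma, M)|$ against $|{\rm Hom}_{\rm trans}(\Gamma, A_n)|$ using the leading-term formula above: the ratio is governed by $\big(|M|/|A_n|\big)^{-\chi(\Gamma)}$ up to the number-of-fixed-points contributions from the elliptic generators, and one sums these over conjugacy classes of subgroups. The bulk of the contribution comes from the intransitive/imprimitive maximal subgroups; the primitive ones are handled by the strong bounds on the number and order of primitive groups (so their total contribution decays even faster). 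Combining the $A_n$ asymptotic with these subgroup estimates shows $1 - \mathbb{P}(\varphi \text{ surjective}) \to 0$.

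The main obstacle, and where the delicate analysis lies, is controlling the contribution of the elliptic generators $x_1, \ldots, x_d$ — that is, estimating $|\{u \in M : u^{m_i} = 1\}|$ versus $|\{u \in A_n : u^{m_i}=1\}|$ uniformly over all relevant $M$. This count of solutions to $u^{m}=1$ is exactly the sort of statistical question about element orders in $S_n$ studied by Erd\H{o}s and Tur\'an, and refined estimates (of the kind underlying Dixon's proof in the excerpt) are needed to ensure the elliptic factors do not swamp the gain coming from the index $|A_n : M|$. Once these fixed-point/involution-type counts are pinned down with sufficient uniformity, the zeta-function machinery closes the argument; without them, the subgroup sum cannot be shown to vanish.
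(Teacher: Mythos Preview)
The paper does not contain a proof of this theorem: it is stated as \cite[Theorem 1.7]{LSh2004} and attributed to Liebeck and Shalev, with no argument given beyond the surrounding commentary that their methods are ``character-theoretic and probabilistic''. There is therefore nothing in this survey to compare your proposal against.

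That said, your sketch is broadly in the spirit of the Liebeck--Shalev approach: the character formula for $|{\rm Hom}(\Gamma,G)|$ (summed over irreducible characters, with the Euler characteristic of $\Gamma$ governing the exponent), the reduction to bounding homomorphisms into transitive proper subgroups, and the split into imprimitive versus primitive maximal subgroups are all genuine ingredients of their argument. Two points of caution. First, your displayed ``leading term'' is not quite the correct shape: the character formula expresses $|{\rm Hom}(\Gamma,G)|$ as a sum over tuples of conjugacy classes $(C_1,\ldots,C_d)$ with $C_i^{m_i}=1$, weighted by $\sum_{\chi}\prod_i |C_i|\chi(c_i)/\chi(1)$ times $|G|^{2g-1+s+t}$, and extracting the asymptotic requires careful control of the non-trivial characters via bounds on $\sum_{\chi\ne 1}\chi(1)^{-s}$ for $A_n$ --- this is where Liebeck and Shalev do substantial work. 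Second, the estimate you flag as the main obstacle (comparing $|\{u\in M:u^{m_i}=1\}|$ with the corresponding count in $A_n$) is indeed delicate, and in the genus-zero case ($g=s=t=0$, e.g.\ triangle groups) there is no ``free'' exponent $2g-1+s+t$ to absorb losses, so the elliptic factors carry the entire burden; handling this uniformly over all imprimitive wreath products is the technical heart of \cite{LSh2004} and is not something one can wave through.
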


Numerous extensions are pursued in \cite{LSh05}, where $A_n$ is replaced by  a different simple group. For example, the following striking result is \cite[Theorem 1.6]{LSh05}.

\begin{thm}
Let $\Gamma$ be a Fuchsian group of genus $g \geqs 2$ ($g \geqs 3$ if non-oriented), and let $G$ be a finite simple group. Then the probability that a randomly chosen homomorphism in ${\rm Hom}(\Gamma, G)$ is an epimorphism tends to $1$ as $|G| \to \infty$.
\end{thm}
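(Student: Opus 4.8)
\section*{Proof proposal}

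The plan is to marry the character-theoretic count of homomorphisms with the subgroup-counting philosophy already used for Dixon's conjecture. Write $\mathbb{P}$ for the probability in the statement, and let $\mathcal{M}$ denote the set of maximal subgroups of $G$. A homomorphism $\varphi \in {\rm Hom}(\Gamma, G)$ fails to be surjective precisely when its image lies in some $H \in \mathcal{M}$, and such a $\varphi$ is a homomorphism into $H$ composed with the inclusion. A union bound therefore gives
$$1 - \mathbb{P} \leqs \sum_{H \in \mathcal{M}} \frac{|{\rm Hom}(\Gamma, H)|}{|{\rm Hom}(\Gamma, G)|},$$
so the whole problem reduces to estimating $|{\rm Hom}(\Gamma, X)|$ for $X = G$ and for the maximal subgroups $X = H$, and then showing the right-hand side tends to $0$.

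For the count itself I would use the Frobenius formula. Reading the Fuchsian presentation, a homomorphism assigns arbitrary images to the $2g$ hyperbolic generators $a_i,b_i$ and to the $s+t$ generators $y_j,z_k$, an image of order dividing $m_i$ to each $x_i$, all subject to the single long relation. Expanding the number of solutions of that relation over ${\rm Irr}(G)$ yields an expression of the shape
$$|{\rm Hom}(\Gamma, G)| = |G|^{2g-1+s+t} \prod_{i=1}^{d} |\{x \in G : x^{m_i}=1\}| \cdot \sum_{\chi \in {\rm Irr}(G)} \frac{\prod_{i=1}^{d} \theta_{m_i}(\chi)}{\chi(1)^{2g-2+d}},$$
where the $\theta_{m_i}(\chi)$ are normalised character sums equal to $1$ at the principal character (in the non-oriented case the commutators are replaced by squares and Frobenius--Schur indicators appear, which is exactly why the threshold rises to $g \geqs 3$). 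The crucial point is that the genus hypothesis forces the exponent $2g-2+d \geqs 2$, so the character sum is a Witten-type zeta function $\sum_\chi \chi(1)^{-s}$ with $s \geqs 2 > 1$. For a finite simple group $G$ the nonprincipal characters have large degree, so this sum is dominated by the contribution of $\chi = 1$ and tends to $1$ as $|G| \to \infty$. Hence $|{\rm Hom}(\Gamma, G)|$ is asymptotic to its principal term, which is roughly of order $|G|^{\mu(\Gamma)+1}$ with $\mu(\Gamma) = 2g-2+s+t+\sum_{i}(1-1/m_i)$ the measure of $\Gamma$.

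The final and most delicate step is to bound each ratio and sum over $\mathcal{M}$. Applying the same formula to $H$ and dividing, the leading behaviour is $|G:H|^{-(\mu(\Gamma)+1)+o(1)}$ with $\mu(\Gamma)+1 > 1$, so the sum is controlled by $\zeta_G(s)$ for a suitable $s>1$, which tends to $0$ by \cite[Theorem 2.1]{LSh} as recalled above. The obstacle, and where the genuine work lies, is that the torsion factors $|\{x \in H : x^{m_i}=1\}|$ can be disproportionately large relative to $|H|$ for certain maximal subgroups rich in elements of small order, so the clean comparison with $|G:H|$ breaks down for a bounded collection of subgroup types. To handle these I would partition $\mathcal{M}$ using the structural theorems (the O'Nan--Scott theorem when $G=A_n$ and Aschbacher's theorem \cite{asch} for classical groups, together with the known maximal subgroups of the exceptional and sporadic groups), bound the torsion counts within each class separately, and exploit the surplus factor $|G|^{2g-2}$ coming from the hyperbolic generators, available precisely because $g \geqs 2$, to absorb any excess. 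This case analysis, rather than the character theory, is the hard part.
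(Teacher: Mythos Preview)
The present survey does not prove this theorem: it is quoted as \cite[Theorem 1.6]{LSh05} with no argument given, so there is no proof in the paper to compare your proposal against. That said, your outline is an accurate sketch of the strategy Liebeck and Shalev actually use in \cite{LSh05}. The character-theoretic homomorphism count, the appearance of the Witten-type zeta function $\sum_\chi \chi(1)^{-s}$ (controlled because the genus hypothesis makes the exponent at least $2$), the asymptotic $|{\rm Hom}(\Gamma,G)| \sim |G|^{\mu(\Gamma)+1}$, and the reduction to the maximal-subgroup zeta function $\zeta_G(s)$ for some $s>1$ are precisely the ingredients of their proof. You have also correctly located the real difficulty: the torsion counts $|\{x \in H : x^{m_i}=1\}|$ need not scale with $|H|$, and controlling these across the various families of maximal subgroups (via O'Nan--Scott, Aschbacher's theorem, and the known subgroup structure of exceptional and sporadic groups) is where most of the effort in \cite{LSh05} and its companion \cite{LSh2004} goes. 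Your observation that the hypothesis $g \geqs 2$ (resp.\ $g \geqs 3$ in the non-oriented case, where Frobenius--Schur indicators enter) supplies the surplus factor of $|G|^{2g-2}$ needed to absorb these terms is exactly the reason the genus condition is required.
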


The condition on the genus here is essential, since there are Fuchsian groups of genus $0$ or $1$ which do not have all large enough finite simple groups as quotients. Indeed, we have already seen that if $(a,b,c)$ is a hyperbolic triple of primes then ${\rm L}_{2}(p^r)$ is a quotient of $T_{a,b,c}$ for just one value of $r$. For arbitrary Fuchsian groups, the following conjecture is still open (see \cite[p.323]{LSh05}).

\begin{con}[Liebeck \& Shalev, 2005]
\emph{For any Fuchsian group $\Gamma$ there is an integer $f(\Gamma)$, such that if $G$ is a finite simple classical group of rank at least $f(\Gamma)$, then the probability that a randomly chosen homomorphism in ${\rm Hom}(\Gamma, G)$ is an epimorphism tends to $1$ as $|G| \to \infty$.}
\end{con}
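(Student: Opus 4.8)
The natural line of attack is to combine the subgroup sieve used repeatedly in this section with the character-theoretic homomorphism counts underlying the genus $\geqs 2$ theorem of Liebeck and Shalev. Write $\Gamma$ in the displayed presentation, with genus $g$, periods $m_1, \ldots, m_d$ and $s+t$ further generators, and set
\[
\mu(\Gamma) = 2g-2 + \sum_{i=1}^{d}\Big(1-\frac{1}{m_i}\Big) + s + t,
\]
so that $\Gamma$ is hyperbolic precisely when $\mu(\Gamma)>0$. Since $G$ is simple, a homomorphism $\v \in {\rm Hom}(\Gamma,G)$ fails to be surjective if and only if $\v(\Gamma)$ lies in a maximal subgroup, so with $\mathcal{M}$ the set of all maximal subgroups of $G$ we have
\[
1 - \frac{|\{\v \in {\rm Hom}(\Gamma,G)\,:\,\v(\Gamma)=G\}|}{|{\rm Hom}(\Gamma,G)|} \leqs \frac{1}{|{\rm Hom}(\Gamma,G)|}\sum_{H \in \mathcal{M}}|{\rm Hom}(\Gamma,H)|.
\]
The plan is to show that this upper bound tends to $0$ as $|G|\to\infty$, provided ${\rm rank}(G) \geqs f(\Gamma)$.

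For the denominator I would use the Frobenius--Mednykh formula: writing $n_m(X)$ for the number of solutions of $x^m=1$ in a group $X$, the contribution of the trivial character to $|{\rm Hom}(\Gamma,G)|$ is $|G|^{2g-1+s+t}\prod_{i=1}^{d}n_{m_i}(G)$, and when $s+t\geqs 1$ one free generator absorbs the defining relation and this expression is exact. For a group of Lie type over $\F$ one has $n_m(G)=q^{j_m(\bar G)(1+o(1))}$, where $\bar G$ is the ambient simple algebraic group and $j_m$ is the dimension invariant of Section \ref{s:tri}; crucially, $j_m(\bar G)/\dim\bar G \to 1-1/m$ as the rank grows. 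Hence, once ${\rm rank}(G)\geqs f(\Gamma)$ is large enough relative to the (fixed, bounded) periods $m_i$, the main term is $|G|^{1+\mu(\Gamma)+o(1)}$; in the cocompact case $s=t=0$ one must still bound the remaining irreducible characters using the Witten zeta function $\sum_\chi \chi(1)^{-(2g-2+d)}$ together with bounds on character values at elements of prime order, exactly as in \cite{LSh2004,LSh05}.

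Applying the same formula to each $H \in \mathcal{M}$ produces a main term $|H|^{2g-1+s+t}\prod_i n_{m_i}(H)$. If one could establish the clean bound $n_{m_i}(H)\leqs |H|^{1-1/m_i}$ for every maximal subgroup $H$, the sieve would collapse to
\[
\sum_{H \in \mathcal{M}}\Big(\frac{|H|}{|G|}\Big)^{1+\mu(\Gamma)} = \sum_{H \in \mathcal{M}}|G:H|^{-(1+\mu(\Gamma))} = \zeta_G(1+\mu(\Gamma)),
\]
which tends to $0$ by \cite[Theorem 2.1]{LSh} because $1+\mu(\Gamma)>1$. This is precisely the mechanism behind the genus $\geqs 2$ theorem, where the large value of $\mu$ also leaves a comfortable margin to absorb any excess in the element counts. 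The role of the rank bound $f(\Gamma)$ is to recover the generic proportions $j_{m_i}(\bar G)/\dim\bar G \approx 1-1/m_i$ (which fail in small rank, and are the genuine source of counterexamples such as the single value of $r$ for which ${\rm L}_2(p^r)$ is a quotient of $T_{a,b,c}$) and to restrict, via Aschbacher's theorem, to finitely many controllable families of maximal subgroups.

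The main obstacle is that the clean bound $n_{m_i}(H)\leqs |H|^{1-1/m_i}$ can fail: certain maximal subgroups carry a disproportionate number of elements of small order $m_i$. Imprimitive and monomial subgroups in Aschbacher's class $\mathcal{C}_2$ are rich in involutions and in elements of order $3$ coming from their symmetric-group factors, orthogonal-type subgroups abound in reflections, and the almost simple subgroups in class $\mathcal{S}$ resist uniform element-counting altogether. For such $H$ one must either show that the index $|G:H|$ is large enough to absorb the excess factor $\prod_i |H|^{1/m_i}$, or extract genuine cancellation from the character sum rather than relying on the trivial-character main term. When $0<\mu(\Gamma)\leqs 1$ --- the small-measure regime excluded by the genus $\geqs 2$ theorem and including all triangle groups --- this margin is razor-thin, and carrying out the dichotomy uniformly across all classical types, all $q$, and all such $\Gamma$ is exactly the difficulty that leaves the conjecture open.
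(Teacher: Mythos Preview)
The statement you are attempting to prove is presented in the paper as an \emph{open conjecture}: the paper explicitly says ``the following conjecture is still open'' and offers no proof. There is therefore nothing in the paper to compare your argument against, and indeed no complete argument is known.

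Your write-up is not a proof and you say as much in the final paragraph: you correctly identify that the bound $n_{m_i}(H)\leqs |H|^{1-1/m_i}$ fails in general (for $\mathcal{C}_2$ subgroups, orthogonal-type subgroups, subgroups in class $\mathcal{S}$, etc.), and that in the small-measure regime $0<\mu(\Gamma)\leqs 1$ there is no margin to absorb the excess. That \emph{is} the gap, and it is a genuine one --- the whole content of the conjecture lies precisely in the cases your sketch cannot handle. The outline you give (Frobenius--Mednykh counting, the zeta-function sieve $\zeta_G(1+\mu(\Gamma))$, Aschbacher's theorem to organise the maximal subgroups, a rank threshold $f(\Gamma)$ to force $j_{m_i}(\bar G)/\dim\bar G$ close to $1-1/m_i$) is the natural strategy and is faithful to the methods of \cite{LSh2004,LSh05}, but what you have written is a plausibility argument for why the conjecture should be true together with an honest diagnosis of why it remains open, not a proof.
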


\section{Spread}\label{s:spread}

In the previous section, we highlighted several strong $2$-generation properties of simple groups, which can be viewed as far-reaching generalisations of Theorem \ref{t:sd2}. In this section we study the notions of spread and uniform spread, which provide yet another way to demonstrate the effortless $2$-generation of simple groups. 

\subsection{Definitions}

We begin with the following definition, which was introduced by Brenner and Wiegold \cite{BW} in the 1970s. 

\begin{de}
Let $G$ be a finite group and let $k$ be a positive integer. Then $G$ has \emph{spread} $k$ if for any non-identity elements $x_1, \ldots, x_k \in G$ there exists $y \in G$ such that $G = \la x_i,y\ra$ for all $i$. We say that $G$ is \emph{$\frac{3}{2}$-generated} if it has spread $1$.
\end{de}

One of the main motivations stems from earlier work of Binder \cite{Bin2}, who proved that $S_n$ has spread $2$ for all $n \geqs 5$ (in fact, there is an even earlier result of Piccard \cite{Pic} from the 1930s, which states that both $A_n$ and $S_n$ are $\frac{3}{2}$-generated if $n \geqs 5$). 

Notice that every cyclic group has spread $k$ for all $k \in \mathbb{N}$, so for the remainder of Section \ref{s:spread} we will  assume $G$ is non-cyclic. Set
$$s(G) = \max\{k \in \mathbb{N}_{0} \, : \, \mbox{$G$ has spread $k$}\}.$$
In practice, it can often be more convenient to work with the more restrictive notion of \emph{uniform spread}, which was formally introduced much more recently in \cite{BGK}.

\begin{de}
A finite group $G$ has \emph{uniform spread} $k$ if there exists a fixed conjugacy class $C$ of $G$ such that for any non-identity elements $x_1, \ldots, x_k \in G$ there exists $y \in C$ such that $G = \la x_i,y\ra$ for all $i$.
\end{de}

For a non-cyclic group $G$, set 
$$u(G) = \max\{k \in \mathbb{N}_{0} \, : \, \mbox{$G$ has uniform spread $k$}\}$$
and observe that $u(G) \leqs s(G) < |G|-1$. Note that the first inequality can be strict; for 
example, $u(S_6)=0$ and $s(S_6)=2$. 

\subsection{The spread of simple groups}\label{ss:spreadd}

In \cite{BW}, Brenner and Wiegold extend the earlier work of Binder and Piccard by investigating the spread of various families of simple groups. Among several interesting results, they prove that $s(A_{2n})=4$ for all $n \geqs 4$ and they show that 
$$s({\rm L}_{2}(q)) = \left\{\begin{array}{ll}
q-1 & \mbox{if $q \equiv 1 \imod{4}$} \\
q-4 & \mbox{if $q \equiv 3 \imod{4}$} \\
q-2 & \mbox{if $q$ is even} 
\end{array}\right.$$
for $q \geqs 11$ (see \cite[Theorems 3.10 and 4.02]{BW}). In particular, the spread of a finite simple group can be arbitrarily large. 

They also observe that the spread of odd degree alternating groups is radically different. For instance, \cite[Theorem 4.01]{BW} states that 
\begin{equation}\label{e:a19}
6098892799 \leqs s(A_{19}) \leqs  6098892803.
\end{equation}
Later work by Guralnick and Shalev \cite{GSh} shows that $s(A_{p})$ tends to infinity with $p$ when $p$ is a prime number. More generally, \cite[Theorem 1.1]{GSh} implies that if $(G_i)$ is a sequence of alternating groups such that $G_i = A_{n_i}$ and $n_i$ tends to infinity with $i$, then $s(G_i) \to \infty$ if and only if $f(n_i) \to \infty$, where $f(n_i)$ is the smallest prime divisor of $n_i$. 

\spc

In Steinberg's original paper \cite{St}, where he presents a generating pair for each simple group of Lie type, he suggests that these groups may have the much stronger $\frac{3}{2}$-generation property (he is aware of Piccard's result for alternating groups). Steinberg's prediction was eventually verified almost 40 years later (in a stronger form) by Stein \cite{Stein}, and independently by Guralnick and Kantor \cite{GK}.

\begin{thm}\label{t:ug1}
If $G$ is a non-abelian finite simple group, then $u(G) \geqs 1$.
\end{thm}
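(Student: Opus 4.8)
The plan is to establish this via the probabilistic method, using fixed point ratio estimates together with the subgroup structure of $G$ furnished by the Classification. Fix a conjugacy class $C = y^G$. For a non-identity element $x \in G$, a random $y \in C$ fails to generate $G$ together with $x$ precisely when $\la x, y \ra$ lies in some maximal subgroup $H$; since such an $H$ must then contain $x$, this forces $H \in \mathcal{M}(x)$ and $y \in H$. A union bound therefore gives
\begin{equation}
\frac{|\{y \in C \,:\, \la x,y\ra \ne G\}|}{|C|} \leqs \sum_{H \in \mathcal{M}(x)} \frac{|C \cap H|}{|C|} = \sum_{H \in \mathcal{M}(x)} {\rm fpr}(y, G/H),
\end{equation}
where I have used the standard identity $|C \cap H|/|C| = {\rm fpr}(y,G/H)$ relating the proportion of $C$ lying in $H$ to the fixed point ratio of $y$ on the coset space $G/H$. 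If I can exhibit a \emph{single} class $C$ for which the right-hand side is strictly less than $1$ for every non-identity $x$, then for each such $x$ there is some $y \in C$ with $\la x, y \ra = G$, which is exactly the assertion $u(G) \geqs 1$.

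The crux is the choice of $C$. I would take $y$ to be a carefully selected element of large order: when $G$ is of Lie type, a regular semisimple element lying in a cyclic maximal torus, with $|y|$ divisible by a primitive prime divisor of $q^e-1$ for a well-chosen $e$; for alternating groups, an element with a single long cycle of suitable length. The advantage of such a choice is twofold. First, the maximal subgroups $H$ meeting $C$ (those with $C \cap H \ne \emptyset$) are severely restricted, so for any given $x$ only a handful of terms in the sum are nonzero. Second, for each such $H$ the fixed point ratio ${\rm fpr}(y,G/H)$ is small. Identifying these overgroups is where the subgroup structure theorems enter: Aschbacher's theorem \cite{asch} and the classification of subgroups containing elements of prime order (as in \cite{GPPS}) for the classical groups, the analogous results of Liebeck--Seitz for the exceptional groups, and the O'Nan--Scott theorem together with combinatorial estimates for the alternating and symmetric groups.

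With $C$ fixed, the argument proceeds family by family via the Classification. For each non-identity $x$ I would bound $\sum_{H \in \mathcal{M}(x)} {\rm fpr}(y,G/H)$ using strong fixed point ratio estimates of Liebeck--Saxl type, organising the maximal overgroups according to their Aschbacher class when $G$ is classical, and treating separately the various element types for $x$. The delicate cases are the elements $x$ of small support or small order, since these lie in the greatest number of maximal subgroups and contribute the largest individual fixed point ratios.

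The main obstacle is the classical groups, where the interplay between the many Aschbacher classes and the full range of possible $x$ makes the estimate intricate, and where the generic bounds are expected to fail for a bounded list of low-dimensional groups; these, along with the sporadic groups, would have to be settled separately by explicit computation or ad hoc arguments rather than by the uniform probabilistic estimate.
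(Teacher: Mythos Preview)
Your overall strategy---choose a special conjugacy class $C = y^G$, bound the failure probability by a sum of fixed point ratios, and verify the bound is below $1$ family by family using the Classification and subgroup structure theorems---is precisely the Guralnick--Kantor approach the paper describes via Lemmas~\ref{lp1} and~\ref{c:mgen}. But you have the roles of $x$ and $y$ transposed in the key inequality, and this creates a genuine obstacle in your plan.

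The paper (Lemma~\ref{c:mgen}) bounds
\[
Q(x,y) \leqs \sum_{H \in \mathcal{M}(y)} {\rm fpr}(x, G/H),
\]
summing over the maximal overgroups of the \emph{fixed} element $y$, whereas your inequality sums over $\mathcal{M}(x)$ and uses ${\rm fpr}(y, G/H)$. The two bounds are in fact equal: grouping by the $G$-conjugacy class of $H$, each class represented by $H_0$ contributes ${\rm fpr}(x,G/H_0)\cdot{\rm fpr}(y,G/H_0)\cdot[G:H_0]$ to both sums. So your target inequality is correct. What fails is your route to it. Your claim that ``for any given $x$ only a handful of terms in the sum are nonzero'' is wrong: the choice of $y$ restricts the \emph{conjugacy classes} of maximal subgroups meeting $C$, not the number of individual subgroups in $\mathcal{M}(x)$ that meet $C$. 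When $x$ is a $3$-cycle in $A_n$, or a long root element in a group of Lie type, $\mathcal{M}(x)$ contains a huge number of subgroups of each permitted type, and you offer no mechanism for controlling their total contribution.

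The point of summing over $\mathcal{M}(y)$ instead is exactly that this set does not depend on $x$ and, for a well-chosen $y$, is explicitly determinable and tiny---often a single subgroup, as in the examples following Lemma~\ref{c:mgen}. One then needs only uniform bounds on ${\rm fpr}(x,G/H)$ over prime-order $x$ for each of these few $H$, and this is what the Liebeck--Saxl type estimates supply. Swapping back to $\mathcal{M}(y)$ repairs your argument and recovers the paper's proof.
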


Stronger results are established in \cite{GK}, which turn out to be important for subsequent improvements of the bound in Theorem \ref{t:ug1}. More precisely, it is shown that there is a conjugacy class $C$ of $G$ such that each non-identity element of $G$ generates $G$ with at least $1/10$ of the elements in $C$, and they also establish some related results for almost simple groups. In later work \cite{BGK}, Breuer, Guralnick and Kantor show that the constant $1/10$ can be improved to $13/42$ (for $G = \O_8^{+}(2)$, this is best possible), and by excluding a short list of known groups, $1/10$ can be replaced by $2/3$. 

As we shall see below, the fact that the above fraction $1/10$ can be replaced by $2/3$ in almost all cases is the key ingredient in the proof of the following theorem, which is the main result on the spread of simple groups (see \cite[Corollary 1.3]{BGK}).

\begin{thm}\label{t:bgk}
Let $G$ be a non-abelian finite simple group. Then $u(G) \geqs 2$, with equality if and only if
$G \in \{A_5, A_6, \O_{8}^{+}(2), {\rm Sp}_{2m}(2) \, (m \geqs 3)\}$.
\end{thm}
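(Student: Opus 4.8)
The plan is to convert the uniform-spread condition into a single generation-probability estimate and then feed in the strong fixed-point-ratio bounds recorded above. For a non-identity $x \in G$ and a conjugacy class $C$, write $P(x,C)$ for the proportion of $y \in C$ with $\la x,y\ra = G$. The crucial elementary observation is a union bound: for non-identity elements $x_1,\dots,x_k$, the proportion of $y \in C$ failing to generate $G$ with at least one $x_i$ is at most $\sum_{i=1}^{k}(1-P(x_i,C))$. Hence, if we can exhibit one fixed class $C$ with
\[
P(x,C) > 1 - \frac{1}{k} \quad \mbox{for every } 1 \neq x \in G,
\]
then some $y \in C$ works simultaneously for all $x_i$, giving $u(G) \geqs k$. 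The whole problem thus reduces to producing a single class $C$ and bounding $P(x,C)$ below, uniformly in $x$.

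For the lower bound $u(G) \geqs 2$ I would take $k=2$, so the target is $P(x,C) > 1/2$. For every simple group outside the short excluded list this is immediate from the improved estimate quoted above, namely that a suitable class $C$ satisfies $P(x,C) \geqs 2/3 > 1/2$; and for all sufficiently large $G$ one in fact has $\min_{x \neq 1} P(x,C) \to 1$, which pushes the same argument through with $k=3$ and yields the sharper conclusion $u(G) \geqs 3$ away from the exceptional families. Establishing that the bound is \emph{strict} outside the listed groups (so that $3(1-P)<1$) is exactly what separates $u(G)=2$ from $u(G)\geqs 3$, and this is where the precise fixed-point-ratio estimates of \cite{BGK}, refining the constant $1/10$ of \cite{GK}, do the real work.

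The exceptional groups demand individual attention from both sides. For $A_5$, $A_6$ and $\O_{8}^{+}(2)$ the matching bounds $u(G) \geqs 2$ and $u(G) \leqs 2$ can be verified by direct computation with character tables and explicit class data; note that the universal constant $13/42$, attained exactly by $\O_{8}^{+}(2)$, is smaller than $1/2$, so the crude union bound does \emph{not} by itself establish even $u \geqs 2$ here, and a finer verification that a chosen class meets every pair is required. For the infinite family ${\rm Sp}_{2m}(2)$ the $2/3$ estimate also fails, so one must select $C$ carefully—a class of elements of suitable (large) order—and prove a bespoke bound $P(x,C)>1/2$ via fixed-point ratios, using Aschbacher's theorem \cite{asch} to control the maximal subgroups containing a fixed $x$.

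The main obstacle, and the part demanding the most care, is the upper bound $u(G) \leqs 2$ for the infinite family ${\rm Sp}_{2m}(2)$: one must show that \emph{no} class $C$ can achieve spread $3$. The natural route is a structural obstruction arising from the two families of orthogonal subgroups $O_{2m}^{+}(2)$ and $O_{2m}^{-}(2)$, of indices $2^{m-1}(2^m+1)$ and $2^{m-1}(2^m-1)$. For any candidate class $C$ one selects three non-identity elements $x_1,x_2,x_3$ (suitable involutions) so that every $y \in C$ shares one of these maximal subgroups with at least one $x_i$, forcing $\la x_i,y\ra \neq G$ for some $i$. Making this selection work uniformly in $m$ and independently of $C$ is the crux: it rests on a double-counting argument exploiting that the proportions $\tfrac{1}{2}\pm 2^{-(m+1)}$ of plus-type and minus-type forms both tend to $1/2$, leaving no room for a common generator, which is precisely why the uniform spread of these groups stalls at $2$.
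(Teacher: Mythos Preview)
Your outline follows the paper's strategy: the union bound you state is exactly Lemma~\ref{lp1}, and bounding $1-P(x,C)$ via fixed point ratios over the maximal overgroups of a chosen class representative is Lemma~\ref{c:mgen}. Deducing $u(G)\geqs 3$ from the $2/3$ estimate away from the exceptional list, and handling the listed groups separately, is precisely how \cite{BGK} proceeds.

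Two corrections. First, in your ${\rm Sp}_{2m}(2)$ lower-bound paragraph you write ``control the maximal subgroups containing a fixed $x$''. This is backwards: the method controls $\mathcal{M}(y)$, the maximal overgroups of the \emph{class representative}, not of the arbitrary element $x$. Choosing $y$ of large order is done precisely so that $\mathcal{M}(y)$ is short and the sum in Lemma~\ref{c:mgen} has few terms; one then bounds ${\rm fpr}(x,G/H)$ uniformly over prime-order $x$ for each such $H$. Swapping the roles would leave you summing over $\mathcal{M}(x)$ for an involution $x$, which is typically enormous.

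Second, your sketch of the upper bound $u({\rm Sp}_{2m}(2))\leqs 2$ locates the right obstruction (the two orthogonal families $O_{2m}^{\pm}(2)$) but the mechanism you describe --- a double-count relying on the proportions $\tfrac{1}{2}\pm 2^{-(m+1)}$ tending to $\tfrac{1}{2}$ --- is not an argument: nothing about those global proportions forces, for a \emph{given} $y$, that three prescribed involutions cover all its orthogonal overgroups. The proof in \cite{BGK} is not asymptotic. One writes down three explicit transvections $x_1,x_2,x_3$ and shows by a direct calculation with quadratic forms (valid for every $m\geqs 3$, and independent of the candidate class $C$) that every $y\in G$ lies together with some $x_i$ in a common $O_{2m}^{\epsilon}(2)$. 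There is no limiting step, and the choice of the $x_i$ does not depend on $C$.
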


The proof of Theorem \ref{t:bgk} uses probabilistic methods, based on fixed point ratio estimates. To describe the main ideas, we need some notation.

Let $G$ be a finite group. For $x,y \in G$, let 
$$\mathbb{P}(x,y)= \frac{|\{z \in y^G \, : \, G=\langle x,z \rangle \}|}{|y^G|}$$
be the probability that $x$ and a randomly chosen conjugate of $y$ generate $G$. Set 
$$Q(x,y) = 1- \mathbb{P}(x,y).$$
For a subgroup $H \leqs G$ and element $x \in G$, let 
$${\rm fpr}(x,G/H) = \frac{|x^G \cap H|}{|x^G|}$$
be the \emph{fixed point ratio} of $x$. This is the proportion of fixed points of $x$ under the natural action of $G$ on the set of cosets of $H$ in $G$. Notice that ${\rm fpr}(x,G/H) \leqs {\rm fpr}(x^m,G/H)$ for all $m \in \mathbb{N}$.

\begin{lemma}\label{lp1}
Suppose there exists an element $y \in G$ and a positive integer $k$ such that 
$Q(x,y)<1/k$ for all $1 \ne x \in G$. Then $u(G) \geqs k$.
\end{lemma}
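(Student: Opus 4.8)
The plan is to use the conjugacy class $C = y^G$ as the witness for uniform spread $k$, and to extract a single common conjugate $z$ that generates $G$ together with each of the prescribed elements via a union bound. The hypothesis gives us a \emph{uniform} upper bound $Q(x,y) < 1/k$ valid for every non-identity $x$, and the role of this uniformity is precisely to let us fix $C$ once and for all, independently of the elements $x_1, \ldots, x_k$ we are handed.

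First I would fix arbitrary non-identity elements $x_1, \ldots, x_k \in G$ and, for each index $i$, record the set of ``bad'' conjugates
\[
B_i = \{z \in y^G \,:\, G \ne \la x_i, z \ra\}
\]
of $y$ that fail to generate $G$ alongside $x_i$. By the definitions of $\mathbb{P}$ and $Q$, the proportion of such bad conjugates is
\[
\frac{|B_i|}{|y^G|} = Q(x_i,y) < \frac{1}{k},
\]
where the strict inequality is exactly the hypothesis applied to $x_i \ne 1$. A conjugate $z \in y^G$ can fail to generate $G$ with at least one of the $x_i$ only if it lies in the union $\bigcup_{i=1}^{k} B_i$, so by the union bound
\[
\frac{1}{|y^G|}\left|\bigcup_{i=1}^{k} B_i\right| \leqs \sum_{i=1}^{k} \frac{|B_i|}{|y^G|} = \sum_{i=1}^{k} Q(x_i,y) < k \cdot \frac{1}{k} = 1.
\]

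Since this proportion is strictly less than $1$, the complement $y^G \setminus \bigcup_{i=1}^{k} B_i$ is non-empty, so there exists $z \in y^G$ with $G = \la x_i, z \ra$ for every $i$. As the elements $x_1, \ldots, x_k$ were arbitrary and the class $C = y^G$ was fixed in advance, this is precisely the assertion that $G$ has uniform spread $k$, whence $u(G) \geqs k$. The argument is entirely elementary once the probabilistic reformulation is granted; there is no genuine obstacle beyond the union bound. The only point that warrants care—and the one on which the entire method for Theorem~\ref{t:bgk} ultimately rests—is that all the real difficulty is pushed into \emph{establishing} the uniform estimate $Q(x,y) < 1/k$, which is where the fixed point ratio bounds enter; Lemma~\ref{lp1} itself merely converts such an estimate into a spread statement.
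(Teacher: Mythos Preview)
Your argument is correct and is essentially identical to the paper's own proof: both fix the class $y^G$, apply the union bound to the ``bad'' events $\{z \in y^G : G \ne \la x_i, z\ra\}$ (the paper writes these as the complements $\bar{E}_i$ of the generating events), and conclude that a good conjugate $z$ exists because the total bad proportion is strictly less than $1$. The only difference is cosmetic, with the paper phrasing the count in terms of probabilities of events and you in terms of sizes of bad sets.
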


\begin{proof}
Let $x_1, \ldots, x_k \in G$ be non-identity elements and set $E = E_1 \cap \cdots \cap E_k$, where $E_i$ is the event that $G=\langle x_i,z \rangle$ for a randomly chosen conjugate $z \in y^G$. Then
\begin{align*}
 \mathbb{P}(E) = 1-\mathbb{P}(\bar{E}) & =  1- \mathbb{P}(\bar{E}_1 \cup \cdots \cup \bar{E}_k) \\
 & \geqs 1- \sum_{i=1}^k \mathbb{P}(\bar{E}_i) = 1- \sum_{i=1}^k Q(x_i,y) >1 - k \cdot \frac{1}{k} = 0
 \end{align*}
and the result follows.
\end{proof}

Notice that if we can find a conjugacy class $C = y^G$ with the property that each non-identity element of $G$ generates $G$ with at least $2/3$ of the elements in $C$, then $Q(x,y)<1/3$ for all $1 \ne x \in G$ and thus $u(G) \geqs 3$ by Lemma \ref{lp1}. This is the main strategy adopted in the proof of Theorem \ref{t:bgk}.  

In order to effectively apply Lemma \ref{lp1}, we need to be able to estimate the probability $Q(x,y)$. Here the key result is the following lemma (as before, we define $\mathcal{M}(y)$ to be the set of maximal subgroups of $G$ containing $y$).  

\begin{lemma}\label{c:mgen}
For $x,y \in G$, we have  
$$Q(x,y) \leqs \sum_{H \in \mathcal{M}(y)}{\rm fpr}(x,G/H).$$
\end{lemma}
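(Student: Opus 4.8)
The plan is to reduce the statement to two ingredients: the elementary fact that $\la x,z\ra \neq G$ precisely when $x$ and $z$ lie in a common maximal subgroup of $G$ (every proper subgroup of the finite group $G$ lies in a maximal one), together with the symmetry relation $Q(x,y) = Q(y,x)$. The symmetry is what lets me exchange the roles of the fixed element and the element being conjugated, so that the maximal subgroups controlling the bad event become exactly those in $\mathcal{M}(y)$ and the relevant fixed point ratio becomes that of $x$. Granting these two points, the bound drops out of a single union bound.

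First I would establish the symmetry $\mathbb{P}(x,y) = \mathbb{P}(y,x)$ (equivalently $Q(x,y) = Q(y,x)$) by double counting the set $T = \{(a,b) \in x^G \times y^G \,:\, G = \la a,b\ra\}$. Fixing $a = x^g$ and using that $\la x^g, b\ra = G$ if and only if $\la x, b^{g^{-1}}\ra = G$, while $b \mapsto b^{g^{-1}}$ permutes $y^G$, the number of good $b$ for each $a$ equals $\mathbb{P}(x,y)\,|y^G|$ independently of $a$; hence $|T| = |x^G|\,\mathbb{P}(x,y)\,|y^G|$. Counting $T$ instead by fixing $b \in y^G$ gives $|T| = |y^G|\,\mathbb{P}(y,x)\,|x^G|$, and comparing the two expressions yields $\mathbb{P}(x,y) = \mathbb{P}(y,x)$.

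With symmetry in hand, I would rewrite $Q(x,y) = Q(y,x)$ as the proportion of $w \in x^G$ for which $\la y, w\ra \neq G$. Every such $w$ lies together with $y$ in some maximal subgroup $M$, so that $M \in \mathcal{M}(y)$ and $w \in x^G \cap M$; thus the bad set is contained in $\bigcup_{M \in \mathcal{M}(y)}(x^G \cap M)$. Dividing the resulting union bound by $|x^G|$ and recognising $|x^G \cap M|/|x^G| = {\rm fpr}(x,G/M)$ gives the claim. The only genuinely substantive point is the symmetry step, and within it the one thing requiring care is the verification that the reindexing $b \mapsto b^{g^{-1}}$ is a bijection of $y^G$; the remainder is a union bound and the definition of the fixed point ratio. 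If one prefers to avoid invoking symmetry altogether, the same conclusion follows from the dual union bound over $\mathcal{M}(x)$ combined with the identity $\sum_{M \in \mathcal{M}(x)}{\rm fpr}(y,G/M) = \sum_{M \in \mathcal{M}(y)}{\rm fpr}(x,G/M)$, which is merely a repackaging of the same double count.
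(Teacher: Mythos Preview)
Your proof is correct and follows essentially the same route as the paper. The paper compresses the symmetry step into a single line---observing that $\langle x, z\rangle \neq G$ for $z \in y^G$ amounts to $\langle x', y\rangle \leqs H$ for some $x' \in x^G$ and $H \in \mathcal{M}(y)$, and then applying the union bound over $\mathcal{M}(y)$ with $\mathbb{P}_x(H) = {\rm fpr}(x,G/H)$---whereas you unpack this into an explicit double count establishing $Q(x,y)=Q(y,x)$ before bounding; the underlying idea is the same.
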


\begin{proof}
If $z \in y^G$ then $G \neq \la x,z \ra$ if and only if $\la x',y \ra \leqs H$ for some $x' \in x^G$ and $H \in \mathcal{M}(y)$. Therefore, 
$$Q(x,y) \leqs \sum_{H \in \mathcal{M}(y)}\mathbb{P}_{x}(H),$$
where
$$\mathbb{P}_{x}(H)= \frac{|x^G \cap H|}{|x^G|} = {\rm fpr}(x,G/H)$$
is the probability that a random conjugate of $x$ lies in $H$. The result follows.
\end{proof}

If we can find an element $y \in G$ and a positive integer $k$ such that 
$$\sum_{H \in \mathcal{M}(y)}{\rm fpr}(x,G/H) < \frac{1}{k}$$
for all $x \in G$ of prime order, then by combining Lemmas \ref{lp1} and \ref{c:mgen} we deduce that $u(G) \geqs k$. To do this effectively, we need to identify an element $y \in G$ that is contained in very few maximal subgroups of $G$, and we need to be able to 
determine the subgroups in $\mathcal{M}(y)$. We then require upper bounds on the appropriate fixed point ratios for elements of prime order. Such bounds are useful in many different contexts and there is an extensive literature to draw upon. For example, if $G$ is a group of Lie type over $\mathbb{F}_q$ then there is the general upper bound ${\rm fpr}(x,G/H) \leqs 4/3q$ due to Liebeck and Saxl \cite{LSax} (with prescribed exceptions). See \cite{Bur1, LLS} and 
\cite[Section 3]{GK} for stronger bounds in special cases.

Notice that there is some considerable flexibility in this approach. There is not always an obvious candidate for $y$, and in practice there may be many valid possibilities (although some choices will require more work than others in estimating the upper bound in Lemma \ref{c:mgen}). 

To illustrate some of the main ideas in the proof of Theorem \ref{t:bgk}, let us look at three examples.

\begin{ex}
Suppose $G = A_n$, where $n \geqs 8$ is even. We will use Lemma \ref{c:mgen} to show that $u(G) \geqs 3$ (recall that $s(G)=4$ by \cite[Theorem 3.10]{BW}). Set $n=2m$, $k=m-(2,m-1)$ and 
$$y = (1,2, \ldots, k)(k+1, \ldots, n) \in G.$$
First we determine the maximal overgroups in $\mathcal{M}(y)$. To do this, suppose 
$H \in \mathcal{M}(y)$ and consider the action of $H$ on $\{1, \ldots, n\}$. If $H$ is intransitive, then it is clear that $H = (S_k \cap S_{n-k}) \cap G$ is the only possibility (that is, $H$ has to be the setwise stabiliser in $G$ of $\{1, \ldots, k\}$). Since $k$ and $n-k$ are coprime, it is easy to rule out imprimitive subgroups, so we may assume $H$ is primitive. Here it is helpful to observe that $y^{n-k}$ is a $k$-cycle and $1<k<n/2$, so a classical theorem of Marggraf from 1889 (see \cite[Theorem 13.5]{Wie}) implies that $H=G$ and we reach a contradiction. Therefore, $\mathcal{M}(y) = \{H\}$ with $H = (S_k \cap S_{n-k}) \cap G$, and the action of $G$ on $G/H$ is equivalent to the action of $G$ on the $k$-element subsets of $\{1, \ldots, n\}$. It is straightforward to show that ${\rm fpr}(x,G/H) < 1/3$ for all $x \in G$ of prime order (see the proof of \cite[Proposition 6.3]{BGK}), whence
$$\sum_{H \in \mathcal{M}(y)}{\rm fpr}(x,G/H) < \frac{1}{3}$$
and thus $u(G) \geqs 3$ via Lemmas \ref{lp1} and \ref{c:mgen}.
\end{ex}

\begin{re}
The analysis of odd degree alternating groups is more complicated. In this situation, we cannot choose an element $y \in A_n$ with exactly two cycles, so one may be forced to work with an element that is contained in several maximal subgroups. Still, some special cases are easy to handle. For example, if $G = A_{19}$ and $y$ is a $19$-cycle, then $\mathcal{M}(y) = \{H\}$ with 
$$H = N_G(\la y \ra) = {\rm AGL}_{1}(19) \cap G = Z_{19}{:}Z_9$$
and one can check that 
$${\rm fpr}(x,G/H) \leqs \frac{1}{6098892800}$$
for all $x \in G$ of prime order (with equality if $x \in G$ has cycle-shape $[3^6,1]$). Therefore, $u(G) \geqs 6098892799$, which agrees with the lower bound on $s(G)$ in \eqref{e:a19}.
\end{re}

\begin{ex}
Suppose $G=E_8(q)$ and let $y \in G$ be a generator of a maximal torus of order $r=q^8+q^7-q^5-q^4-q^3+q+1$. By a theorem of Weigel (see case (j) in \cite[Section 4]{Wei}), we have $\mathcal{M}(y) = \{H\}$ with $H = N_G(\la y \ra) = Z_{r}{:}Z_{30}$. Since $|x^G|>q^{58}$ for all $x \in G$ of prime order (minimal if $x$ is a long root element), we deduce that 
$${\rm fpr}(x,G/H) = \frac{|x^G \cap H|}{|x^G|} <  \frac{|H|}{q^{58}} < q^{-44}$$
and thus $u(G) \geqs q^{44}$. 
\end{ex}

\begin{ex}
Suppose $G = {\rm PSp}_{2m}(q)$ is a symplectic group, where $m \geqs 6$ is even and $q$ is odd. Let $V$ be the natural module for $G$. Following \cite[Proposition 5.10]{BGK}, fix a semisimple element $y \in G$ that preserves an orthogonal decomposition $V = U \perp W$, where $U$ and $W$ are nondegenerate subspaces of dimension $4$ and $2m-4$, respectively. Moreover, assume that $y$ acts irreducibly on both $U$ and $W$. Since $U$ and $W$ are the only proper nonzero subspaces of $V$ preserved by $y$, it follows that the stabiliser of $U$ in $G$ (a subgroup of type ${\rm Sp}_{4}(q) \times {\rm Sp}_{2m-4}(q)$) is the only reducible subgroup in $\mathcal{M}(y)$. 

In order to determine the irreducible subgroups in $\mathcal{M}(y)$, it is very helpful to observe that $|y|$ is divisible by a primitive prime divisor of $q^{2m-4}-1$ (see Definition \ref{d:ppd}). Recall that the subgroups of classical groups containing such elements are studied in \cite{GPPS}, where the analysis is organised according to Aschbacher's subgroup structure theorem (see \cite{asch}). By carefully applying the main theorem of \cite{GPPS}, it is possible to severely restrict the subgroups in $\mathcal{M}(y)$. Indeed, one can show that there are only two irreducible subgroups in $\mathcal{M}(y)$, both of which are field extension subgroups of type ${\rm Sp}_{m}(q^2)$ (see \cite[Proposition 5.10]{BGK}). We now need to estimate fixed point ratios for the appropriate actions. 

Suppose $x \in G$ has prime order. If $H$ is the reducible subgroup of type ${\rm Sp}_{4}(q) \times {\rm Sp}_{2m-4}(q)$, then \cite[Proposition 3.16]{GK} gives
$${\rm fpr}(x,G/H) < 2q^{2-m} + q^{-m}+q^{-2}+q^{4-2m}.$$
Similarly, if $H$ is of type ${\rm Sp}_{m}(q^2)$ then \cite[Lemma 3.4]{BGK} yields
$${\rm fpr}(x,G/H) < q^{3-2m}.$$
Putting all this together, we conclude that 
$$\sum_{H \in \mathcal{M}(y)}{\rm fpr}(x,G/H)< 2q^{2-m} + q^{-m}+q^{-2}+q^{4-2m}+2q^{3-2m} < \frac{1}{3}$$
for all $m \geqs 6$ and $q \geqs 3$, whence $u(G) \geqs 3$.
\end{ex}

The spread of sporadic groups has also been the subject of several papers, giving upper and lower bounds (see \cite{BH,BM,F1,F2} for example). For example, the best known result on the spread of the Monster $\mathbb{M}$ gives  
$$3385007637938037777290624 \leqs s(\mathbb{M}) \leqs 5791748068511982636944259374$$
(see \cite[Theorem 1]{F2}). It is interesting to note that ${\rm M}_{11}$ and ${\rm M}_{23}$ are the only sporadic simple groups for which the exact spread has been computed: we have $s({\rm M}_{11})=3$ and $s({\rm M}_{23}) = 8064$. 

\subsection{Almost simple groups and generating graphs}

We can extend the study of spread and uniform spread to the broader class of almost simple groups. Recall that a finite group $G$ is \emph{almost simple} if 
\[ 
T \leqs G \leqs {\rm Aut}(T)
\]
for some non-abelian finite simple group $T$ (the socle of $G$). The following theorem on minimal generation is due to Dalla Volta and Lucchini (see \cite[Theorem 1]{DL}).

\begin{thm}\label{t:dl}
Let $G$ be an almost simple group with socle $T$. Then 
\[
d(G) = \max\{2,d(G/T)\}\leqs 3.
\]
\end{thm}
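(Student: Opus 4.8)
The plan is to prove both inequalities $d(G) \geqs \max\{2, d(G/T)\}$ and $d(G) \leqs \max\{2, d(G/T)\}$, and then to bound the right-hand side. The lower bound is immediate: any generating set of $G$ projects onto one of the quotient $G/T$, so $d(G/T) \leqs d(G)$, and since $G$ contains the non-abelian group $T$ it is non-cyclic, giving $d(G) \geqs 2$. For the upper bound, write $m = d(G/T)$ and $k = \max\{2,m\}$, choose elements $\bar g_1, \dots, \bar g_m$ generating $G/T$, and lift them to $g_1, \dots, g_m \in G$; then $\la g_1, \dots, g_m\ra T = G$. The task is to adjust these lifts within their $T$-cosets (and to add at most $k-m$ further elements of $T$) so that the resulting subgroup $K$ satisfies $K = G$, equivalently $K \cap T = T$. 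The key structural feature, used throughout, is that $T$ is the unique minimal normal subgroup of $G$ with $C_G(T) = 1$, so that any nontrivial subgroup of $T$ normalised by $K$ which generates $T$ forces $T \leqs K$.

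The crux is the case where $G/T$ is cyclic, where $k = 2$ and we must exhibit a generating \emph{pair}. Fix $g \in G$ with $\la g\ra T = G$ and seek $t \in T$ with $\la g, t\ra = G$. Setting $N = \la t^{g^i} : i \in \Z\ra \leqs T$, one checks that $N$ is normalised by $g$ (by construction) and by $t$ (since $t \in N$), hence $N \trianglelefteq \la g,t\ra$; so it suffices to choose $t$ whose closure under the automorphism induced by $g$ is all of $T$, for then $\la g,t\ra \supseteq T$ and $\la g,t\ra = G$ follows. Existence of such $t$ is precisely a generation statement of the type handled by the probabilistic machinery of Section \ref{s:spread}: the elements $t \in T$ with $\la g, t\ra \ne G$ all lie in $\bigcup_H (H \cap T)$, the union being over maximal subgroups $H < G$ with $g \in H$ and $HT = G$, and one shows $\sum_H |H \cap T|/|T| < 1$ using the subgroup-structure theorems (Aschbacher's theorem and the analysis of $\mathcal{M}(g)$) together with fixed point ratio bounds, exactly as in Lemmas \ref{lp1} and \ref{c:mgen}. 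This delivers a suitable $t$, whence $d(G) = 2$.

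When $m = d(G/T) \geqs 2$ one argues similarly: the extra generator supplies enough freedom to choose the coset representatives $g_i$ so that $\la g_1, \dots, g_m\ra \cap T = T$, again by a counting estimate bounding the proportion of bad choices lying inside the maximal overgroups, so $d(G) = m$. Finally, to obtain $d(G) \leqs 3$ it remains to verify $d(G/T) \leqs 3$ for every $G/T \leqs {\rm Out}(T)$; this is a finite check using the explicit description of ${\rm Out}(T)$ furnished by the Classification, the decisive cases being the graph automorphisms of $T = {\rm P\O}_{8}^{+}(q)$ (triality, contributing a factor $S_3$ above the diagonal and field automorphisms) and $T = A_6$. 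I expect the genuine obstacle to be the cyclic case: producing the single element $t \in T$ completing a generating pair is not a formal lifting argument but requires the quantitative control on $\mathcal{M}(g)$ and on fixed point ratios described above, which is exactly where the $2$-generation of $T$ (Theorem \ref{t:sd2}) and its strong spread properties (Theorem \ref{t:ug1}) enter.
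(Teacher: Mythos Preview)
The paper does not supply its own proof of this theorem; it is quoted from Dalla Volta and Lucchini \cite{DL} without argument. So your proposal is not being compared against anything in the article, but it is worth saying how it relates to the original proof and to the methods surveyed here.

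Your outline is a viable strategy, but it is not the approach of \cite{DL}, and one step is more delicate than you indicate. The Dalla Volta--Lucchini argument (1995) predates the probabilistic spread machinery of \cite{GK,BGK}; it is algebraic, exploiting the uniqueness of $T$ as a minimal normal subgroup together with a counting of supplements (the same circle of ideas underlies the Lucchini--Menegazzo theorem \cite{LucM} used in the proof of Proposition~\ref{p:an}). In your approach, by contrast, you fix $g$ with $\langle g\rangle T=G$ and want $\sum_{H\in\mathcal{M}(g)}|H\cap T|/|T|=\sum_{H\in\mathcal{M}(g)}[G:H]^{-1}<1$. This is correct in principle, but the phrase ``exactly as in Lemmas~\ref{lp1} and \ref{c:mgen}'' glosses over a genuine issue: those lemmas and the worked examples following them concern a carefully chosen element of a \emph{simple} group, whereas here $g$ is constrained to lie in a prescribed coset outside $T$. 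Controlling $\mathcal{M}(g)$ for such $g$ is precisely the extra work carried out in \cite{BG,Harper} via Shintani descent, which the paper describes as ongoing towards Conjecture~\ref{c:bgk}; for an arbitrary $g$ in the coset the sum need not be small, so at minimum you must explain how $g$ is to be chosen. (A cleaner shortcut, using only simple-group input, is to pick $g$ in the generating coset with $g^n\ne 1$ where $n=|G/T|$, and then apply Theorem~\ref{t:ug1} to $T$ with $x=g^n$ to obtain $s\in T$ with $\langle g^n,s\rangle=T$, whence $\langle g,s\rangle=G$.) The final bound $d(G/T)\leqs 3$ is, as you say, a routine inspection of ${\rm Out}(T)$.
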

  
It is not difficult to see that this bound is best possible. For instance, if we take $G = {\rm Aut}({\rm L}_{n}(q))$, where $nq$ is odd and $q=p^{2f}$ with $p$ a prime, then the elementary abelian group $(Z_2)^3$ is a homomorphic image of $G$, whence $d(G)=3$.

Let $G$ be an almost simple group with socle $T$. In this more general setting, it is still possible to establish a slightly weaker spread-two property. Indeed, \cite[Corollary 1.5]{BGK} states that for any pair of non-identity elements $x_1,x_2 \in G$, there exists $y \in G$ such that $\la x_i,y \ra$ contains $T$, for $i=1,2$. Of course, some sort of modified statement is needed because 
$s(G)=0$ if $G/T$ is non-cyclic (indeed, if $1 \ne x \in T$ and $G = \la x,y \ra$ for some $y \in G$, then $G/T = \la Ty \ra$). In view of this observation, it is interesting to consider the spread and uniform spread of almost simple groups of the form $G = \la T, x \ra$ for some automorphism $x$ of $T$. 

Further motivation for studying this situation comes from a remarkable conjecture of Breuer, Guralnick and Kantor (see \cite[Conjecture 1.8]{BGK}).

\begin{con}[Breuer et al., 2008]\label{c:bgk}
\emph{Let $G$ be a finite group. Then $s(G) \geqs 1$ if and only if $G/N$ is cyclic for every nontrivial normal subgroup $N$ of $G$.}
\end{con}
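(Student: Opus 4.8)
The plan is to treat the two implications of Conjecture \ref{c:bgk} separately, since they are of entirely different character.

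The forward implication is elementary and is essentially the observation already recorded above. Suppose $s(G) \geqs 1$ and let $N$ be a nontrivial normal subgroup of $G$. Choose any non-identity element $x \in N$. By the spread hypothesis there exists $y \in G$ with $G = \la x,y \ra$, and reducing modulo $N$ gives $G/N = \la xN, yN \ra = \la yN \ra$, since $xN$ is trivial in $G/N$. Hence $G/N$ is cyclic, as required.

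The reverse implication is the substance of the conjecture, and I would attack it in two stages: a structural reduction followed by a probabilistic analysis of the surviving cases. First I would pin down the groups satisfying the hypothesis. If $G$ has two distinct minimal normal subgroups $M_1, M_2$, then $M_1 \cap M_2 = 1$ forces $G \into G/M_1 \times G/M_2$, a product of cyclic groups, so $G$ is abelian; a short rank argument then shows $G \cong Z_p \times Z_p$, for which $s(G) \geqs 1$ is immediate. Otherwise $G$ has a unique minimal normal subgroup $M$, and since every nontrivial normal subgroup contains $M$, the hypothesis collapses to the single requirement that $G/M$ be cyclic. This splits into two cases according to whether $M$ is abelian (so $G$ is soluble, with $G = \la M, g \ra$ for a suitable $g$) or $M = T^k$ is a product of copies of a nonabelian simple group $T$ with $C_G(M) = 1$.

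The nonabelian case is where the real work lies, and here I would deploy the probabilistic machinery of Lemmas \ref{lp1} and \ref{c:mgen}. For $k=1$ the group $G$ is almost simple with $G/T$ cyclic, which is precisely the extension of Theorem \ref{t:bgk} to groups of the form $G = \la T, x \ra$; for $k \geqs 2$ the transitivity of $G$ on the factors together with the cyclicity of $G/M$ forces a restrictive product-type structure that I would reduce to the case $k=1$. The strategy is to exhibit, in each generating coset of the cyclic group $G/T$, a conjugacy class $C = y^G$ such that $\sum_{H \in \mathcal{M}(y)} {\rm fpr}(x,G/H) < 1$ for all non-identity $x \in G$. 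Lemma \ref{c:mgen} then gives $Q(x,y) < 1$, and Lemma \ref{lp1} (with $k=1$) yields $s(G) \geqs 1$. As in the proof of Theorem \ref{t:bgk}, this requires choosing $y$ inside very few maximal subgroups, determining $\mathcal{M}(y)$ via subgroup-structure results such as Aschbacher's theorem \cite{asch} and the classification of \cite{GPPS}, and then bounding the relevant fixed point ratios.

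The hardest part will be the almost simple groups whose outer structure involves field, graph or graph-field automorphisms: here $x$ may lie in an outer coset, the maximal overgroups of a well-chosen $y$ are harder to pin down, and the fixed point ratio estimates must be made uniform across all families of simple groups together with the low-rank and small-order exceptions. The soluble case, though more elementary, also demands a separate treatment of the affine-type groups $G = \la M, g \ra$, where verifying spread one amounts to controlling how the cyclic subgroups of $G$ meet the complements to $M$.
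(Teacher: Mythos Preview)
This statement is recorded in the paper as an \emph{open conjecture}, not a theorem; the paper offers no proof of it. Your forward implication is correct and is precisely the observation the paper makes in the paragraph preceding the conjecture (if $1 \ne x \in N$ and $G = \la x, y\ra$, then $G/N = \la yN\ra$). The reverse implication is the substance of the conjecture and remains unresolved.

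Your outline for the reverse direction is not wrong, but it is a description of an ongoing research programme rather than a proof. The paper reports that Guralnick has carried out the reduction to almost simple groups (your structural sketch recovers much of this, though the ``short rank argument'' giving $G \cong Z_p \times Z_p$ tacitly uses the non-cyclic convention of Section~\ref{s:spread}, since a cyclic group such as $Z_6$ also has two distinct minimal normal subgroups). Beyond the reduction, the probabilistic machinery of Lemmas~\ref{lp1} and~\ref{c:mgen} is exactly what has been applied in the almost simple cases that \emph{are} settled: alternating and sporadic socles in \cite{Bin2,BGK}, linear socles in \cite{BG}, and symplectic and odd-dimensional orthogonal socles in \cite{Harper}. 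The passage you yourself flag as ``the hardest part'' --- identifying a suitable $y$ in an outer coset $Tx$, pinning down $\mathcal{M}(y)$ (for field automorphisms this is where Shintani descent enters, as the paper notes immediately after the conjecture), and making the fixed point ratio estimates uniform across all Lie-type families and their small-rank exceptions --- is precisely what is \emph{not} yet done and constitutes the open content of Conjecture~\ref{c:bgk}. So there is no proof in the paper to compare against, and your proposal, while an accurate map of the territory, leaves the genuine difficulty exactly where it was.
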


In recent work, Guralnick has established a reduction of this conjecture to almost simple groups and various special cases have been established. For instance, almost simple sporadic groups are handled in \cite{BGK}, while the desired result for symmetric groups was proved by Binder \cite{Bin2} (as previously noted). More precisely, these results show that $s(G) \geqs 2$ for every almost simple group $G$ with an alternating or sporadic socle $T$ and cyclic quotient $G/T$. For groups of Lie type, progress so far has focussed on certain families of classical groups, starting with the main theorem of \cite{BG}, which shows that $s(G) \geqs 2$ when $T = {\rm L}_{n}(q)$. 

To do this, our initial aim is to establish the bound $u(G) \geqs 2$ using the same probabilistic approach as before, via Lemmas \ref{lp1} and \ref{c:mgen}. Although the underlying strategy is the same, the details in the almost simple setting are significantly more complicated. Indeed, if our given group is $G = \la T,x \ra$ then we have to identify a suitable conjugacy class $y^G$ for some element $y$ in the coset $Tx$. Here the main challenge is to determine the maximal overgroups of such an element $y$ and various techniques are needed to do this, which depend on the specific type of automorphism $x$. For instance, if $x$ is a field automorphism, then we use the theory of \emph{Shintani descent} for algebraic groups to identify an appropriate element $y \in Tx$ (see \cite[Section 2.6]{BG} for further details). 

Using similar methods, the results in \cite{BG} have recently been extended by Harper \cite{Harper} to the classical groups with socle $T = {\rm PSp}_{n}(q)$ and $\O_n(q)$ (with $nq$ odd in the latter case). Further work to complete the analysis of almost simple groups of Lie type is in progress, with the ultimate goal of completing the proof of Conjecture \ref{c:bgk}.

\spc

Finally, to conclude this section we briefly explain how some of the above results can be cast in terms of the \emph{generating graph} of a finite group, which leads to some interesting open problems.

\begin{de}
Let $G$ be a finite group. The \emph{generating graph} $\Gamma(G)$ is a graph on the 
non-identity elements of $G$ so that two vertices $x,y$ are joined by an edge if and only if $G=\la x,y \ra$.
\end{de}

For a $2$-generated group $G$, this graph encodes some interesting  generation properties of the group. For example, $G$ is $\frac{3}{2}$-generated if and only if $\Gamma(G)$ has no isolated vertices. Similarly, if $s(G) \geqs 2$ then $\Gamma(G)$ is connected with diameter at most $2$. In this way, we obtain an appealing interplay between groups and graphs, leading to a number of natural questions. For instance, what is the (co)-clique number and chromatic number of $\Gamma(G)$? Does $\Gamma(G)$ contain a Hamiltonian cycle (i.e. a cycle that visits every vertex exactly once)? etc. The following theorem brings together some of the main results on the generating graph of a finite simple group.

\begin{thm}\label{t:gg1}
Let $G$ be a non-abelian finite simple group and let $\Gamma(G)$ be its generating graph.
\begin{itemize}
\item[{\rm (i)}] $\Gamma(G)$ has no isolated vertices.
\item[{\rm (ii)}] $\Gamma(G)$ is connected and has diameter $2$.
\item[{\rm (iii)}] $\Gamma(G)$ contains a Hamiltonian cycle if $|G|$ is sufficiently large.
\end{itemize}
\end{thm}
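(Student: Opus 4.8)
Parts (i) and (ii) should fall out almost immediately from the spread results already recorded. For (i), the remark preceding the theorem states that $\Gamma(G)$ has no isolated vertices if and only if $G$ is $\frac{3}{2}$-generated, i.e. $s(G)\geqs 1$; since Theorem \ref{t:ug1} gives $u(G)\geqs 1$ and $u(G)\leqs s(G)$, this holds. For (ii), Theorem \ref{t:bgk} gives $u(G)\geqs 2$, hence $s(G)\geqs 2$, and the same remark then shows that $\Gamma(G)$ is connected of diameter at most $2$. To see that the diameter is exactly $2$ I would exhibit a pair of non-adjacent vertices: choosing any $1\ne x\in G$ of order at least $3$ (such $x$ exists, since a group all of whose non-trivial elements are involutions is abelian), the elements $x$ and $x^2$ are distinct and non-trivial with $\la x,x^2\ra=\la x\ra\ne G$, so they lie at distance $2$.

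For (iii), the plan is to verify Dirac's condition. Write $N=|G|-1$ for the number of vertices and, for $1\ne x\in G$, let $P(x)$ denote the proportion of $y\in G$ with $G=\la x,y\ra$. Since neither $y=1$ nor $y=x$ can generate $G$ together with $x$, the degree of the vertex $x$ in $\Gamma(G)$ is exactly $P(x)|G|$. Thus if $P(x)>1/2$ for every non-trivial $x$, then every vertex has degree greater than $N/2$, and Dirac's theorem supplies a Hamiltonian cycle. The whole problem therefore reduces to proving $\min_{1\ne x}P(x)>1/2$ for $|G|$ sufficiently large; in fact one expects $\min_{1\ne x}P(x)\to 1$.

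To estimate $P(x)$ I would run the same union bound used throughout the survey. Since $G\ne\la x,y\ra$ forces $x,y$ into a common maximal subgroup, we obtain
\[
1-P(x)\leqs \sum_{H\in\mathcal{M}(x)}|G:H|^{-1}=\sum_{[H]}{\rm fpr}(x,G/H),
\]
where the final sum runs over representatives $H$ of the conjugacy classes of maximal subgroups: grouping by class and using that exactly $|G:H|\cdot{\rm fpr}(x,G/H)$ conjugates of $H$ contain $x$ (each self-normalising, as $G$ is simple) converts the index sum into a sum of fixed point ratios. Using the monotonicity ${\rm fpr}(x,G/H)\leqs{\rm fpr}(x^m,G/H)$ I would reduce to $x$ of prime order. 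For \emph{generic} $x$ — regular semisimple elements in groups of Lie type, or permutations of large support in $A_n$ — the relevant fixed point ratios are tiny and, combined with the decay of $\zeta_G(s)$ for $s>1$, this sum tends to $0$, giving $P(x)\to 1$ comfortably.

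The hard part will be the \emph{small} elements, where this union bound is far too lossy. For a transvection in a classical group of large rank over a small field, or a permutation of bounded support in $A_n$, the fixed point ratios against subspace- or subset-stabilisers are only of order $q^{-1}$, or even $1-O(n^{-1})$, and summing over the $\Theta({\rm rank})$ or $\Theta(n)$ relevant classes the bound need not tend to $0$ — even though the true value of $P(x)$ still does. For such $x$ one must bound $P(x)$ directly rather than by subgroup indices: a random $y$ paired with a small $x$ generates an intransitive or imprimitive group only with vanishing probability, so $\la x,y\ra$ is primitive with probability tending to $1$, and a Jordan--Marggraf-type theorem (cf. \cite[Theorem 13.5]{Wie}) then forces $\la x,y\ra=G$. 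Making this estimate uniform across all families and all such exceptional elements is the genuine core of the argument; the finitely many small groups for which $\min_{1\ne x}P(x)\leqs 1/2$ are absorbed by the hypothesis that $|G|$ is sufficiently large.
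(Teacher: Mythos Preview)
Your treatment of (i) and (ii) is correct and matches the paper's one-line proof (the paper simply observes that (ii) follows from Theorem~\ref{t:bgk} and that (ii) implies (i)); your extra sentence pinning the diameter down to exactly $2$ is a pleasant addition.

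Part (iii), however, has a genuine gap: Dirac's condition does \emph{not} hold, so your plan cannot succeed as stated. Take $G=A_n$ and $x=(1,2,3)$. The orbits of $\la x,y\ra$ on $\{1,\ldots,n\}$ are obtained from the cycles of $y$ by merging those cycles that meet $\{1,2,3\}$, so $\la x,y\ra$ is transitive only if every cycle of $y$ contains one of $1,2,3$, forcing $y$ to have at most three cycles. A uniformly random element of $A_n$ has about $\log n$ cycles, and the proportion with at most three cycles is $O((\log n)^2/n)$. Hence $P\big((1,2,3)\big)\to 0$, so the degree of the vertex $(1,2,3)$ in $\Gamma(A_n)$ is $o(|G|)$, far below $|G|/2$. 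Your claim that ``a random $y$ paired with a small $x$ generates an intransitive or imprimitive group only with vanishing probability'' is therefore backwards for elements of bounded support, and no Jordan--Marggraf argument can rescue it, since transitivity already fails.

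The proof in \cite{BGLMN} (as the paper notes immediately after the theorem) uses P\'{o}sa's criterion rather than Dirac's: with $m=|G|-1$ and vertex degrees $d_1\leqs\cdots\leqs d_m$, one only needs $d_k\geqs k+1$ for all $k<m/2$. This tolerates a small number of low-degree vertices, and the point is precisely that there are very few of them --- in $A_n$, for instance, elements of bounded support number only polynomially in $n$, negligible against $n!/2$. The input is the proof of Dixon's conjecture, which forces the \emph{bulk} of the degree sequence to be close to $|G|$; one then checks that the low-degree tail is thin enough for P\'{o}sa. Your union-bound calculation for generic $x$ is part of this picture, but it must be combined with a count of the exceptional vertices, not with a uniform lower bound on $P(x)$ that does not exist.
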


\begin{proof}
Clearly, (ii) implies (i), and (ii) is an immediate corollary of Theorem \ref{t:bgk}. Part (iii) is \cite[Theorem 1.2]{BGLMN}. 
\end{proof}

It is worth noting that the proof of (iii) uses probabilistic methods in the sense that it relies on the proof of Dixon's conjecture (see Conjecture \ref{c:dixon}). Roughly speaking, if $|G|$ is large then $\mathbb{P}_2(G)$ is close to $1$, which translates into lower bounds on the degrees of the vertices in $\Gamma(G)$. If these bounds are sufficiently large (relative to $|G|$), then one can appeal to \emph{P\'{o}sa's criterion} (in our setting, if $m=|G|-1$ and $d_1 \leqs \cdots \leqs d_m$ are the vertex degrees, then the condition we need is $d_k \geqs k+1$ for all $1 \leqs k < m/2$) to force the existence of a Hamiltonian cycle and this is how the proof of (iii) proceeds in \cite{BGLMN}.

It is conjectured that the generating graph of \emph{every} non-abelian finite simple group contains a Hamiltonian cycle. In fact, the following stronger conjecture is proposed in \cite{BGLMN} (see \cite[Conjecture 1.6]{BGLMN}).

\begin{con}[Breuer et al., 2010]\label{c:bet}
\emph{Let $G$ be a finite group with $|G| \geqs 4$. Then $\Gamma(G)$ contains a Hamiltonian cycle if and only if $G/N$ is cyclic for every nontrivial normal subgroup $N$ of $G$.}
\end{con}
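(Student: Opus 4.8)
The stated equivalence has an easy direction and a hard one. For the forward (necessity) implication, suppose some nontrivial normal subgroup $N \normal G$ has $G/N$ non-cyclic, and fix $1 \ne x \in N$. Exactly as in the observation recorded before Conjecture \ref{c:bgk}, any $y$ with $G = \la x, y \ra$ would force $G/N = \la Ny \ra$ to be cyclic; hence no such $y$ exists and $x$ is an isolated vertex of $\Gamma(G)$. A graph on at least three vertices with an isolated vertex cannot contain a Hamiltonian cycle, so (using $|G| \geqs 4$) the quotient condition is necessary. All the difficulty lies in the converse, and that is what the plan addresses.

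The first step of the sufficiency proof is structural. A non-cyclic group $G$ in which $G/N$ is cyclic for every nontrivial $N \normal G$, apart from the elementary abelian group $Z_p \times Z_p$, has a unique minimal normal subgroup $N = {\rm soc}(G)$ with $G/N$ cyclic: two distinct minimal normals $N_1, N_2$ would embed $G$ into the abelian group $G/N_1 \times G/N_2$, forcing $G$ abelian. This splits the problem into three families: (I) the abelian cases $Z_n$ and $Z_p \times Z_p$; (II) the affine case, where $N \cong \mathbb{F}_p^d$ is elementary abelian and $G = N{:}C$ with $C = G/N$ cyclic acting irreducibly on $N$; and (III) the non-abelian socle case, where $N \cong T^k$ for a non-abelian simple group $T$ and $G/N$ is cyclic, transitively permuting the $k$ factors. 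Case (III) with $k = 1$ is precisely the almost simple groups $G = \la T, x \ra$ with $G/T$ cyclic.

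For each family the plan is to follow the template behind Theorem \ref{t:gg1}(iii): convert strong generation data into lower bounds on the vertex degrees of $\Gamma(G)$ and then invoke Pósa's criterion. Concretely, write $m = |G| - 1$ and let $d_1 \leqs \cdots \leqs d_m$ be the degree sequence; it suffices to verify $d_k \geqs k+1$ for all $1 \leqs k < m/2$. The degree of a vertex $x$ is the number of non-identity $y$ with $G = \la x, y \ra$, equivalently $|G| - 1$ minus the size of $\bigcup_{H \in \mathcal{M}(x)} H$; bounding it below amounts to showing that $x$ lies in few maximal subgroups of small index, exactly the kind of estimate provided by the fixed-point-ratio machinery behind Lemma \ref{c:mgen}. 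In case (III) with $k = 1$ much of the required input is already available: the uniform spread bounds of Theorem \ref{t:bgk}, extended to almost simple groups in \cite{BG} and \cite{Harper}, together with the Dixon-type estimates underlying Conjecture \ref{c:dixon}, show that all but a bounded number of vertices have degree close to $|G|$, which yields the Pósa inequalities once $|G|$ is large; the finitely many small groups are then checked computationally. Cases (I) and (II) are handled by direct generation counts for cyclic and affine groups, and case (III) with $k > 1$ is reduced to the almost simple input by controlling generation across the simple factors.

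The main obstacle is the low-degree end of the degree sequence, uniformly across all of these families. The connectivity statement $u(G) \geqs 2$ and the diameter-two conclusion of Theorem \ref{t:gg1}(ii) are emphatically not enough, since Hamiltonicity is strictly stronger; Pósa's criterion demands control of the \emph{smallest} degrees $d_k$ with $k < m/2$, and these correspond to the most constrained elements, namely those lying in or near ${\rm soc}(G)$ which are contained in many maximal subgroups. Bounding their degree requires summing sharp fixed-point-ratio estimates over $\mathcal{M}(x)$, and two features make this genuinely harder than the simple-group case already settled in \cite{BGLMN}: the affine family (II), where generation probabilities need not be uniformly close to $1$ and small-degree vertices are difficult to exclude, and the product family (III) with $k > 1$ together with the full range of twisted automorphisms in the $k = 1$ case, where even identifying a good class representative $y$ and determining $\mathcal{M}(y)$ is delicate and, as in \cite{BG}, calls for tools such as Shintani descent. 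Until one has a degree-sequence bound valid across all three families, the borderline and small cases cannot be closed, which is why the conjecture remains open.
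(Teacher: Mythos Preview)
The statement is a \emph{conjecture}, and the paper does not offer a proof of it. The paper merely records that the necessity direction is clear and notes some partial progress on sufficiency: the soluble case is settled in \cite[Proposition 1.1]{BGLMN}, the groups ${\rm L}_2(q)$ are verified in \cite{Bnotes}, and Erdem \cite{Erdem} has handled $A_n$ for $n \geqs 4100$. Your proposal is consistent with this: you give a clean argument for the necessity direction (matching the paper's remark that this is ``clear''), and you then correctly acknowledge at the end that the sufficiency direction remains open.

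What you have written for sufficiency is a strategic outline, not a proof, and you present it as such. A couple of calibrations are worth noting. First, your families (I) and (II) are soluble, so they are already covered by \cite[Proposition 1.1]{BGLMN}; the genuinely open territory is your family (III). Second, the P\'osa-criterion approach you sketch is exactly the method used in \cite{BGLMN} to prove Theorem \ref{t:gg1}(iii) for sufficiently large simple groups, so invoking it here is natural, but as you observe it does not yet extend to all almost simple groups with cyclic outer quotient, let alone to the product-type groups with $k>1$. In short: there is no ``paper's own proof'' to compare against, your necessity argument is correct, and your discussion of the obstacles to sufficiency accurately reflects the state of the problem.
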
 

Notice that the condition on quotients here is identical to the one in Conjecture \ref{c:bgk}. Of course, it is clear that this is a necessary condition for Hamiltonicity, but it is rather striking that it is also conjectured to be sufficient. By \cite[Proposition 1.1]{BGLMN}, the conjecture holds for all soluble groups and it has been verified for the simple groups ${\rm L}_{2}(q)$ (see \cite[Section 6]{Bnotes}). There has also been recent progress for alternating groups by Erdem \cite{Erdem}, who has proved that $\Gamma(A_n)$ is Hamiltonian if $n \geqs 4100$. 

We finish by formulating another conjecture, which combines and strengthens Conjectures \ref{c:bgk} and \ref{c:bet}.

\begin{con}
Let $G$ be a finite group with $|G| \geqs 4$. Then the following are equivalent:
\begin{itemize}
\item[{\rm (i)}] $G$ has spread $1$.
\item[{\rm (ii)}] $G$ has spread $2$.
\item[{\rm (iii)}] $\Gamma(G)$ has no isolated vertices.
\item[{\rm (iv)}] $\Gamma(G)$ is connected. 
\item[{\rm (v)}] $\Gamma(G)$ is connected with diameter at most $2$.
\item[{\rm (vi)}] $\Gamma(G)$ contains a Hamiltonian cycle.
\item[{\rm (vii)}] $G/N$ is cyclic for every nontrivial normal subgroup $N$.
\end{itemize}
\end{con}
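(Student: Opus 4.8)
My plan is to prove the seven conditions equivalent by showing that each of (i)--(vi) implies (vii), and conversely that (vii) implies all of them. The first direction is largely formal. By definition $G$ has spread $1$ exactly when $\Gamma(G)$ has no isolated vertex, so (i) $\Leftrightarrow$ (iii); spread $2$ implies spread $1$, so (ii) $\Rightarrow$ (i); and $s(G) \geqs 2$ forces $\Gamma(G)$ to be connected of diameter at most $2$, so (ii) $\Rightarrow$ (v). Together with the trivial links (v) $\Rightarrow$ (iv) $\Rightarrow$ (iii) and (vi) $\Rightarrow$ (iii) (a Hamiltonian cycle gives minimum degree at least $2$), this shows that every one of (i),(ii),(iv),(v),(vi) implies (iii). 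It therefore suffices to prove (iii) $\Rightarrow$ (vii), and here one clean argument works: if (vii) fails, choose a nontrivial normal subgroup $N$ with $G/N$ non-cyclic and an element $1 \ne x \in N$; for every $y \in G$ the image of $\langle x,y\rangle$ in $G/N$ is the cyclic, proper subgroup $\langle Ny\rangle$, so $\langle x,y\rangle \ne G$ and $x$ is isolated. Hence $\neg$(vii) $\Rightarrow$ $\neg$(iii), completing the forward direction.

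All the substance lies in the converse, and it concentrates in two implications: (vii) $\Rightarrow$ (ii) and (vii) $\Rightarrow$ (vi); the first then returns (i),(iii),(iv),(v) through the web above. Now (vii) $\Rightarrow$ (ii) is a strengthening of Conjecture \ref{c:bgk}, upgrading spread $1$ to spread $2$ so that (i) and (ii) coincide, while (vii) $\Rightarrow$ (vi) is precisely Conjecture \ref{c:bet}. For both I would exploit the structural force of (vii): when $G$ is non-abelian it forces a unique minimal normal subgroup, since two distinct minimal normals intersect trivially and would embed $G$ into the product of its cyclic quotients, making $G$ abelian. This is exactly the entry point for Guralnick's reduction of Conjecture \ref{c:bgk} to almost simple groups, and I would run the parallel reduction here: dispatch the soluble groups by combining \cite[Proposition 1.1]{BGLMN} for (vi) with a direct spread computation for the affine-type groups satisfying (vii), and reduce the insoluble case to almost simple groups $G = \langle T, x\rangle$ with $G/T$ cyclic.

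For almost simple $G$ the two target implications are attacked with the fixed-point-ratio machinery of Lemmas \ref{lp1} and \ref{c:mgen}. Spread $2$ for the socle itself is Theorem \ref{t:bgk}; the alternating and sporadic socles with cyclic quotient are settled in \cite{Bin2, BGK}, and the classical socles ${\rm L}_n(q)$, ${\rm PSp}_n(q)$ and $\O_n(q)$ in \cite{BG, Harper}. The main obstacle is twofold. First, completing (vii) $\Rightarrow$ (ii) for the outstanding groups of Lie type rests on determining the maximal overgroups of a well-chosen element $y$ in an outer coset $Tx$ --- the genuinely hard step, which varies with the type of $x$ (diagonal, field automorphism via Shintani descent, or graph) and is not yet available for the exceptional groups. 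Second, and I expect this to be the decisive difficulty, is Hamiltonicity: it does not follow from spread, and the only known route feeds the estimate $\mathbb{P}_2(G) \to 1$ of Dixon's conjecture (Conjecture \ref{c:dixon}) into P\'osa's criterion, which yields (vi) only once $|G|$ is large. Forcing a Hamiltonian cycle uniformly over \emph{all} admissible groups --- in particular the small and almost simple cases, where the asymptotic degree bounds are too weak --- will require a separate and delicate combinatorial analysis of $\Gamma(G)$.
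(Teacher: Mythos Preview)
The statement you are attempting to prove is labeled in the paper as a \emph{conjecture}, and the paper does not supply a proof. Immediately after stating it, the author remarks that it can be verified in the soluble case (via the proof of Conjecture~\ref{c:bet} for soluble groups in \cite{BGLMN}) but that ``it is very much an open problem for insoluble groups.'' So there is no proof in the paper to compare your attempt against.

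Your analysis of the formal implications is correct and matches the paper's surrounding discussion: (i) $\Leftrightarrow$ (iii) by definition, (ii) $\Rightarrow$ (v) $\Rightarrow$ (iv) $\Rightarrow$ (iii), (vi) $\Rightarrow$ (iii), and (iii) $\Rightarrow$ (vii) by the isolated-vertex argument you give. You have also correctly identified that the substantive content lies in (vii) $\Rightarrow$ (ii) and (vii) $\Rightarrow$ (vi), and that these are precisely a strengthening of Conjecture~\ref{c:bgk} and the full Conjecture~\ref{c:bet}, both of which are open. Your own proposal acknowledges this: you note that the maximal-overgroup analysis ``is not yet available for the exceptional groups'' and that forcing Hamiltonicity uniformly ``will require a separate and delicate combinatorial analysis.'' In other words, your proposal is not a proof but a (correct) reduction of the conjecture to its known hard cases, together with an outline of the available machinery. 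That is the state of the art as described in the paper; no further proof exists to be compared with.
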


Notice that this conjecture implies that there is no finite group with $s(G)=1$. It also gives the following remarkable dichotomy for generating graphs: either $\Gamma(G)$ has an isolated vertex, or it is connected with diameter at most $2$. Given the proof of Conjecture \ref{c:bet} for soluble groups in \cite{BGLMN}, it is not too difficult to verify the conjecture in the soluble case. However, it is very much an open problem for insoluble groups.

\section{Generating subgroups of simple groups}\label{s:sub}

In this final section, we investigate the generation properties of subgroups of simple groups. As we have seen repeatedly in Sections \ref{s:simple} and \ref{s:spread}, many questions concerning the generation of simple groups can be reduced to problems involving their maximal subgroups. However, there are very few results in the literature on the generation properties of these subgroups themselves. For example, it is natural to consider the extent to which some of the familiar results for simple groups (such as $2$-generation and random generation, as in Dixon's conjecture) can be extended to certain subgroups of interest, with appropriate modifications (if necessary). The goal of this section is to address some of these questions; the main references are \cite{BLS1} and \cite{BLS2}.

\spc

Let $G$ be a finite group and recall that $d(G)$ denotes the minimal number of generators for $G$. Notice that $d$ is not a monotonic function, in the sense that a subgroup $H$ of $G$ may require more generators. For instance, if $n$ is even, then the elementary abelian subgroup $\la (1,2), (3,4),\ldots, (n-1,n)\ra$ of $S_n$ needs $n/2$ generators, but  $S_n = \la (1,2),(1,2,\ldots, n) \ra$ is $2$-generated. In this setting, there is an attractive theorem of McIver and Neumann (see \cite[Lemma 5.2]{MN}), which states that 
$\max\{d(H) \,:\, H \leqs S_n\} = \lfloor n/2 \rfloor$
for all $n \geqs 4$, so there is some control on the required number of generators for a subgroup of $S_n$. More generally, we can bound $d(H)$ in terms of $d(G)$ and its index $[G:H]$.

\begin{lemma}\label{l:ns}
If $G$ is a finitely generated group and $H \leqs G$ has finite index, then 
$$d(H) \leqs  [G:H](d(G)-1)+1.$$
\end{lemma}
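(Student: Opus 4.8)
The plan is to reduce the statement to the well-understood situation of a finite-index subgroup of a free group. Set $d = d(G)$ and $n = [G:H]$, and fix generators $g_1, \dots, g_d$ of $G$; these determine a surjection $\varphi \colon F \to G$, where $F$ is the free group of rank $d$. Put $K = \varphi^{-1}(H)$. Since $\varphi$ is surjective, its cosets map bijectively onto those of $H$ in $G$, so $[F:K] = n$, and $\varphi$ restricts to a surjection $K \to H$. As the image of any generating set under a surjection is again a generating set, we obtain $d(H) \leqs d(K)$, so it suffices to bound $d(K)$ for the finite-index subgroup $K$ of the free group $F$.

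The key input is then that finite-index subgroups of free groups are themselves free with tightly controlled rank: by the Nielsen--Schreier theorem $K$ is free, and by the Schreier index formula its rank is exactly $\mathrm{rank}(K) = n(d-1)+1$. Since $d(K) = \mathrm{rank}(K)$ for a free group, combining this with $d(H) \leqs d(K)$ yields $d(H) \leqs n(d-1)+1 = [G:H](d(G)-1)+1$, as required.

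To keep the argument self-contained I would prove the Schreier index formula via the Schreier coset graph $\Gamma$: its vertices are the $n$ right cosets of $K$ in $F$, and for each coset and each free generator $x_i$ there is one edge recording right multiplication by $x_i$, so $\Gamma$ is connected with $n$ vertices and $nd$ edges. Choosing a Schreier transversal for $K$ amounts to choosing a spanning tree of $\Gamma$, which has $n-1$ edges, and the remaining $nd - (n-1) = n(d-1)+1$ edges furnish a free basis of $K \cong \pi_1(\Gamma)$. Equivalently, the Reidemeister--Schreier rewriting process produces $nd$ Schreier generators, of which exactly the $n-1$ associated with the tree edges are trivial.

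The reductions---passing to $F$ through $\varphi$ and transporting generators across the surjection $K \to H$---are entirely formal, so the only genuine content lies in the rank computation for $K$. The crux is the exact count of $n-1$ trivial (tree-edge) Schreier generators among the $nd$ in total, equivalently the identity $\mathrm{rank}(K) = E - V + 1 = nd - n + 1$ for the connected coset graph; this is the one step that is not purely formal, and it is precisely here that the finiteness of the index and the Schreier prefix-closure property of the transversal are used.
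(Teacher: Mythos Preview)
Your argument is correct and matches the paper's proof essentially line for line: lift $H$ through a surjection from a free group of rank $d(G)$, apply the Nielsen--Schreier index formula to the preimage, and push generators back down. The only difference is cosmetic notation (the paper writes $G=F/K$, $H=L/K$ rather than $K=\varphi^{-1}(H)$) and that you go further by sketching a proof of the index formula itself, which the paper simply cites.
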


\begin{proof}
Let $F$ be the free group on $d(G)$ generators, so $G = F/K$ and $H = L/K$ for some subgroups $K \leqs L \leqs F$. Since $[F:L] = [G:H]$ is finite, the Nielsen--Schreier index formula implies that $d(L) = [F:L](d(F)-1)+1$. Therefore
$$d(H) = d(L/K) \leqs d(L) = [G:H](d(G)-1)+1$$
and the result follows.
\end{proof}

The following example shows that the upper bound in Lemma \ref{l:ns} is sharp, even for maximal subgroups.

\begin{ex}\label{ex:max}
Let $p \geqs 3$ be a prime and consider the group $G= (Z_2)^{p+1}{:}Z_p$, where $Z_p$ cyclically permutes the first $p$ copies of $Z_2$ in the direct product $(Z_2)^{p+1}$. Set $H = (Z_2)^{p+1}$. Then $H$ is a maximal subgroup of $G$ and it is easy to see that $d(G)=2$ and $d(H)=p+1=[G:H]+1$.
\end{ex}

\subsection{Maximal subgroups}\label{s:max}

Let $G$ be an almost simple group with socle $T$ and recall that $d(G) \leqs 3$ (see Theorem \ref{t:dl}). We begin our investigation of the  generation properties of subgroups of $G$ by starting at the top of the subgroup lattice with the maximal subgroups. The minimal generation of these subgroups is systematically studied in \cite{BLS1} and the main result is the following theorem (see \cite[Theorem 2]{BLS1}), which reveals that every maximal subgroup of $G$ can also be generated by very few elements.

\begin{thm}\label{t:maxgen}
Let $G$ be an almost simple group with socle $T$ and let $H$ be a maximal subgroup of $G$. Then $d(H \cap T) \leqs 4$ and $d(H) \leqs 6$. In particular, every 
maximal subgroup of a finite simple group is $4$-generated.
\end{thm}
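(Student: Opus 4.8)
The plan is to proceed by reduction to the socle via a careful case analysis organised around Aschbacher's subgroup structure theorem. The first observation is that it suffices to bound $d(H \cap T)$, since the quotient $H/(H \cap T)$ embeds into $G/T$, which by Theorem \ref{t:dl} satisfies $d(G/T) \leqs 3$; combining a generating set for $H \cap T$ with lifts of generators for the quotient then gives $d(H) \leqs d(H \cap T) + d(G/T)$, and one checks this yields $d(H) \leqs 6$ once $d(H \cap T) \leqs 4$ is known. So the heart of the matter is the bound $d(H \cap T) \leqs 4$, and here I would split into the three families of simple groups.

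For $T = A_n$ alternating, I would invoke the O'Nan--Scott theorem to classify the maximal subgroups $H$ of $G$. The intransitive and imprimitive cases give subgroups of the form $(S_k \times S_{n-k}) \cap G$ or wreath products $(S_k \wr S_\ell) \cap G$, whose minimal generation is controlled directly by the McIver--Neumann bound $\max\{d(K) : K \leqs S_m\} = \lfloor m/2 \rfloor$ together with the fact that direct and wreath products of few-generated groups remain few-generated after one accounts for the permutation action on the factors. The primitive maximal subgroups are almost simple or of affine/diagonal/product type, and for these one uses an inductive handle: the almost simple ones are themselves covered by Theorem \ref{t:dl}, while the affine type groups $AGL_d(p) \cap G$ reduce to bounding $d$ of a semidirect product of an elementary abelian group by a linear group. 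The main care is needed for the affine case, where a naive application of Lemma \ref{l:ns} is far too weak (as Example \ref{ex:max} warns), so one must exploit the module structure rather than the index.

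For $T$ of Lie type, the bulk of the work lies in the classical groups, where I would run through the Aschbacher classes $\mathcal{C}_1, \ldots, \mathcal{C}_8$ and the class $\mathcal{S}$ of almost simple irreducible subgroups one at a time. The geometric classes $\mathcal{C}_1$--$\mathcal{C}_8$ have explicitly known structures — stabilisers of subspaces, decompositions, field extensions, classical subgroups over subfields, and so on — each built from smaller classical groups, tori, and permutation groups, so bounding $d(H \cap T)$ amounts to controlling $d$ of a short list of standard group constructions. The exceptional groups of Lie type are handled using the known (and now essentially complete) descriptions of their maximal subgroups. I expect the class $\mathcal{S}$ to be the main obstacle: here $H \cap T$ is (close to) an almost simple group acting irreducibly, and while Theorem \ref{t:dl} again bounds the generation of the almost simple quotient, one must bound the generation of the full preimage, which may involve a scalar or outer-diagonal extension, and one must ensure the bound of $4$ is genuinely uniform across all the sporadic-looking $\mathcal{S}$-subgroups rather than merely bounded. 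The final step is to assemble these cases and verify that the worst case over all families is exactly $4$ for $d(H \cap T)$ and $6$ for $d(H)$, with the claimed sharpness following from explicit small examples.
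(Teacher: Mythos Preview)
Your overall framework---a case analysis via the O'Nan--Scott theorem, Aschbacher's theorem, and the Liebeck--Seitz results for exceptional groups---is exactly the route taken in \cite{BLS1}, as the paper indicates. But the reduction you propose at the outset does not work as stated, and this is a genuine gap rather than a detail.

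You write $d(H) \leqs d(H \cap T) + d(G/T)$, but the correct inequality is $d(H) \leqs d(H \cap T) + d(H/(H\cap T))$, and $H/(H\cap T)$ is a \emph{subgroup} of $G/T$, not a quotient; subgroups of a $k$-generated group can require more than $k$ generators (cf.\ Lemma~\ref{l:ns} and Example~\ref{ex:max}). One can in fact verify that every subgroup of ${\rm Out}(T)$ is $3$-generated, but this requires a separate check using the known structure of outer automorphism groups, not Theorem~\ref{t:dl}. Even granting that, the arithmetic gives $d(H) \leqs 4 + 3 = 7$, not $6$, so ``one checks this yields $d(H)\leqs 6$'' is precisely the step that is missing. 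The paper avoids this issue entirely: rather than reducing to $H\cap T$, the argument in \cite{BLS1} bounds $d(H)$ directly in each case of the relevant structure theorem (as illustrated in Proposition~\ref{p:an} for alternating socle), and the bound $d(H\cap T)\leqs 4$ is established alongside, not as a stepping stone.

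A second, smaller point: you identify the class $\mathcal{S}$ as ``the main obstacle'', but in fact it is the easiest case. If $H\in\mathcal{S}$ then $H$ is almost simple, so $d(H)\leqs 3$ immediately by Theorem~\ref{t:dl}; the paper notes this explicitly. The genuine work lies in the geometric Aschbacher classes (parabolics, stabilisers of decompositions, extension-field and tensor subgroups), where one must control the generation of explicit extensions and products, as in Examples~\ref{ex:dh5} and~\ref{ex:diag}.
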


It is not too difficult to show that the bound $d(H \cap T) \leqs 4$ is best possible in the sense that there are infinitely many examples for which equality holds (see Examples \ref{ex:dh5} and  \ref{ex:diag}). However, it is possible that the bound $d(H) \leqs 6$ can be improved. For instance, if $T$ is an alternating group then the proof of Theorem \ref{t:maxgen} already gives $d(H) \leqs 4$ (see Proposition \ref{p:an} below) and similarly $d(H) \leqs 3$ if $T$ is a sporadic group. We can construct examples with $d(H)=5$ when $T$ is a classical group (the author thanks Dr. Gareth Tracey for drawing his attention to the following example).

\begin{ex}\label{ex:dh5}
Suppose $G = \la T, x \ra = T.2$, where $T = {\rm P\O}_{n}^{+}(q)$, $q=q_0^2$ is odd and $x$ is an involutory field automorphism of $T$. In addition assume $n=a^2$, where $a \geqs 6$ and $a \equiv 2 \imod{4}$. Then $G$ has a maximal subgroup $H$ of type $O_a^{+}(q) \wr S_2$ (in the terminology of \cite{KL}, this is a tensor product subgroup in Aschbacher's $\C_7$ collection) with precise structure 
$$H = ({\rm P\O}_{a}^{+}(q) \times {\rm P\O}_{a}^{+}(q)).(Z_2)^5.$$
Clearly, $d(H) \geqs 5$ and one can check that equality holds. Note that $d(H \cap T) = 4$ in this case, which demonstrates the sharpness of the first bound in Theorem \ref{t:maxgen}.
\end{ex}

However, it is not known if there are any examples with $d(H)>5$ when $T$ is a simple group of Lie type.

\begin{con}\label{c:as}
\emph{Every maximal subgroup of an almost simple group is $5$-generated.}
\end{con}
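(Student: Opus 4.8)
The plan is to refine the proof of Theorem \ref{t:maxgen}, sharpening the bound $d(H)\leqs 6$ to $d(H)\leqs 5$ in the single case left open. Since that proof already gives $d(H)\leqs 4$ when $T$ is alternating and $d(H)\leqs 3$ when $T$ is sporadic, I may assume throughout that $T$ is a simple group of Lie type. If $G=T$ then $d(H)=d(H\cap T)\leqs 4$, so I may also assume $T<G\leqs {\rm Aut}(T)$; by maximality of $H$ we then have either $T\leqs H$ or $HT=G$. The case $T\leqs H$ is comparatively straightforward: here $H\cap T=T$ is $2$-generated and $H/T$ is a maximal subgroup of the small group $G/T\leqs {\rm Out}(T)$, so the bound follows from a direct inspection of the possible structures of ${\rm Out}(T)$. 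The substantial difficulty is concentrated in the case $HT=G$, where $H/(H\cap T)\cong G/T$.

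The basic tool is the inequality $d(H)\leqs d(H\cap T)+d(G/T)$, obtained from a generating set of $H\cap T\normal H$ together with lifts of generators of the quotient $G/T$. As $d(H\cap T)\leqs 4$ and $d(G/T)\leqs 3$ by Theorem \ref{t:dl}, the naive sum only yields $d(H)\leqs 7$; the bound $d(H)\leqs 6$ in Theorem \ref{t:maxgen} comes from a more careful count in which suitable generators of $H\cap T$ simultaneously serve as coset representatives for $G/T$, rather than being adjoined separately. Example \ref{ex:dh5} attains $d(H)=5$ with $d(H\cap T)=4$ and $G/T$ cyclic, so the target $5$ is sharp and there is no slack to spare; the task is to push this coordination far enough to save one generator in every remaining configuration.

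To carry this out I would argue according to the Aschbacher class of $H$ for classical $T$, and according to the analogous structural classification of maximal subgroups for exceptional $T$, using the explicit descriptions of $H\cap T$ recorded in \cite{BLS1}. For subgroups of almost simple type (class $\ms$), the group $H\cap T$ is almost simple modulo its centre, so $d(H\cap T)\leqs 3$ by Theorem \ref{t:dl}, and a short argument of the same reuse-of-generators flavour brings the total down to $5$. The extremal value $d(H\cap T)=4$ is attained only among certain geometric subgroups — principally the imprimitive ($\C_2$), tensor-product ($\C_4$) and tensor-induced ($\C_7$) families — in which a large elementary abelian quotient of $H\cap T$ is produced by the automorphisms permuting and twisting the factors of a direct-product normal subgroup. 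The key claim to establish is that in each such configuration one may still choose five generators for $H$: the field automorphisms contribute only a single cyclic factor to $G/T$, while the diagonal and graph automorphisms appearing in $G/T$ can be realised by, or absorbed into, the elements already needed to generate the factor-permuting action inside $H\cap T$.

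The main obstacle lies with the orthogonal groups $T={\rm P\O}_n^{\pm}(q)$ of type $D_{n/2}$, where ${\rm Out}(T)$ is largest and genuinely non-cyclic: the diagonal group is $Z_2\times Z_2$, and for $n=8$ triality enlarges the graph automorphisms to $S_3$. Here $d(H\cap T)=4$ and a non-cyclic quotient $H/(H\cap T)\cong G/T$ can occur simultaneously, so it is no longer transparent that one generator can always be saved. One must instead analyse the extension $1\to H\cap T\to H\to G/T\to 1$ explicitly — tracking precisely which diagonal, graph and field automorphisms stabilise the relevant decomposition, how they act on its factors, and which of them already lie in the subgroup generated by the chosen elements of $H\cap T$ — in order to confirm that five generators suffice. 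Performing this bookkeeping uniformly across all families, and in particular disentangling the interaction of field automorphisms with the $Z_2\times Z_2$ diagonal part and with triality in the $D_{n/2}$ case, is exactly the delicate step that has so far resisted a complete treatment, and is why the statement remains a conjecture.
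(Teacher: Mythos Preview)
The statement you are addressing is Conjecture~\ref{c:as}, not a theorem: the paper explicitly presents it as an open problem and offers no proof. Immediately before stating the conjecture, the paper notes that ``it is not known if there are any examples with $d(H)>5$ when $T$ is a simple group of Lie type.'' So there is no proof in the paper for your proposal to be compared against.

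Your write-up is not a proof either, and to your credit you say so in the final sentence: the orthogonal case with non-cyclic ${\rm Out}(T)$ ``has so far resisted a complete treatment, and is why the statement remains a conjecture.'' What you have produced is a reasonable sketch of the strategy one would naturally try --- refine the case analysis behind Theorem~\ref{t:maxgen}, exploit the inequality $d(H)\leqs d(H\cap T)+d(G/T)$ more carefully by reusing generators, and isolate the geometric Aschbacher classes where $d(H\cap T)=4$ can occur --- together with a correct identification of the obstruction in type $D$. That is a fair summary of the landscape, but it is a discussion of the difficulty rather than a resolution of it. If the intent was to submit a proof, there is a genuine gap: the ``key claim'' about absorbing diagonal and graph automorphisms into the factor-permuting generators is asserted but not established, and the orthogonal analysis is left entirely undone.
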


Not surprisingly, subgroup structure theorems for almost simple groups play an essential role in the proof of Theorem \ref{t:maxgen}. As previously noted, the $2$-generation of finite simple groups is established by inspecting the list of groups provided by the Classification Theorem. The situation for maximal subgroups is not quite as clear-cut because, in general, we cannot consult a complete list of subgroups. However, we do have access to some powerful reduction theorems, such as Aschbacher's theorem \cite{asch} for finite classical groups (combined with the detailed structural information in \cite{KL} and the comprehensive treatment of the low-dimensional classical groups in \cite{BHR}) and theorems of Liebeck, Seitz and others for exceptional groups of Lie type (see \cite{LS03}). These results can be viewed as Lie type analogues of the 
O'Nan--Scott theorem (see Theorem \ref{t:ons}), which is the main tool for handling the alternating and symmetric groups. All of these results partition the maximal subgroups of an almost simple group into several families, providing a coherent framework for the proof of Theorem \ref{t:maxgen}. It is important to note that for the purposes of this proof, we do not need to worry about any unknown almost simple maximal subgroups because every almost simple group is $3$-generated by Theorem \ref{t:dl}.

\spc

Let us sketch a proof of Theorem \ref{t:maxgen} in the case where $T$ is an alternating group.
First we recall the O'Nan--Scott theorem, which describes the maximal subgroups of $G$ (see \cite[Theorem 4.1A]{DM}). 

\begin{thm}[O'Nan--Scott]\label{t:ons}
Let $G=A_n$ or $S_n$, and let $H$ be a maximal subgroup of $G$. Then one of the following holds:
\begin{itemize}
\item[{\rm (i)}] $H$ is intransitive: $H=(S_{k} \times S_{n-k})\cap G$, $1 \leqs k<n/2$;
\item[{\rm (ii)}] $H$ is affine: $H={\rm AGL}_{d}(p) \cap G$, $n=p^d$, $p$ prime, $d \geqs 1$; 
\item[{\rm (iii)}] $H$ is imprimitive or wreath-type: $H=(S_k \wr S_t)\cap G$, $n = kt$ or $k^t$, $k,t \geqs 2$;
\item[{\rm (iv)}] $H$ is diagonal: $H=(A^k.({\rm Out}(A) \times S_k))\cap G$, $A$ non-abelian simple, $n = |A|^{k-1}$;
\item[{\rm (v)}] $H$ is almost simple.
\end{itemize}
\end{thm}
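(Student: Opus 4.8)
The plan is to classify a maximal subgroup $H$ of $G \in \{A_n, S_n\}$ according to its action on the $n$ points, working down the natural hierarchy of transitivity and primitivity. The starting dichotomy is whether $H$ is transitive on $\{1,\ldots,n\}$. If $H$ is intransitive, then $H$ preserves the partition of $\{1,\ldots,n\}$ into its orbits; the setwise stabiliser of a subset of size $k$ is $(S_k \times S_{n-k}) \cap G$, so by maximality $H$ must equal such a subgroup with $1 \leqs k < n/2$. The constraint $k < n/2$ arises because for $k = n/2$ the partition $\{A, \{1,\ldots,n\}\setminus A\}$ is preserved by a strictly larger imprimitive group, so the subset stabiliser is no longer maximal. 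This yields case (i). If instead $H$ is transitive but imprimitive, then $H$ preserves a nontrivial block system with $t$ blocks of size $k$, where $n = kt$ and $k,t \geqs 2$; the full stabiliser of such a block system is $(S_k \wr S_t) \cap G$ in its imprimitive action, and maximality forces equality, giving the $n = kt$ branch of case (iii).

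This reduces everything to the primitive case, which is the heart of the matter. Here I would pass to the socle $N = {\rm soc}(H)$ and exploit the fact that a primitive group has a highly restricted socle: $N$ is a direct product $T^k$ of copies of a simple group $T$, normalised by $H$. Splitting on whether $T$ is abelian or not drives the remaining cases. If $T$ is abelian then $N$ is an elementary abelian $p$-group acting regularly, so $n = p^d$ and $H \leqs {\rm AGL}_d(p)$, whence $H = {\rm AGL}_d(p) \cap G$ by maximality; this is the affine case (ii). If $T$ is nonabelian, I would analyse how a point stabiliser meets $N$: when $k = 1$ the group $H$ is sandwiched between $T$ and ${\rm Aut}(T)$, i.e.\ $H$ is almost simple, which is case (v); when $k \geqs 2$ the point stabiliser is either a full diagonal subgroup of $N$, forcing $n = |T|^{k-1}$ and the diagonal structure of case (iv), or $H$ preserves a Cartesian product decomposition of $\{1,\ldots,n\}$ and acts in product action, yielding $H = (S_k \wr S_t) \cap G$ with $n = k^t$, the remaining branch of case (iii).

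The main obstacle is precisely this primitive analysis: establishing that the socle of a primitive group is a product of isomorphic simple groups and, in the nonabelian case, showing that the intersection of a point stabiliser with the socle leaves only the diagonal, product-action and almost simple possibilities. Controlling the normaliser ${\rm Aut}(N) = {\rm Aut}(T) \wr S_k$ together with the conjugation action of $H$ on the $k$ simple factors is the delicate step, since it is here that the product-action wreath product and the diagonal construction are separated; the remaining verifications, namely that maximality upgrades each containment $H \leqs M$ to an equality, are comparatively routine. In every branch one must also track the difference between $S_n$ and $A_n$, since intersecting with $A_n$ can split a subgroup or halve its index, but this affects only the precise description of $H \cap G$ and not the list of five types.
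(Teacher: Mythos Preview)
The paper does not prove this theorem at all: it is quoted as a standard result and attributed to \cite[Theorem~4.1A]{DM} (Dixon--Mortimer). There is therefore no ``paper's own proof'' to compare your attempt against; in this survey the O'Nan--Scott theorem functions as an input, not as something established in the text.

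That said, your outline follows the conventional route to the O'Nan--Scott theorem and is broadly sound as a plan. A couple of points where your sketch is thinner than it would need to be in a genuine proof: first, the assertion that the socle of a primitive group is a direct product $T^k$ of isomorphic simple groups already requires a nontrivial argument (one must show that a primitive group has at most two minimal normal subgroups, each a power of a fixed simple group, and that if there are two they are regular and isomorphic). Second, in the nonabelian case your dichotomy ``diagonal or product action'' for $k\geqs 2$ is too quick: one must also dispose of the twisted wreath product configuration, which does arise for primitive groups but never yields a maximal subgroup of $A_n$ or $S_n$ because such a group is contained in a product-action wreath product; and one must separate simple-diagonal from compound-diagonal actions, the latter again feeding into the product-action case. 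These are exactly the ``delicate steps'' you flag, and a full proof (as in Dixon--Mortimer or Liebeck--Praeger--Saxl) spends most of its effort there.
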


\begin{prop}\label{p:an}
Let $G$ be an almost simple group with socle $A_n$ and let $H$ be a maximal subgroup of $G$. Then $d(H) \leqs 4$.
\end{prop}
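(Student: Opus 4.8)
The plan is to proceed case-by-case through the five families of maximal subgroups $H$ listed in the O'Nan--Scott theorem (Theorem \ref{t:ons}), since $G = \la A_n, z\ra$ with $z$ inducing at most a transposition means $H$ is either a maximal subgroup of the relevant type in $S_n$ intersected with $G$, or its index-$2$ overgroup. In each case I would bound $d(H)$ by exhibiting explicit small generating sets or by invoking structural facts about the constituents. The almost simple case (v) is free, because any almost simple maximal subgroup is itself $3$-generated by Theorem \ref{t:dl}, so $d(H) \leqs 3$.

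For the remaining cases I would argue as follows. In the intransitive case (i), $H$ is (up to an index-two adjustment) a direct product $S_k \times S_{n-k}$, and each symmetric factor is $2$-generated; a direct product of two $2$-generated groups is at most $4$-generated, giving $d(H) \leqs 4$. The affine case (ii) gives $H = {\rm AGL}_d(p) \cap G$, a soluble or near-soluble group of the form $(Z_p)^d {:} {\rm GL}_d(p)$; here I would use that ${\rm GL}_d(p)$ is $2$-generated and that the natural module is generated as a group-with-operators by a single vector, so a careful count yields a bound well within $4$. For the imprimitive/wreath case (iii), $H = (S_k \wr S_t) \cap G$; the key observation is that a wreath product $B \wr S_t$ with $B$ itself $2$-generated is $2$- or $3$-generated (one needs generators for the top group $S_t$ together with a generator of one base factor, which the top group then spreads across all $t$ coordinates), so again $d(H) \leqs 4$ comfortably.

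The genuinely delicate case is the diagonal type (iv), where $H = (A^k.({\rm Out}(A) \times S_k)) \cap G$ with $A$ a non-abelian simple group and $n = |A|^{k-1}$. The subtlety is the socle $A^k$: although $A$ is $2$-generated, the minimal number of generators of a direct power $A^k$ of a fixed nonabelian simple group $A$ can grow, but crucially only logarithmically, and in the presence of the transitive action of $S_k$ permuting the factors this collapses dramatically. Here I would appeal to the standard fact that $A^k{:}S_k$ (or more precisely $A^k$ together with the $S_k$-action) is generated by very few elements --- one uses two generators to produce a single copy of $A$ and the $S_k$-generators to propagate it through all coordinates, after which the outer part ${\rm Out}(A)\times S_k$ contributes a bounded number more. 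Controlling this uniformly in $k$, so that the total stays at most $4$ after intersecting with $G$, is the main obstacle and the step I expect to require the most care; I anticipate it is exactly this family that forces the sharp bound $d(H\cap T) = 4$ (cf. Example \ref{ex:diag}). Once all five families yield $d(H) \leqs 4$, the proposition follows.
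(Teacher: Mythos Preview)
Your case-by-case plan along the O'Nan--Scott families is the right skeleton, and cases (i)--(iii) and (v) go through (though the paper actually proves the stronger bound $d(H)\leqs 3$ in those cases by showing $S_k\times S_{n-k}$, ${\rm AGL}_d(p)$ and $S_k\wr S_t$ are each $2$-generated, then using $[G:A_n]\leqs 2$). The genuine gap is in case (iv), the diagonal type.

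Your sketch there---generate one copy of $A$, spread it with the $S_k$-action, then append generators for the quotient ${\rm Out}(A)\times S_k$---overcounts. Any lift of a generating set for $H/A^k\cong {\rm Out}(A)\times S_k$ lies outside $A^k$ and contributes nothing toward generating $A^k$ until you throw in at least one extra element hitting $A^k$ nontrivially; conversely your two generators for a copy of $A$ lie in $A^k$ and die in the quotient. So the count you describe gives at best $d(H)\leqs 1+d({\rm Out}(A)\times S_k)$. But $d({\rm Out}(A)\times S_k)$ can equal $4$: take $A={\rm P\O}_{12}^{+}(p^{2f})$ and $k=2$, where ${\rm Out}(A)\times S_2 \cong D_8\times Z_{2f}\times Z_2$ needs four generators (this is exactly Example \ref{ex:diag}). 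Your argument then yields only $d(H)\leqs 5$.

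The paper circumvents this by invoking the Lucchini--Menegazzo theorem \cite{LucM}: since $A^k$ is the unique minimal normal subgroup of $H=A^k.({\rm Out}(A)\times S_k)$, one has $d(H)=\max\{2,\,d(H/A^k)\}$, with no ``$+1$''. The problem then reduces to checking $d({\rm Out}(A)\times S_k)\leqs 4$, which follows from the known structure of ${\rm Out}(A)$. The paper also treats the index-two situation $G=A_n$ in (iv) separately, analysing which elements of ${\rm Out}(A)\times S_k$ act as even permutations on the diagonal coset space to determine the precise shape of $H$; your proposal does not address this. Finally, note that $n=6$ (where ${\rm Aut}(A_6)\ne S_6$) needs a direct check, which you should mention.
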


\begin{proof}
The result can be checked directly if $n=6$, so we may assume $G = A_n$ or $S_n$. First we claim that $d(H) \leqs 3$ in cases (i), (ii), (iii) and (v) of Theorem \ref{t:ons}. If $H$ is almost simple, then $d(H) \leqs 3$ by Theorem \ref{t:dl}. Since $[G:A_n] \leqs 2$, it suffices to show that $d(L)= 2$ for $L=S_k \times S_{n-k}$, ${\rm AGL}_{d}(p)$ or $S_k \wr S_t$. 

\spc

 (i) For $L = S_k \times S_{n-k}$, it is easy to see that 
$L = \la ((1,2),x), (y,(1,2)) \ra$, where $x = (\a,\a+1,\ldots, n-k)$ and $y = (\b,\b+1, \ldots, k)$, with $\a=1$ if $n-k$ is odd, otherwise $\a=2$, and similarly $\b=1$ if $k$ is odd, otherwise 
$\b=2$.

\spc

(ii) If $L={\rm AGL}_{d}(p)$ then $L$ has a unique minimal normal subgroup of order $p^d$ and thus the main theorem of \cite{LucM} implies that $d(L) = \max\{2,d({\rm GL}_{d}(p))\} = 2$.

\spc

 (iii) Suppose $L=S_{k} \wr S_{t}$ and let
$(x_{1}, \ldots, x_{t} ; y)$ denote a general element of $L$, where $x_{i} \in S_{k}$ and $y \in S_{t}$. Set $\a=1$ if $k$ is odd, otherwise $\a=2$. If $t=2$ then $L=\langle x,y \rangle$, where $x=((1,2),(\a, \ldots, k); 1)$ and $y=(1,1;(1,2))$. Similarly, if $t \geqs 4$ is even then it is easy to check that $L = \langle x,y \rangle$ where
$$x=((1,2), 1, \ldots, 1; (2, \ldots, t)),\; y=(1,1,(\a,\ldots, k), 1, \ldots, 1; (1,2)).$$
Similar generators can be given when $t$ is odd. 

\spc

To complete the proof, we may assume $H$ is a diagonal-type subgroup as in part (iv) of Theorem \ref{t:ons}. Let $\Omega$ be the set of cosets of $\{(a, \ldots, a)\,:\, a \in A\}$ in $A^{k}$ and observe that the embedding of $H$ in $G$ arises from the action of $H$ on $\O$.

If $H=A^{k}.({\rm Out}(A) \times S_{k})$ then $A^k$ is the unique minimal normal subgroup of $H$, so \cite{LucM} yields $d(H)=\max\{2,d({\rm Out}(A) \times S_{k})\}$. The structure of the soluble group ${\rm Out}(A)$ is well understood and it is easy to show that 
$d({\rm Out}(A) \times S_{k}) \leqs 4$. 

Finally, suppose $G=A_n$ and $[A^{k}.({\rm Out}(A) \times S_{k}):H]=2$. First assume $k \geqs 3$. One checks that $(1,2) \in S_{k}$ induces an even permutation on $\Omega$, so $H=A^{k}.(J \times S_{k})$ with $[{\rm Out}(A):J]=2$ and we deduce that $d(H) = \max\{2,d(J \times S_{k})\} \leqs 4$. Now assume $k=2$. Here we calculate that $(1,2) \in S_2$  has precisely $\ell=\frac{1}{2}(|A|-i_2(A)-1)$ $2$-cycles on $\Omega$, where $i_2(A)$ is the number of involutions in $A$. Now, if $\ell$ is odd then $H = T^{2}.{\rm Out}(T)$ and thus $d(H) \leqs d({\rm Aut}(T))+1 \leqs 4$. On the other hand, if $\ell$ is even then $H=T^2.(J \times S_{2})$ with $[{\rm Out}(T):J]=2$ and as before we conclude that $d(H)=\max\{2,d(J \times S_{2})\} \leqs 4$.
\end{proof}

\begin{ex}\label{ex:diag}
We can construct maximal diagonal-type subgroups of alternating groups that need $4$ generators, which gives another demonstration of the sharpness of the bound $d(H \cap T) \leqs 4$ in Theorem \ref{t:maxgen}. For example, suppose $A={\rm P\O}_{12}^{+}(p^{2f})$ and $k=2$ for some prime $p \geqs 3$ and positive integer $f$. Then one can show that $H=A^2.({\rm Out}(A) \times S_2)$ is a maximal subgroup of $G=A_{n}$ 
(with $n=|A|$) and 
$$d(H)=\max\{2,d({\rm Out}(A) \times S_2)\} = d(D_8 \times Z_{2f} \times Z_2) = 4.$$
\end{ex}

\subsection{Random generation}

In the previous section we considered the minimal generation of maximal subgroups of 
simple (and almost simple) groups, with the aim of extending Theorem \ref{t:sd2}. In a similar spirit, we now turn to the random generation of these subgroups. 

Let us recall that the main result on the random generation of simple groups is the verification of  Dixon's conjecture (see Conjecture \ref{c:dixon}) and it is natural to ask if an appropriate analogue holds for maximal subgroups of simple groups. It is immediately clear that some modifications are required. Indeed, arbitrarily large maximal subgroups $H$ can have subgroups of bounded index, which prevents $\mathbb{P}_k(H)$ from tending to $1$ as $|H| \to \infty$, for any fixed $k$. For example, $H = S_{n-2}$ is a maximal subgroup of $A_n$ and we have $\mathbb{P}_k(H) \leqs 1 - 2^{-k}$ since a randomly chosen element of $H$ lies in $A_{n-2}$ with probability $1/2$.

The following result (see \cite[Corollary 4]{BLS1}) can be viewed as a best possible analogue of Dixon's conjecture for maximal subgroups of almost simple groups.

\begin{thm}\label{c:eps}
For any given $\e>0$ there exists an absolute constant $k=k(\e)$ such that 
$\mathbb{P}_k(H)>1-\e$ for any maximal subgroup $H$ of an almost simple group.
\end{thm}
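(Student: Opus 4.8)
The plan is to reduce the statement to a bounded-rank statement about the possible maximal subgroups $H$, and then exploit the fact that every such $H$ has a normal subgroup of bounded index whose quotient is controlled, combined with Dixon's conjecture (Conjecture \ref{c:dixon}) applied to the simple composition factors of $H$. The guiding principle is that $\mathbb{P}_k(H)$ is governed by two phenomena: first, the contribution of small "top" quotients of $H$ (such as the $S_{n-2} \to Z_2$ map in the example preceding the statement), which cannot be killed by letting $|H|$ grow but can be handled by taking $k$ large enough; and second, the contribution of the large simple sections, where $\mathbb{P}_2$ is already close to $1$ by the resolution of Dixon's conjecture.

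First I would invoke the subgroup structure theorems — the O'Nan--Scott theorem (Theorem \ref{t:ons}) for alternating socles, Aschbacher's theorem \cite{asch} together with \cite{KL,BHR} for classical socles, and \cite{LS03} for exceptional socles — to partition the maximal subgroups $H$ into finitely many families. By Theorem \ref{t:maxgen} every such $H$ satisfies $d(H) \leqs 6$, so $H$ is boundedly generated to begin with; the task is to upgrade this to a probabilistic statement uniform in $|H|$. For each family I would analyse the chief series of $H$. The key structural input is that $H$ has a characteristic subgroup of index bounded by an absolute constant whose structure is a product of (quasi)simple groups acted on by a small permutation group, together with a soluble residue coming from field, diagonal and graph automorphisms. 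The bounded-index "top" piece forces us to take $k$ large, which is exactly why the theorem asks only for an absolute $k=k(\e)$ rather than a fixed small value.

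The core estimate proceeds through a crown-based or Gasch\"utz-style counting argument: I would bound $1 - \mathbb{P}_k(H)$ by a sum over the chief factors $V$ of $H$ of a quantity measuring the probability that $k$ random elements fail to generate modulo the subgroup covering $V$. For the abelian chief factors this contributes terms that decay geometrically in $k$ (each giving a factor comparable to $|{\rm End}(V)|^{-k}$ or $2^{-k}$ in the worst split case), so summing over the boundedly-many such factors one gets a contribution below $\e/2$ once $k = k(\e)$ is large. For the non-abelian chief factors — products of a simple group $S$ — one uses that $\mathbb{P}_2(S) \to 1$ by Dixon's conjecture, so that $\mathbb{P}_k(S)$ is uniformly close to $1$ with an error decaying in both $|S|$ and $k$; the boundedly many such factors again contribute below $\e/2$. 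Adding the two bounds gives $\mathbb{P}_k(H) > 1 - \e$ uniformly.

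The hard part will be controlling the chief factors of bounded \emph{order} but unbounded \emph{multiplicity} or unbounded \emph{index of the covering subgroup} — precisely the situation illustrated by Example \ref{ex:max}, where $(Z_2)^{p+1}{:}Z_p$ has an abelian chief factor whose dimension grows with $p$. In the maximal-subgroup setting such unbounded elementary-abelian layers arise from wreath and tensor-product constructions, and for these one cannot simply fix $k$ independently of the layer dimension by a naive union bound. The resolution is that the acting group on such a layer is large enough (a symmetric or classical group acting on its module) that the relevant second cohomology and the module-generation probability are still controlled: one applies the bound of Lemma \ref{l:ns} together with a more careful module-theoretic count (in the spirit of the results underlying \cite{LucM}) showing that the number of random elements needed to generate across an irreducible module of a large group is absolutely bounded. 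Making this uniform across all families, and in particular verifying that no family produces a chief factor requiring $k$ to grow with $|H|$, is the crux of the argument and where I expect the genuine work to lie.
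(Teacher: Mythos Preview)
Your proposal takes a fundamentally different route from the paper, and the hardest step you identify is precisely the one the paper avoids by citing a black box.

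The paper's argument is short: invoke the theorem of Jaikin-Zapirain and Pyber \cite{JP} (Theorem~\ref{t:jp}), which says $\nu(G) < \b\, d(G) + \delta(G)$ for absolute constants, where $\nu(G) = \min\{k : \mathbb{P}_k(G) \geqs 1/e\}$ and $\delta(G)$ is the quantity in \eqref{e:delt} depending only on the \emph{non-abelian} chief factors. One then checks (i) $d(H) \leqs 6$ from Theorem~\ref{t:maxgen}, and (ii) $H$ has at most three non-abelian chief factors, whence $\delta(H) < 1$. So $\nu(H) < 6\b + 1 =: c$ uniformly, and a trivial amplification $(1 - \mathbb{P}_{cm}(H)) \leqs (1 - 1/e)^m$ finishes the proof.

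Your chief-factor analysis is, in effect, an attempt to reprove the relevant instance of \cite{JP} from scratch. The non-abelian layers are indeed harmless via Dixon's conjecture, but the case you flag as ``the crux'' --- abelian chief factors of unbounded dimension, as in unipotent radicals of parabolics or the base groups in wreath constructions --- is exactly the deep content of the Jaikin-Zapirain--Pyber theorem: that such layers contribute to $\nu(G)$ only through $d(G)$, not through their dimension. Your sketched resolution (large acting group, controlled cohomology, module-theoretic counts ``in the spirit of \cite{LucM}'') gestures in the right direction but does not constitute a proof; making this uniform is genuinely hard and is precisely what \cite{JP} accomplishes. Absent that, your argument has a gap at its self-identified crux. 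The efficient fix is to cite Theorem~\ref{t:jp} directly and reduce the problem to bounding $d(H)$ and the number of non-abelian chief factors of $H$, both of which you already have the tools for.
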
 

To describe the main steps in the proof, we need some additional notation. Let $G$ be a finite group and set 
$$\nu(G) = \min\{k \in \mathbb{N} \,:\, \mathbb{P}_k(G) \geqs 1/e\}.$$ 
Up to a small multiplicative constant, it is known that $\nu(G)$ is the expected number of random elements generating $G$ (see \cite{Pak} and \cite[Proposition 1.1]{Lub}). If $A$ is a non-abelian chief factor of $G$, let ${\rm rk}_A(G)$ be the maximal number $r$ such that a normal section of $G$ is the direct product of $r$ chief factors of $G$ isomorphic to $A$, and let $\ell(A)$ be the minimal degree of a faithful transitive permutation representation of $A$. Set
\begin{equation}\label{e:delt}
\delta(G) = \max_{A}\left\{\frac{\log {\rm rk}_A(G)}{\log \ell(A)}\right\},
\end{equation}
where $A$ runs through the non-abelian chief factors of $G$. 

We can now state a remarkable theorem of Jaikin-Zapirain and Pyber \cite[Theorem 1.1]{JP}, which is the key ingredient in the proof of Theorem \ref{c:eps}. 

\begin{thm}\label{t:jp}
There exist absolute constants $\a,\b \in \mathbb{N}$ such that 
$$\a(d(G)+\delta(G)) < \nu(G) < \b d(G) + \delta(G)$$
for any finite group $G$. 
\end{thm}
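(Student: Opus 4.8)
The plan is to analyse $\mathbb{P}_k(G)$ one chief factor at a time, using the Gaschütz--Frattini theory of generation. First I would fix a chief series of $G$ and recall that, after factoring out the Frattini subgroup, the complemented chief factors assemble into \emph{crowns}: for each $G$-isomorphism type $A$ the associated crown is a direct product of ${\rm rk}_A(G)$ copies of $A$ on which $G$ acts. The conditional probability that $k$ random elements generate $G/K$ given that they generate $G/H$ (for consecutive terms $K \leqs H$ of the series) depends only on the crown containing $H/K$, and $\mathbb{P}_k(G)$ is, up to controlled error, the product of these local factors over the crowns. The trivial half of the lower bound is immediate, since $\mathbb{P}_k(G) = 0$ for $k < d(G)$ gives $\nu(G) \geqs d(G)$; the whole game is to extract the extra $\delta(G)$ term and to show that $\nu(G)$ exceeds $d(G)+\delta(G)$ by no more than a bounded factor.

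For the lower bound $\a(d(G)+\delta(G)) < \nu(G)$ it remains to produce the summand $\delta(G)$. I would choose the non-abelian chief factor $A$ realising the maximum in \eqref{e:delt}, set $r = {\rm rk}_A(G)$, and focus on its crown. Projecting onto the roughly $r$ monolithic images with socle $A$, each such image has a maximal subgroup of index $\ell(A)$, and a random $k$-tuple fails to generate that image by landing in the preimage of a fixed such subgroup with probability of order $\ell(A)^{-k}$. These ``coordinate'' failure events across the $r$ projections are close to independent, so a second-moment (or Bonferroni) estimate shows that once $k$ is a small absolute multiple of $\log r / \log\ell(A) = \delta(G)$ at least one coordinate fails with probability exceeding $1 - 1/e$. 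Hence $\mathbb{P}_k(G) < 1/e$ for such $k$, giving $\nu(G) \geqs c\,\delta(G)$, which together with $\nu(G) \geqs d(G)$ yields the lower bound for a suitable absolute $\a$.

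The upper bound $\nu(G) < \b\, d(G) + \delta(G)$ is the hard direction, and here I would bound $1 - \mathbb{P}_k(G)$ by the sum of local failure probabilities over all crowns. The abelian crowns contribute at most an absolute multiple of $d(G)$: generating a direct sum $S^{\oplus s}$ of copies of an irreducible $\mathbb{F}_p[G]$-module is a linear-algebra count whose random threshold lies within a constant of the minimal one, and hence is governed by $d(G)$. For a non-abelian crown of rank $r = {\rm rk}_A(G)$ the failure splits into a coordinate part, bounded by $r\cdot\ell(A)^{-(k-c)}$ coming from the minimal-index maximal subgroups of the monolithic images, and a diagonal part from the maximal subgroups tying two copies of the socle together by an automorphism, bounded by $\binom{r}{2}\,|{\rm Aut}(A)|\,|A|^{-k}$. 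Since $\log\ell(A) \leqs \log|A|$, the coordinate part dominates, and both are small as soon as $k \geqs \delta(G) + c$. Choosing $k = \b\, d(G) + \delta(G)$ therefore makes every crown's failure small, and summing over all crowns --- the sum converging geometrically in $\ell(A)$, as in a zeta-function estimate --- gives $\mathbb{P}_k(G) \geqs 1/e$.

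The main obstacle is the uniform treatment of the non-abelian crowns: one must enumerate and bound the total weight of all product-type and diagonal maximal subgroups of the primitive monolithic groups whose socle is a power of a simple group $S$, with constants independent of $S$. This is exactly where deep input on simple groups enters --- explicit rates of convergence in Dixon's conjecture (Conjecture \ref{c:dixon}), bounds on the number of conjugacy classes of maximal subgroups, and fixed-point-ratio estimates --- to guarantee that the per-crown failure is really governed by $\ell(A)$ and ${\rm rk}_A(G)$ alone. A second delicate point is the bookkeeping: one must cleanly separate the $d(G)$-contribution (the abelian crowns and the minimal-generation threshold) from the $\delta(G)$-contribution (the random-generation threshold at non-abelian crowns), so that the constants $\a,\b$ emerge as absolute and the two families of obstructions do not compound.
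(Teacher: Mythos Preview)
The paper does not prove this theorem at all: it is quoted verbatim as \cite[Theorem 1.1]{JP} (Jaikin-Zapirain and Pyber) and then \emph{applied} in the derivation of Theorems \ref{c:eps} and \ref{t:sg}. There is therefore no ``paper's own proof'' for you to be compared against; the only thing the survey does with Theorem \ref{t:jp} is invoke it as a black box.

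That said, your sketch is a credible outline of the actual Jaikin-Zapirain--Pyber argument: the decomposition of $\mathbb{P}_k(G)$ via the crown-based chief series, the separation of abelian and non-abelian crowns, and the identification of the non-abelian crown realising $\delta(G)$ for the lower bound are all genuine ingredients of \cite{JP}. Where your outline is thinnest is exactly where \cite{JP} is hardest --- the uniform upper bound over all non-abelian crowns --- and you correctly flag that this is where the heavy classification-dependent input (maximal subgroup counts, effective random generation of simple groups) enters. If you want to go further you should consult \cite{JP} directly; nothing in the present survey will help you fill in those details.
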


Let $H$ be a maximal subgroup of an almost simple group, so $d(H) \leqs 6$ by Theorem \ref{t:maxgen}. By considering the structure of $H$ (with the aid of the aforementioned  subgroup structure theorems), it is not too difficult to show that $H$ has at most three non-abelian chief factors (see 
\cite[Lemma 8.2]{BLS1}) and thus $\delta(H)<1$. Therefore, Theorem \ref{t:jp} implies that $\nu(H)<6\b+1$. 

To complete the proof of Theorem \ref{c:eps}, set $c = 6\b+1$ and fix $\e>0$. Let $m$ be the smallest positive integer such that $(1-1/e)^m<\e$ and set $k=cm$. Then 
$$1-\mathbb{P}_{k}(H) \leqs (1-\mathbb{P}_{c}(H))^m \leqs (1-1/e)^m<\e$$
and thus $\mathbb{P}_{k}(H)>1-\e$ as required.

\subsection{Subgroup growth}\label{s:growth}

Let $G$ be a finite group. We define the \emph{depth} of a subgroup $H$ of $G$ to be the maximal length of a chain of subgroups from $H$ to $G$ (with proper inclusions). In particular, $H$ is maximal if and only if it has depth $1$. We say that a subgroup of depth $2$ is a \emph{second maximal} subgroup of $G$, and so on. It will be convenient to introduce the following notation:
\begin{align*}
\mathcal{M}_k(G) & = \{H \,:\, \mbox{$H \leqs G$ has depth $k$}\} \\
m_{k,n}(G) &  = |\{H \in \mathcal{M}_k(G) \,:\, [G:H] = n \}|
\end{align*}
For example, $m_{1,n}(G)$ is the number of maximal subgroup of $G$ with index $n$. 

For a fixed value of $k$, it is interesting to consider the growth of $m_{k,n}(G)$ as a function of $n$. For example, if $\mathcal{G}$ is an infinite family of finite groups (of unbounded order) then we can ask if there is an absolute constant $c$ such that $m_{1,n}(G)< n^c$ for all $n$ and all $G \in \mathcal{G}$. If this condition holds, then we say that the groups in $\mathcal{G}$ have \emph{polynomial maximal subgroup growth}. For example, if $p$ is a prime and $G = (Z_p)^d$ then $m_{1,p}(G) = (p^d-1)/(p-1) \sim p^{d-1}$, so elementary abelian $p$-groups do not have this property. 

This growth condition arises naturally in the study of profinite groups. Recall that a profinite group $G$ is \emph{positively finitely generated} (PFG) if 
$\mathbb{P}_{k}(G)>0$ for some positive integer $k$, where $\mathbb{P}_k(G)$ is defined in terms of topological generation if $G$ is infinite. By a celebrated theorem of Mann and Shalev \cite{MS}, $G$ is PFG  if and only if it has polynomial maximal subgroup growth. 

The next result is a combination of \cite[Corollaries 5 and 6]{BLS1}.

\begin{thm}\label{t:sg}
Almost simple groups have polynomial maximal and second maximal subgroup growth. That is, there exists an absolute constant $c$ such that 
$$\max\{m_{1,n}(G), m_{2,n}(G)\} < n^c$$ 
for all almost simple groups $G$ and all $n$.
\end{thm}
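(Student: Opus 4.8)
The plan is to prove polynomial maximal subgroup growth first and then bootstrap to the second maximal case. For the maximal case, I would reduce to counting conjugacy classes of maximal subgroups of a given index and bounding the sizes of those classes. The key observation is that if $H$ is a maximal subgroup of $G$ of index $n$, then the number of $G$-conjugates of $H$ is $[G:N_G(H)] \leqs [G:H] = n$, so $m_{1,n}(G)$ is at most $n$ times the number of conjugacy classes of maximal subgroups of index $n$. Thus it suffices to show that the number of such classes grows polynomially in $n$. For almost simple groups, this follows from the subgroup structure theorems already invoked repeatedly in the excerpt (the O'Nan--Scott theorem for alternating groups and Aschbacher's theorem, together with the results of Liebeck--Seitz and others, for groups of Lie type): these partition the maximal subgroups into a bounded number of families, each of which is either cheaply enumerable or governed by a bounded list of almost simple candidates, and in each case the number of classes of a fixed index $n$ is bounded by a fixed power of $n$.

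For the second maximal case, I would build directly on the maximal case by peeling off one level of the subgroup lattice. If $K$ is a second maximal subgroup of $G$ with $[G:K]=n$, then $K$ is maximal in some maximal subgroup $H$ of $G$, with $[G:H]\cdot[H:K]=n$. The plan is to sum over the possible intermediate values: for each factorisation $n = n_1 n_2$, the number of choices for $H$ (a maximal subgroup of $G$ of index $n_1$) is at most $m_{1,n_1}(G) < n_1^c$ by the maximal case, and for each such $H$ the number of maximal subgroups $K$ of $H$ of index $n_2$ is at most $m_{1,n_2}(H)$. The crucial input here is Theorem \ref{t:maxgen}: each maximal subgroup $H$ is $6$-generated, and $6$-generated groups have polynomial maximal subgroup growth with a uniform constant. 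Since the number of factorisations $n=n_1 n_2$ is at most $n$, combining these bounds yields $m_{2,n}(G) < n\cdot n_1^c \cdot n_2^{c'} \leqs n^{c''}$ for an absolute constant $c''$.

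The main obstacle I anticipate is establishing, with a \textbf{uniform} constant, that an arbitrary maximal subgroup $H$ of an almost simple group has polynomial maximal subgroup growth, independently of the (unbounded) order of $H$. The bound $d(H)\leqs 6$ from Theorem \ref{t:maxgen} is precisely what rescues this: a theorem of Mann shows that $d$-generated groups have maximal subgroup growth bounded by $n^{c(d)}$ for a constant depending only on $d$, so the boundedness of $d(H)$ delivers a constant uniform across all $H$. Without the generation bound the intermediate subgroups $H$ could in principle have wildly varying subgroup growth, so I expect the real content of the argument to lie in verifying that Theorem \ref{t:maxgen} can indeed be fed into such a uniform growth estimate, and in checking that the family-by-family analysis for the maximal case of Theorem \ref{t:sg} genuinely produces a fixed power of $n$ in every Aschbacher and O'Nan--Scott class, with the almost simple candidates handled via the bound $m(G)$ on minimal indices and the fact that simple groups themselves have polynomial maximal subgroup growth.
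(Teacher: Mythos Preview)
Your layering strategy for $m_{2,n}(G)$ --- summing over factorisations $n = n_1 n_2$ and bounding the number of intermediate maximal subgroups $H$ and then the number of maximal subgroups of each $H$ --- is exactly what the paper does. The gap is in the step where you bound $m_{1,n_2}(H)$ uniformly over all maximal subgroups $H$.

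You claim that ``a theorem of Mann shows that $d$-generated groups have maximal subgroup growth bounded by $n^{c(d)}$ for a constant depending only on $d$''. There is no such theorem, and the statement is false. The relevant general bound is Theorem~\ref{t:lub} (Lubotzky combined with Jaikin-Zapirain--Pyber), which gives $m_{1,n}(H) < n^{\gamma d(H) + \delta(H)}$; the invariant $\delta(H)$ defined in~\eqref{e:delt} is genuinely necessary. To see that $d(H)\leqs 6$ alone cannot suffice, consider $H = T^k$ with $T = {\rm L}_r(q)$ and $k$ chosen maximal subject to $d(T^k)=2$, so $k$ is of order roughly $|T|$. The product-type maximal subgroups of $T^k$ of index $n = (q^r-1)/(q-1)\sim q^{r-1}$ number at least $k\cdot n \sim |T|\cdot n \sim q^{r^2-1}\cdot n$, and since $\log|T|/\log n \sim r+1$, the exponent needed in any bound $m_{1,n}(T^k)<n^c$ tends to infinity with $r$. (In profinite language: the free profinite group on two generators is not PFG, hence does not have polynomial maximal subgroup growth, so no uniform bound over $2$-generated finite groups can exist.)

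What the paper actually does is supplement $d(H)\leqs 6$ with the separate structural fact that every maximal subgroup $H$ of an almost simple group has at most three non-abelian chief factors, whence $\delta(H)<1$. Feeding both bounds into Theorem~\ref{t:lub} gives $m_{1,n}(H) < n^{6\gamma+1}$ uniformly, and then your layering argument goes through to yield $m_{2,n}(G) < n^{6\gamma+2}$. Your proposal correctly anticipated that the ``real content'' lies in obtaining a uniform growth estimate for $H$, but misidentified which ingredient delivers it: it is the chief-factor bound on $\delta(H)$, not the generation bound on $d(H)$, that does the work.

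As a minor point, your treatment of $m_{1,n}(G)$ via conjugacy-class counting and the structure theorems is a different route from the paper's (which simply applies Theorem~\ref{t:lub} with $d(G)\leqs 3$ and $\delta(G)=0$); your route is viable in principle but would require invoking or reproving known bounds on the number of conjugacy classes of maximal subgroups of almost simple groups.
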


To prove this, we need the following result, which combines Theorem \ref{t:jp} with a result of Lubotzky (see \cite[Proposition 1.2]{Lub}).

\begin{thm}\label{t:lub}
There exists an absolute constant $\gamma \in \mathbb{N}$ such that 
$$m_{1,n}(G) < n^{\gamma d(G)+\delta(G)}$$
for all finite groups $G$ and all $n \in \mathbb{N}$.
\end{thm}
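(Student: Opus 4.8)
The plan is to route the argument through the invariant $\nu(G)=\min\{k\in\mathbb{N} : \mathbb{P}_k(G)\ge 1/e\}$, which is finite for every finite group since $\mathbb{P}_k(G)\to 1$ as $k\to\infty$. I would prove the theorem in two stages: first bound $m_{1,n}(G)$ from above by a power of $n$ with exponent $\nu(G)$, and then bound $\nu(G)$ linearly in $d(G)$ and $\delta(G)$ using the Jaikin-Zapirain--Pyber estimate of Theorem \ref{t:jp}. The point of using $\nu(G)$ as the bridge is that it is the single quantity that simultaneously controls the number of maximal subgroups of a given index (via a probabilistic covering argument) and is pinned down by the structural invariants $d(G)$ and $\delta(G)$.

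For the first stage I would invoke \cite[Proposition 1.2]{Lub}, which supplies the key inequality $m_{1,n}(G)\le n^{\nu(G)}$ for all $n$. The underlying idea is that each maximal subgroup of index $n$ contains $k$ uniformly random elements with probability exactly $n^{-k}$; a careful counting over the subgroup poset (rather than a naive union bound, which fails because distinct maximal subgroups share many tuples) shows that if $m_{1,n}(G)$ exceeded $n^{k}$, then $k$ random elements would fail to generate $G$ with probability too large for $\mathbb{P}_k(G)\ge 1/e$ to hold. Taking $k=\nu(G)$ yields the stated bound. I would treat this counting result as a black box, since it is the genuinely deep external input.

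For the second stage I would apply the upper bound of Theorem \ref{t:jp}, namely $\nu(G)<\b\, d(G)+\delta(G)$ with $\b\in\mathbb{N}$ an absolute constant. Setting $\gamma=\b$ and combining the two stages gives, for every $n\ge 2$,
\[
m_{1,n}(G)\le n^{\nu(G)}<n^{\b d(G)+\delta(G)}=n^{\gamma d(G)+\delta(G)},
\]
where the middle strict inequality uses that $t\mapsto n^{t}$ is strictly increasing for $n\ge 2$. The case $n=1$ is trivial, since a proper subgroup has index at least $2$ and hence $m_{1,1}(G)=0<1$. This establishes the theorem with $\gamma=\b$.

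The main obstacle is not the assembly, which is short, but ensuring that the coefficient of $\delta(G)$ in the conclusion comes out exactly $1$. This is forced by two facts acting together: Lubotzky's bound must have exponent precisely $\nu(G)$ rather than a constant multiple of it, and the upper bound in Theorem \ref{t:jp} must carry coefficient exactly $1$ on $\delta(G)$; any slack in either input would inflate the $\delta(G)$-coefficient above $1$. Thus the real content of the theorem lies in the two cited results, and the delicate point in writing the proof is to quote them in the sharp forms that make the constants line up.
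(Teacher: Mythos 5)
You have reconstructed essentially the paper's own argument: Theorem \ref{t:lub} is obtained there precisely by combining \cite[Proposition 1.2]{Lub} with the upper bound $\nu(G)<\beta d(G)+\delta(G)$ of Theorem \ref{t:jp}, using $\nu(G)$ as the bridge exactly as you propose. The one inaccuracy is in your closing paragraph: Lubotzky's proposition in fact bounds $m_{1,n}(G)$ by $n^{\nu(G)+c}$ for a small absolute \emph{additive} constant $c$ in the exponent (roughly $3.5$), rather than by $n^{\nu(G)}$ on the nose; but, contrary to your claim that the proof forces the exponent to be precisely $\nu(G)$, this slack is harmless. Any group possessing a maximal subgroup satisfies $d(G)\geqs 1$, so one simply takes $\gamma=\beta+\lceil c\rceil$ and absorbs the additive constant into the $\gamma d(G)$ term; what would genuinely inflate the coefficient of $\delta(G)$ above $1$ is only a \emph{multiplicative} loss, i.e.\ a bound of the shape $m_{1,n}(G)\leqs n^{C\nu(G)}$ with $C>1$, or a coefficient exceeding $1$ on $\delta(G)$ in Theorem \ref{t:jp}. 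With that adjustment, your two-stage assembly (including the trivial case $n=1$) is exactly the proof intended in the paper.
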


Let $G$ be an almost simple group. Then $d(G) \leqs 3$ by Theorem \ref{t:dl} and it is easy to see that $\delta(G) = 0$ (see \eqref{e:delt}), so Theorem \ref{t:lub} yields 
$$m_{1,n}(G) < n^{3\gamma}.$$
As previously noted, $\delta(H) < 1$ for all $H \in \mathcal{M}_{1}(G)$, so 
\begin{align*}
m_{2,n}(G) & \leqs \sum_{a|n}m_{1,a}(G) \max\{m_{1,n/a}(H) \,:\, H \in \mathcal{M}_{1}(G),\, [G:H] = a\} \\
& < \sum_{a|n}a^{3\gamma}(n/a)^{6\gamma+1} < n^{6\gamma+2}
\end{align*}
and thus the bound in Theorem \ref{t:sg} holds with $c = 6\gamma+2$. 

The constant $\gamma$ here can be expressed in terms of the undetermined constant $\b$ in Theorem \ref{t:jp}. It would be desirable to have an effective result with an explicit constant, but it seems rather difficult to extract constants from the proof of Theorem \ref{t:jp} in \cite{JP}.

\subsection{Primitive permutation groups}\label{s:prim}

Let $G \leqs {\rm Sym}(\Omega)$ be a finite primitive permutation group with point stabiliser $H = G_{\a}$. Let us consider the relationship between $d(G)$ and $d(H)$. By primitivity, $H$ is a maximal subgroup of $G$ and thus $d(H) \geqs d(G)-1$. On the other hand, Lemma \ref{l:ns} yields
\begin{equation}\label{e:dh}
d(H) \leqs [G:H](d(G)-1)+1
\end{equation}
and we have observed that equality is possible, even when $H$ is a maximal subgroup of $G$ (see Example \ref{ex:max}). Of course, if $G$ and $H$ are the groups in Example \ref{ex:max} then $G$ does not act faithfully on the cosets of $H$ (indeed, $H$ is a normal subgroup of $G$), so this example does not come from a primitive group. Therefore, we may ask if it is possible to improve the bound in \eqref{e:dh} if we assume $H$ is a \emph{core-free} maximal subgroup. The following result is \cite[Theorem 7]{BLS1}.

\begin{thm}\label{t:prim}
Let $G$ be a finite primitive permutation group with point stabiliser 
$H$. Then 
$$d(G)-1 \leqs d(H) \leqs d(G)+4.$$
\end{thm}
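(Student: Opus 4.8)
The plan is to prove the two inequalities separately, with the lower bound being essentially immediate and the upper bound requiring the real work. For the lower bound $d(H) \geqs d(G)-1$, I note that since $G$ is primitive, the point stabiliser $H$ is a maximal subgroup of $G$. If $H = \la h_1, \ldots, h_r \ra$ with $r = d(H)$, then $G = \la H, g \ra$ for any $g \in G \setminus H$ (by maximality), so $G$ is generated by $r+1$ elements, giving $d(G) \leqs d(H)+1$ as required. This part needs only maximality and is already noted in the text preceding the statement.

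For the upper bound $d(H) \leqs d(G)+4$, the strategy is to exploit the structure of primitive groups via the O'Nan--Scott theorem (Theorem \ref{t:ons}), treating the abstract primitive group as acting on $\Omega$ with core-free maximal point stabiliser $H$. The key point is that the naive bound from Lemma \ref{l:ns}, namely $d(H) \leqs [G:H](d(G)-1)+1$, is far too weak because the index $[G:H]$ is unbounded; the improvement must come from using that $H$ is \emph{core-free} and maximal, so $G$ embeds in a symmetric group and $H$ controls the primitive action. I would split into the O'Nan--Scott classes. The affine case, where $G = V{:}G_0$ with $V$ elementary abelian and $G_0 \leqs {\rm GL}(V)$ irreducible, is the crucial and hardest one: here $H = G_0$ is a linear group, and one must bound $d(G_0)$ in terms of $d(G)$. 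For the almost simple, diagonal, product, and twisted wreath types, the socle is a product of non-abelian simple groups and one can bound the number of generators using results like Theorem \ref{t:dl} and the Jaikin-Zapirain--Pyber machinery (Theorem \ref{t:jp}), controlling both $d$ and $\delta$ for $H$.

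The main obstacle I expect is the affine case. When $G = V{:}G_0$ with $V = (\mathbb{F}_p)^n$ and $G_0 \leqs {\rm GL}_n(p)$ acting irreducibly, we have $d(G) = \max\{2, d(G_0)\}$ by a Lucchini-type argument (since $V$ is the unique minimal normal subgroup and the action is irreducible, as in the proof of Proposition \ref{p:an}), so in fact $d(G_0) \leqs d(G)$ here and the bound is easy. The genuinely delicate situation is when $H = G_\alpha$ is itself not the full complement but, in the product or diagonal cases, a large subgroup whose chief factors must be carefully counted. The technical heart is therefore to show that $\delta(H)$ is bounded (by arguing $H$ has boundedly many non-abelian chief factors, analogous to \cite[Lemma 8.2]{BLS1}) and that $d(H)$ exceeds $d(G)$ by a controlled amount in each O'Nan--Scott class. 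Combining $d(G)-1 \leqs d(H)$ with a case-by-case upper bound and optimising the constant across all classes should yield the additive gap of $4$; verifying that no class forces a larger gap, and pinning down exactly where the extra $+4$ is attained, is where the careful bookkeeping lies.
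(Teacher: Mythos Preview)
Your overall architecture is right: lower bound by maximality, upper bound by an O'Nan--Scott case split. But two things go wrong in the execution.

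First, you miss the key input that drives the constant $4$. In the almost simple case the paper does not use Theorem~\ref{t:dl} or the Jaikin-Zapirain--Pyber machinery; it simply invokes Theorem~\ref{t:maxgen}, which gives $d(H) \leqs 6$ for any maximal subgroup $H$ of an almost simple group. Since $d(G) \geqs 2$, this yields $d(H) \leqs d(G)+4$ immediately, and this is exactly where the $+4$ originates. You never cite Theorem~\ref{t:maxgen}, so the almost simple case is not actually handled in your sketch. Second, Theorem~\ref{t:jp} is the wrong tool: it bounds $\nu(G)$ in terms of $d(G)$ and $\delta(G)$, not $d(H)$ in terms of $d(G)$. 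Showing that $\delta(H)$ is bounded tells you nothing about $d(H)$ on its own; your proposed ``technical heart'' is circular, since ``$d(H)$ exceeds $d(G)$ by a controlled amount in each class'' is precisely the conclusion you are trying to establish.

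On the easy cases you also have the picture slightly inverted. The affine and twisted wreath types are both dispatched by the same one-line observation: $G$ has a regular normal subgroup $N$, whence $G = HN$ with $H \cap N = 1$, so $H \cong G/N$ and $d(H) = d(G/N) \leqs d(G)$. Your Lucchini-type argument for the affine case is equivalent to this, but you wrongly group twisted wreath with the harder cases and initially label the affine case ``crucial and hardest'' before resolving it trivially. The genuinely nontrivial cases are diagonal and product type, for which the paper refers to \cite[Section 10]{BLS1}; these require direct structural analysis of $H$, not the $\delta$-machinery.
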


The short proof combines Theorem \ref{t:maxgen} and the O'Nan--Scott theorem (for primitive groups), which describes the structure and action of a primitive permutation group in terms of its socle. Some cases are very easy. For example, if $G$ is an affine group or a twisted wreath product, then $G$ has a regular normal subgroup $N$, hence $G = HN$, $H \cap N = 1$ and $d(H) = d(G/N) \leqs d(G)$. If $G$ is almost simple, then $d(H) \leqs 6$ by Theorem \ref{t:maxgen}, so $d(H) \leqs d(G)+4$. We refer the reader to \cite[Section 10]{BLS1} for the remaining diagonal and product-type cases. 

It would be interesting to know if there are any examples in Theorem \ref{t:prim} with $d(H)=d(G)+4$. Note that one would need to prove Conjecture \ref{c:as} in order to rule out any almost simple examples. We refer the reader to \cite{CLRD} for a recent application of Theorem \ref{t:prim} to the study of the exchange relation for generating sets of arbitrary finite groups.

\subsection{Second maximal subgroups and beyond}\label{s:2max}

Let $G$ be a finite group and recall that $M \leqs G$ is a \emph{second maximal} subgroup of $G$ if it has depth 2, that is, if $M$ is maximal in a maximal subgroup of $G$. These subgroups and their overgroups arise naturally in the study of subgroup lattice theory (see P\'{a}lfy \cite{Pal} and Aschbacher \cite{asch2}, for example). 

In this final section, our goal is to extend some of the results discussed in the previous section on maximal subgroups of almost simple groups to second maximal subgroups. For example, we have seen that almost simple groups and their maximal subgroups are $3$-generated and $6$-generated, respectively, so it is natural to ask whether or not this behaviour extends deeper into the subgroup lattice. This question, plus several related problems, is studied in \cite{BLS2} and we will provide a brief overview of the main results.

First let us fix some notation. Let $G$ be an almost simple group and let $M$ be a second maximal subgroup of $G$, so 
$$M<H<G$$
for some maximal subgroup $H$ of $G$. The following lemma provides an important reduction.

\begin{lemma}\label{l:easy}
If either $H$ or $M$ is almost simple, or if ${\rm core}_H(M) =1$, then $d(M) \leqs 10$.
\end{lemma}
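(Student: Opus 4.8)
The plan is to dispose of the lemma by a three-way case analysis matching the three hypotheses, reducing each case to a result already established earlier in the article so that no new structural work is required.

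First suppose $H$ is almost simple. Then $M$ is a maximal subgroup of the almost simple group $H$, so Theorem \ref{t:maxgen}, applied with $H$ in the role of the ambient almost simple group, immediately gives $d(M) \leqs 6$. Next suppose instead that $M$ is almost simple; then Theorem \ref{t:dl} yields $d(M) \leqs 3$. In both of these cases the desired bound $d(M) \leqs 10$ holds with considerable room to spare, so neither contributes the extremal constant.

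The remaining case, and the one that forces the constant $10$, is ${\rm core}_H(M) = 1$. Here $M$ is a core-free maximal subgroup of $H$, so the action of $H$ on the coset space $H/M$ is both faithful and primitive; that is, $H$ is a finite primitive permutation group with point stabiliser $M$. I would then apply Theorem \ref{t:prim} with $H$ playing the role of the permutation group to obtain $d(M) \leqs d(H) + 4$, and combine this with the bound $d(H) \leqs 6$ supplied by Theorem \ref{t:maxgen} (recall that $H$ is itself a maximal subgroup of the almost simple group $G$) to conclude $d(M) \leqs 10$.

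There is no serious computational obstacle in this argument: its entire content lies in recognising that each hypothesis licenses exactly one of the earlier bounds, and the only point needing a moment's care is the verification that core-freeness together with maximality makes the pair $(H,M)$ a genuine primitive permutation group, so that Theorem \ref{t:prim} is applicable. The real difficulty is deliberately deferred to the cases this lemma excludes — namely when $H$ and $M$ are both non-almost-simple and $M$ has nontrivial core in $H$ — which will presumably demand a direct structural analysis of the second maximal subgroups rather than an appeal to a single prior theorem.
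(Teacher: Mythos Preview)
Your argument is correct and matches the paper's proof essentially line for line: the paper likewise invokes Theorems~\ref{t:dl} and~\ref{t:maxgen} to get $d(M)\leqs 6$ when $H$ or $M$ is almost simple, and then combines Theorems~\ref{t:prim} and~\ref{t:maxgen} exactly as you do to obtain $d(M)\leqs d(H)+4\leqs 10$ in the core-free case. The only cosmetic difference is that the paper treats the two almost simple cases in a single sentence rather than separating them.
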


\begin{proof}
If either $H$ or $M$ is almost simple, then $d(M) \leqs 6$ by Theorems \ref{t:dl} and \ref{t:maxgen}. If ${\rm core}_H(M) =1$ then $H$ is a primitive permutation group on the set of cosets $M$ in $H$, so 
$$d(M)\leqs d(H)+4 \leqs 10$$
by combining Theorems \ref{t:maxgen} and \ref{t:prim}.
\end{proof}

If our goal is to investigate the existence of an upper bound of the form $d(M) \leqs c$ for some absolute constant $c$, then by the previous lemma we may assume that $M$ contains a non-trivial normal subgroup of $H$. When viewed in terms of the usual subgroup structure theorems for almost simple groups, this condition on $M$ is rather restrictive. Let us illustrate this with a concrete example.

\begin{ex}
Suppose $M<H<G$, where $G=S_n$ and $H = S_k \wr S_t = N.S_t$ with $N=(S_k)^t$ and $k \geqs 5$ (so $n = kt$ or $k^t$). Note that $A = (A_k)^t$ is the unique minimal normal subgroup of $H$. In particular, if $A \not\leqs M$ then ${\rm core}_H(M)=1$ and thus $d(M) \leqs 10$ by Lemma \ref{l:easy}, so let us assume $A \leqs M$. There are two cases to consider.
\begin{itemize}
\item[(i)] $N \leqs M$: Here the maximality of $M$ in $H$ implies that $M = N.J$ for some maximal subgroup $J<S_t$. Now $J$ has $\ell \leqs 2$ orbits on $\{1, \ldots, t\}$ (indeed, if $J$ is intransitive, then $J = S_a \times S_{t-a}$ for some $a$) and Proposition \ref{p:an} implies that $d(J) \leqs 4$. Therefore,  
$$d(M) \leqs d((S_k)^{\ell})+d(J) \leqs 6.$$

\item[(ii)] $N \not\leqs M$: In this situation, $M = (M \cap N).S_t$ and thus $M/A$ is a maximal subgroup of the wreath product $H/A = S_2 \wr S_t$. One can show that every maximal subgroup of $S_2 \wr S_t$ is $6$-generated (see \cite[Lemma 2.7]{BLS2}) and thus 
$$d(M) \leqs d(A_k)+6 = 8.$$
\end{itemize}
We conclude that $d(M) \leqs 10$.
\end{ex}

The main result on the minimal generation of second maximal subgroups is the following (see \cite[Theorem 1]{BLS2}).

\begin{thm}\label{t:2max}
There is an absolute constant $c$ such that $d(M) \leqs c$ for all 
second maximal subgroups $M$ of almost simple groups $G$ with 
\begin{equation}\label{e:spec}
{\rm soc}(G) \not\in \{{\rm L}_{2}(q), {}^2B_2(q), {}^2G_2(q)\}.
\end{equation}
\end{thm}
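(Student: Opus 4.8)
The plan is to build on the reduction already in place. By Lemma \ref{l:easy} we may assume that neither $H$ nor $M$ is almost simple and that $N := {\rm core}_H(M) \neq 1$, so $N$ is a nontrivial normal subgroup of $H$ with $N \leqs M$. The first observation is that the quotient behaves well: since $N$ is the core of $M$ in $H$, the subgroup $M/N$ is a core-free maximal subgroup of $H/N$, and hence $H/N$ acts faithfully and primitively on the cosets of $M/N$. Applying Theorem \ref{t:prim} to this primitive action, and then Theorem \ref{t:maxgen} to the maximal subgroup $H$, we obtain
$$d(M/N) \leqs d(H/N) + 4 \leqs d(H) + 4 \leqs 10.$$
Thus all the difficulty is concentrated in climbing from $M/N$ back up to $M$, that is, in the normal subgroup $N$.

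For this I would invoke the standard inequality $d(M) \leqs d(M/N) + d_M(N)$, where $d_M(N)$ denotes the minimal number of elements needed to generate $N$ as a group with operators $M$ (under conjugation); when $N$ is abelian this is exactly the number of $\F$-module generators of $N$, and in general it is controlled by the chief factors of $M$ lying in $N$ via the theory of crown-based powers (Gasch\"utz, Lucchini). The problem therefore reduces to bounding $d_M(N)$ by an absolute constant. The non-abelian chief factors cause no trouble: a normal section of $N$ isomorphic to $A^r$ for a non-abelian simple group $A$ is generated as an $M$-group by a bounded number of elements, because in every relevant configuration $M$ permutes the $r$ factors with a bounded number of orbits (transitively, in the wreath-type cases), and then two generators placed in a single coordinate already suffice to recover $A^r$ under the $M$-action, independently of $r$.

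The heart of the proof, and the main obstacle, is to bound the contribution of the abelian chief factors of $N$. Here I would run through the families of maximal subgroups $H$ of $G$ supplied by the relevant subgroup structure theorems --- the O'Nan--Scott theorem for alternating and symmetric socles (Theorem \ref{t:ons}), Aschbacher's theorem \cite{asch} together with \cite{KL, BHR} for classical socles, and the results of Liebeck--Seitz \cite{LS03} for exceptional socles --- and in each case identify the possible normal subgroups $N \normal H$ and analyse $N$ as a module for $M$. The abelian chief factors arise from unipotent radicals of parabolic subgroups, from central tori, and from permutation modules in imprimitive subgroups; for each such irreducible factor $V$ one must bound the number of copies of $V$ occurring in $N$ relative to $\dim V$, since this ratio governs $d_M(N)$. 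The key point is that outside the excluded families this ratio stays bounded: the relevant modules grow in dimension with the rank or the field, and their multiplicities are constrained by the ambient geometry.

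Finally, I would explain why the excluded socles are genuinely exceptional rather than a technical artefact, which also pins down where the above estimates break. For $T = {\rm L}_{2}(q)$, $^2B_2(q)$ or $^2G_2(q)$ the subgroup $H$ can be (the normaliser of) a Borel subgroup, whose unipotent part $N$ is a $p$-group carrying the action of a cyclic torus and having an elementary abelian chief factor $\cong \F$. As a module for this cyclic torus, $\F$ may decompose into many isotypic components of small dimension, with the precise decomposition --- and hence $d_M(N)$ --- governed by the factorisation of $q \pm 1$ and the multiplicative order of the characteristic modulo the relevant divisors. This is exactly the arithmetic obstruction that prevents an absolute bound in these rank-one cases, and it is why condition \eqref{e:spec} is imposed. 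In all remaining cases the bound on $d_M(N)$ combines with $d(M/N) \leqs 10$ to give $d(M) \leqs c$ for an absolute constant $c$, completing the proof of Theorem \ref{t:2max}.
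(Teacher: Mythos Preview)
Your framework --- reduce via Lemma~\ref{l:easy}, then control $d(M)$ through the quotient $M/N$ and the normal subgroup $N={\rm core}_H(M)$ --- matches the paper's sketch, and applying Theorem~\ref{t:prim} to the faithful primitive action of $H/N$ on the cosets of $M/N$ to obtain $d(M/N)\leqs 10$ is an elegant unifying step. The paper's illustrative Example for $H=S_k\wr S_t$ and the Remark on exceptional parabolics take a more hands-on route, bounding $d(M)$ directly from the explicit structure of $M$ in each Aschbacher/O'Nan--Scott class.

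There is, however, a real gap in your argument as written. The inequality $d(M)\leqs d(M/N)+d_M(N)$ you call ``standard'' is false in general: take $M=N=A_4$, where any $3$-cycle normally generates $A_4$ so $d_M(N)=1$, yet $d(M)=2>0+1$. What one actually uses are Frattini-type variants --- for instance, when $N$ is a $p$-group (so $N'\leqs\Phi(N)$), lifts of generators of $M/N$ together with $M$-module generators of $N/N'$ really do generate $M$ --- and this is how the paper handles the parabolic case. More seriously, your assertion that the multiplicities of irreducible $M$-constituents in the abelian sections of $N$ ``stay bounded'' is precisely the information that is \emph{not} available: the Remark following the theorem shows that for $G=E_8(q)$ with $H$ a $D_7$-parabolic and $M=QK_0Z$, the proof in \cite{BLS2} falls back on the crude bound $\dim_{\mathbb{F}_q}Q/Q'=64$, yielding $c=70$, and explicitly notes that sharpening this would require the restriction of the $64$-dimensional spin module to every maximal subgroup of ${\rm Spin}_{14}^{+}(q)$, which is not in the literature. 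So the actual proof proceeds by type-by-type structural estimates (giving $c=12$ generically, $c=70$ for exceptional parabolics) rather than by the uniform module-theoretic route you outline. Your account of why the rank-one socles in \eqref{e:spec} are genuinely exceptional is accurate.
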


\begin{re}
As noted in \cite{BLS2}, we can take $c=12$ for the constant in Theorem \ref{t:2max}, unless ${\rm soc}(G)$ is a simple exceptional group of Lie type and $M$ is maximal in a parabolic subgroup of $G$, in which case the conclusion holds with $c=70$. No doubt these estimates for $c$ could be improved. For example, suppose $G = E_8(q)$ and $M$ is a maximal subgroup of a $D_7$-parabolic subgroup $H = QL$ of $G$, where $Q$ is the unipotent radical and $L$ is a Levi factor of $H$. If $M$ contains $Q$, then $M = QK_0Z$ where $K_0$ is a maximal subgroup of ${\rm Spin}_{14}^{+}(q)$, $Z$ is a central torus of rank $1$ and $Q/Q'$ is a $64$-dimensional spin module for ${\rm Spin}_{14}^{+}(q)$. Since $Q'$ is contained in the Frattini subgroup of $Q$, Theorem \ref{t:maxgen} implies that 
$$d(M) \leqs d(Q/Q') + d(K_0Z) \leqs 64+6 = 70.$$
To improve this estimate, one would have to study the action of each maximal subgroup of ${\rm Spin}_{14}^{+}(q)$ on the spin module $Q/Q'$ (a complete list of the relevant maximal subgroups is not available in the literature).   
\end{re}

Let us look more closely at the groups $G$ excluded in \eqref{e:spec}. First observe that in each case, $G$ has a maximal Borel subgroup $B$. If $M$ is a second maximal subgroup and $M \not\leqs B$, then \cite[Theorem 1]{BLS2} states that $d(M) \leqs c$, where $c$ is the constant in the main statement of Theorem \ref{t:2max}. However, it is not too difficult to see that certain maximal subgroups of $B$ have radically different generation properties. 

\begin{ex}
Suppose $G = {\rm L}_{2}(q)$ with $q=p^f$, so $B = (Z_p)^f{:}Z_{q-1}$. If $q-1$ is a prime, then $M = (Z_p)^f$ is a second maximal subgroup of $G$ with $d(M)=f$. In particular, if we take $q - 1$ to be the largest known Mersenne prime, so $q=2^{74207281}$ at the time of writing, then $M = (Z_2)^{74207281}$ is a second maximal subgroup of ${\rm L}_{2}(q)$ with 
$d(M) = 74207281$.
\end{ex}

This example shows that if there are infinitely many Mersenne primes, which is widely believed to be true, then there is no absolute bound on the number of generators for second maximal subgroups of the groups excluded in \eqref{e:spec}. In fact, the following result shows that this question is equivalent to a formidable open problem in Number Theory (although this is weaker than the existence of infinitely many Mersenne primes, a proof is still far out of reach).

\begin{thm}
There is an absolute constant $c$ such that all second maximal subgroups of almost simple groups are $c$-generated if and only if 
$$\{r \,:\, \mbox{$r$ prime and $(q^r-1)/(q-1)$ is prime for some prime power $q$}\}$$
is finite.
\end{thm}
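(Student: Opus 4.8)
The plan is to reduce the statement to a quantitative question about the three exceptional families and then to match that question to the repunit condition. By Theorem~\ref{t:2max} there is an absolute constant $c_0$ bounding $d(M)$ for every second maximal subgroup $M$ of an almost simple group whose socle does not lie in $\{{\rm L}_2(q), {}^2B_2(q), {}^2G_2(q)\}$. Hence an absolute bound on $d(M)$ exists for all almost simple groups if and only if it exists for those with socle in this set, and I would restrict attention to these from the outset. For such a group $G$ one uses the maximal Borel subgroup $B$: as recorded after Theorem~\ref{t:2max}, every second maximal subgroup $M$ with $M \not\leqslant B$ is already $c_0$-generated, so the entire problem collapses to controlling $d(M)$ for subgroups $M$ that are maximal in $B$.

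The heart of the argument is the claim that $\sup d(M)$, taken over all such $M$, is infinite precisely when the set in the statement, say $S$, is infinite. Writing $B = U{:}C$ with $U$ the unipotent radical and $C$ the relevant cyclic complement (a maximal torus, possibly extended by field and diagonal automorphisms in the almost simple case), I would classify the maximal subgroups of $B$ family by family. A torus acts with no nonzero fixed points on the natural sections of $U$, so every candidate $M$ is $O(1)$-generated except when it is a full elementary abelian unipotent-type section $V \cong (Z_p)^m$, and such a $V$ is maximal in $B$ exactly when the cyclic quotient acting on it is forced to be of prime order. A direct computation of this order in each family shows that it equals $(q^r-1)/(q-1)$ for a prime power $q$ and a prime $r$ tracking the dimension $m = d(V)$, thereby recovering and generalising the Mersenne example $M = (Z_2)^f$ given earlier. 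For the \emph{forward} implication ($S$ infinite $\Rightarrow$ no bound) I would, for each $r \in S$, fix a witnessing $q$ and exhibit the corresponding $V$ as a genuine second maximal subgroup with $d(V)$ growing with $r$, so that no absolute $c$ can work. For the \emph{converse}, if $S$ is finite then the prime-order condition can hold for only boundedly many $r$; this bounds $d(V)$, and the classification of maximal subgroups of $B$ then bounds $d(M)$ uniformly, which together with $c_0$ yields the absolute constant $c$.

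The main obstacle is establishing the exact correspondence in the middle step: one must prove, separately for ${\rm L}_2(q)$, ${}^2B_2(q)$ and ${}^2G_2(q)$ together with their almost simple extensions, that the only second maximal subgroups of unbounded generation number are these unipotent-type sections, and that maximality in $B$ is equivalent to the primality of a cyclic quotient of order precisely $(q^r-1)/(q-1)$. This requires a careful analysis of the module structure of $U$ under the combined action of the torus and the field and diagonal automorphisms, since it is this interplay—balancing the fixed-point-freeness of the torus against the large fixed spaces of field automorphisms—that allows an \emph{arbitrary} prime power $q$ to appear in the repunit rather than only $q \in \{2,3\}$. Once this dictionary between $d(M)$ and $S$ is in place, both implications follow formally from the finiteness or infinitude of $S$.
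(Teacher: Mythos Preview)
The paper does not prove this theorem; it is quoted from \cite{BLS2} and no argument is given in the text. So there is nothing to compare against directly, and I assess your proposal on its own.

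Your reduction is correct: Theorem~\ref{t:2max} disposes of all socles outside $\{{\rm L}_2(q),{}^2B_2(q),{}^2G_2(q)\}$, and the remark after it reduces the remaining cases to maximal subgroups of the Borel $B=U{:}C$. Classifying those and matching them to repunit primes is indeed the whole content of the theorem.

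The gap is in your identification of the critical subgroups. You claim that the only $M$ with unbounded $d(M)$ are elementary abelian $V\cong(Z_p)^m$, maximal in $B$ when $B/V$ has prime order, and that an arbitrary prime power $q$ in the repunit is produced by field automorphisms. But the only elementary abelian normal $p$-subgroup that can be maximal in $B$ is $U$ itself, and in ${\rm PGL}_2(p^f)$ the condition ``$|B/U|$ prime'' forces $p^f-1$ prime, hence $p=2$: you only recover the Mersenne case $q=2$. Field automorphisms do not repair this.

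The actual mechanism is torus-theoretic, not automorphism-theoretic. In $G={\rm PGL}_2(q^r)$ with $r$ prime, set $M=U\rtimes D$ with $D=\mathbb{F}_q^{\times}\leqs C=\mathbb{F}_{q^r}^{\times}$. Then $[B:M]=(q^r-1)/(q-1)$, so $M$ is maximal in $B$ exactly when this repunit is prime; and since $\mathbb{F}_p[D]=\mathbb{F}_q$, the module $U$ splits as $r$ isomorphic $D$-irreducibles, giving $d(M)\geqs r$. This is how a general prime power $q$ enters. Conversely, if $D\leqs C$ has prime index $\ell$ and $U$ is not $D$-irreducible, one checks that $D=\mathbb{F}_{q_0}^{\times}$ for a subfield with $\ell=(q_0^r-1)/(q_0-1)$ and $r$ prime, forcing $r\in S$; otherwise $d(M)\leqs 2$. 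With this corrected dictionary (and the analogous analysis for the non-abelian $U$ in the Suzuki and Ree Borels, and for the almost simple extensions) both implications go through exactly as you outline.
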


To finish, let us say a few words on subgroups that lie deeper in the subgroup lattice of an almost simple group $G$. Firstly, by essentially repeating the argument in the proof of Theorem \ref{t:sg}, it is easy to show that 
$$m_{3,n}(G) < n^{70\gamma +2}$$
for all $n$ (where $\gamma$ is the constant in Theorem \ref{t:lub}), assuming the condition on ${\rm soc}(G)$ in \eqref{e:spec} is satisfied. In fact, by carefully studying the excluded groups in \eqref{e:spec}, it is possible to show that \emph{all} almost simple groups have polynomial third maximal subgroup growth (see \cite[Theorem 6]{BLS2}). Does this growth behaviour extend to fourth maximal subgroups and beyond?

\begin{prob}
For all $k \in \mathbb{N}$, is there a constant $c = c(k) \in \mathbb{N}$ such that $m_{k,n}(G)< n^c$ for all $n$ and all almost simple groups $G$?
\end{prob}

Returning to minimal generation, it is not too difficult to construct third maximal subgroups of almost simple groups that need arbitrarily many generators (without needing to establish any formidably difficult results in Number Theory!). 

\begin{ex}
Let $p \geqs 5$ be a prime such that $p\equiv \pm 3 \pmod{8}$. Note that infinitely many primes satisfy this congruence condition, by Dirichlet's theorem. The condition on $p$ implies that $S_4$ is a maximal subgroup of ${\rm PGL}_{2}(p)$ and this allows us to construct a third maximal subgroup of $S_{2(p+1)}$ via the following chain: 
$$S_{2(p+1)}>S_2 \wr S_{p+1} >(S_2)^{p+1}.{\rm PGL}_2(p)>(S_2)^{p+1}.S_4 = H.$$
By applying Lemma \ref{l:ns} we conclude that $d(H) \geqs p/24+1$.
\end{ex}

It would be interesting to seek an appropriate extension of Theorem \ref{t:2max} for third maximal subgroups $H$ (and more generally, depth $k$ subgroups) of almost simple groups.

\end{document}